\numberwithin{equation}{section}
\newtheorem{thm}{Theorem}[section]
\newtheorem*{thm*}{Theorem}
\newtheorem{lem}[thm]{Lemma}
\newtheorem{prop}[thm]{Proposition}
\newtheorem{cor}[thm]{Corollary}
\theoremstyle{definition}\newtheorem{defn}[thm]{Definition}
\newtheorem{eg}[thm]{Example}
\newtheorem*{rmk}{Remark}
\newcommand{\HH}{{\mathcal H}}
\newcommand{\HHH}{{\mathbb H}}
\newcommand{\LL}{{\mathcal L}}
\newcommand{\KK}{{\mathcal K}}
\newcommand{\FF}{{\mathcal F}}
\newcommand{\RR}{{\mathbb R}}
\newcommand{\SSS}{{\mathbb S}}
\newcommand{\CC}{{\mathbb C}}
\newcommand{\NN}{{\mathbb N}}
\newcommand{\TT}{{\mathcal T}}
\newcommand{\ZZ}{{\mathbb Z}}
\newcommand{\frakA}{\mathfrak{A}}
\newcommand{\ps}{\text{PS}}
\newcommand{\Cr}{\text{wcr}}
\newcommand\crossing[5]{
	\begin{tikzpicture}
		\foreach \x in {#1,...,#2}
		\coordinate (\x) at (\x,0) node [below] at (\x) {\x};
		\foreach \x/\y [count=\n] in {#3}
		{   \draw (\x)--(\x,\n);
			\draw (\y)--(\y, \n);
			\draw (\x,\n)--(\y, \n);}
		\foreach \x in {#4}
		\draw (\x)--(\x, #5);
	\end{tikzpicture}
}
\begin{document}
	\title{A Conjugate System for Twisted Araki-Woods von Neumann Algebras of finite dimensional spaces}
	\author{Zhiyuan Yang\footnote{ 
					 Texas A\&M University, zhiyuanyang@tamu.edu}}
	\maketitle
	
\begin{abstract}
	We compute the conjugate system of twisted Araki-Woods von Neumann algebras $ \mathcal{L}_T(H) $ introduced in \cite{dL22} for a compatible braided crossing symmetric twist $T$ on a finite dimensional Hilbert space $ \HH $ with norm $ \|T\| <1$. This implies that those algebras have finite non-microstates free Fisher information and therefore are always factors of type $\text{III}_\lambda$ ($0<\lambda\leq 1$) or $ \text{II}_1 $. Moreover, using the (nontracial) free monotone transport \cite{nelson2015free}, we show that $ \mathcal{L}_T(H) $ is isomorphic to the free Araki-Woods algebra $ \mathcal{L}_0(H) $ when $ \|T\|=q $ is small enough.
\end{abstract}
\section{Introduction}
The study of operators with the deformed commutation relations
$$ a_ia^*_j = \sum_{r,s}t^{i\,r}_{j\,s}a^*_{r}a_s+\delta_{i\,j}\text{id}, \quad 1\leq i,j\leq d $$
started with \cite{BS94} by Bo{\.{z}}ejko and Speicher and also independently \cite{JSW93} by Jorgensen, Schmitt and Werner. When the operator $T\in B(\RR^d \otimes \RR^d)$ with the matrix coefficients $ t^{i\,r}_{j\,s} = \langle e_i\otimes e_r,T(e_j\otimes e_s) \rangle $ (called the twist operator) is self-adjoint, satisfying the braided relation (Yang-Baxter relation) $$ (\text{id}\otimes T)(T\otimes \text{id})(\text{id}\otimes T)=(T\otimes \text{id})(\text{id}\otimes T)(T\otimes \text{id}),$$ and has norm $\|T\|\leq 1$,  \cite{BS94} shows that the deformed commutation relations can be realized by the creation and annihilation operators on the $ T $-twisted Fock space $ \FF_T(\CC^d) $. The von Neumann algebra $ \mathcal{L}_{T}(\RR^d )$ generated by the self-adjoint operators $ X_i:= a_i+a^*_i $ on $ \FF_T(\CC^d) $ is called the $T$-twisted (or Yang-Baxter deformed) Gaussian algebra, which could be seen as a deformation of the free Gaussian algebras (or free group factors) for which $ T=0 $. When the twist $ T $ is the $q$-scaled tensor flip $T=qF$ with $-1<q<1$, then $ \mathcal{L}_{qF}(\RR^d)$ is the $q$-Gaussian algebra.

In \cite{dL22}, da Silva and Lechner generalized the above construction to the quasi-free cases, namely, we can replace $\RR^d$ (or in general a real Hilbert space $H_\RR$) by a nontrivial standard subspace $ H\subset \HH $ of a complex Hilbert space $ \HH $. The corresponding von Neumann algebra $ \mathcal{L}_T(H) $ is then called the $T$-twisted Araki-Woods von Neumann algebra since it is a deformation of the free Araki-Woods algebra \cite{shlyakhtenko1997free} ($T=0$) which is an analogy of the Araki-Woods factors ($T = -F$).

It is shown in \cite{dL22} that, a natural assumption to put on the twist $T$ is \begin{enumerate}[a)]
	\item  $T$ is compatible with the modular operator $ \Delta_H $ in the sense that $ [T,\Delta_H^{it}\otimes \Delta_H^{it}] = 0 $ for all $t\in \RR$,
	\item $ T $ is braided (satisfies the Yang-Baxter equation),
	\item $T$ satisfies a KMS type condition called crossing symmetry (see Definition \ref{crossing symmetry 1}).
\end{enumerate}
In fact, the vacuum state is separating for $ \mathcal{L}_T(H) $ with compatible $T$ if and only if $ T $ is braided and crossing symmetric. Under those assumptions, the modular data of the vacuum vector can be shown to be exactly the second quantization of the original modular data of $H\subset \HH$. For the tracial case (i.e. when $\HH$ is the complexification of $H=H_\RR$), the crossing symmetric condition becomes the cyclic relation $ t^{i\,r}_{j\,s} = t^{r\,s}_{i\,j} $ which already appeared in \cite{BS94}. When assuming that $ T $ is braided, this condition is also equivalent to the vacuum state being tracial. 

While the factoriality (as well as solidity and fullness) of the $q$-Araki-Woods algebras ($T = qF$) is now known for any standard subspace $ H\subset \HH $ due to the recent paper \cite{KSW23} based on a sequence of previous works including but not limited to \cite{bikram2022factoriality}\cite{skalski2018remarks}\cite{bikram2017generator}\cite{Fum01}, for a general twist $ T $ the factoriality of $ \mathcal{L}_T(H)$ is still unknown except for the tracial cases with certain assumption. In the recent preprint \cite{kumar2023conjugate}, using a dual system argument similar to \cite{MS23}\cite{KSW23}, it is also shown that mixed-$q$-Araki-Woods algebras are always non-injective factors.

 For the tracial case, if $ \|T\|<1 $ and $ \text{dim}\,H_\RR $ is finite but sufficiently large, \cite{Kr06} shows that the $T$-twisted Gaussian algebras $ \mathcal{L}_{T}({H_\RR} )$ must be type $\text{II}_1$ factors. Under the assumption that $ T $ has an eigenvector of the form $\xi_0\otimes \xi_0$, \cite{bikram2021neumann} shows that $\mathcal{L}_{T}({H_\RR} )$ is a $ \text{II}_1 $ for $\HH$ of any dimension. For infinite dimensional $H_\RR$, \cite{Kr00} (also \cite{bikram2021neumann}) shows that if the coefficients of $ T $ satisfy certain 'finitely supported' properties, then $ \mathcal{L}_{T}({H_\RR} ) $ is again a $\text{II}_1$ factor. In \cite{krolak2005contractivity}, a version of ultracontractivity of the $T$-twisted Ornstein-Uhlenbeck semigroup is also proved.

The proof of the factoriality of $q$-Araki-Woods in \cite{KSW23} essentially breaks into four cases: 1) $ \Delta_H^{it} $ has no almost periodic part (i.e. no eigenvectors). 2) $\Delta_H$ has a fixed vector $\xi$ ($\Delta_H^{it}$ is non-ergodic); 3) $H$ has a proper ergodic almost periodic two dimensional subspace; 4) The almost periodic part of $H$ has dimension $2$. Case 1) and Case 2) are due to \cite{bikram2017generator}\cite{skalski2018remarks} by showing that the centralizer of $ \mathcal{L}_{qF}(H) $ is always contained in the almost periodic part and that $ s_q(\xi) = X_q(\xi) $ generates a strongly mixing MASA. Case 3) is shown by \cite{bikram2022factoriality}. And the last case is proven via the conjugate system by \cite{KSW23}, generalizing the result in \cite{MS23} to the nontracial cases (after reducing the case to the almost periodic part using second quantization.) In fact, \cite{MS23}'s method directly proves the factoriality for all finite dimensional $H$, not just the case of dimension $2$.  The main difficulty of generalizing the arguments in the first three cases to $ T $-twisted algebras is that for a general twist $ T $, given a subspace $\KK\subseteq \HH$, we may not have $ T(\KK\otimes \KK)\subseteq \KK\otimes \KK $, which makes it hard to estimate the terms in $ \mathcal{L}_T(\HH) $. In fact, there exists nontrivial twist $T$ (Example \ref{twist on matrix}) such that any $S_H$-invariant subspace $\KK\subseteq \HH$ satisfying $ T(\KK\otimes \KK)\subseteq \KK\otimes\KK $ must be either $\HH$ or $ \{0\} $. Due to this reason, it is also difficult to use second quantization arguments to find expected subalgebras of $\mathcal{L}_T(\HH)$. It turns out that, however, the conjugate variable method in \cite{KSW23}\cite{MS23} is the part that ‘depends less' on the properties of the tensor flip $F$. And we will show that this method can be generalized to the twisted Araki-Woods algebras using similar arguments.

In this paper, we are mainly interested in the conjugate system of $ \mathcal{L}_T(H) $ when $H$ is finite dimensional and $ \|T\|< 1 $ and we want generalize the results in \cite{KSW23} to the case of a general twist $T$. Our computation is inspired by a formula about the free difference quotient of Wick polynomials for $q$-Gaussian algebras in Proposition 5.1. \cite{MS23}:
$$ \partial_i (e_{j_n}\otimes\cdots \otimes e_{j_1}) = \sum_{\pi\in C(n+1)}(-1)^{|p(\pi)|}q^{\text{cross}(\pi)-|s_r(\pi)|}\delta_{p(\pi)}e_{s_l(\pi)}\otimes e_{s_r(\pi)}, $$
where $ C(n+1) $ is certain family of partitions and $ s_l(\pi) $ ($s_r(\pi)$) is the set of singletons on the left (right) of a particular index $ k $. Roughly speaking, this particular formula suggests that one need to consider a certain decomposition of a general partition with respect to a specified singleton $k$. See Subsection 3.5 for a precise statement about this decomposition and Proposition \ref{formula for partial i} for a generalization of this formula.

Our main result is the following.
\begin{thm*}
	For finite dimensional standard subspace $H\subseteq \HH$ and compatible braided crossing symmetric twist $T$ with $ \|T\|<1 $, if $e_1,\cdots,e_d$ is a basis of $ \HH $, then the conjugate system $ (\Xi_1,\cdots,\Xi_d) $ for $ (X_T(e_1),\cdots,X_T(e_d))\subseteq \mathcal{L}_T(H) $ exists and has the concrete form
	$$ \Xi_i = \sum_{n=0}^{\infty}(-1)^{n}P_{T,2n+1}^{-1}\sum_{ \pi\in B(2n+1) }(W_\pi^T)^*f_i \in \FF_T(\HH) ,\quad \forall 1\leq i\leq d, $$
	where $ (f_1,\cdots,f_d) $ is the dual basis of $ ( e_1,\cdots,e_d ) $ in $ \HH $, and $ B(2n+1)\subseteq \mathcal{P}_{1,2}(2n+1) $ is the set of incomplete matchings such that $ n+1 $ is a singleton and each $ 1\leq k\leq n $ must be paired with an element in $ \{n+2,\cdots,2n+1\} $.
\end{thm*}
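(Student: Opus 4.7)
The plan is to verify the asserted formula by testing the defining duality for conjugate variables directly against the family of Wick polynomials $W^T(e_{j_n}\otimes\cdots\otimes e_{j_1})$, which spans a $\sigma$-weakly dense $*$-subalgebra of $\mathcal{L}_T(H)$. First I would apply the explicit formula from Proposition \ref{formula for partial i} for the free difference quotient $\partial_i$ on a Wick polynomial of length $n$: it expresses $\partial_i W^T(\eta)$ as a sum over partitions carrying a distinguished singleton at the position corresponding to the differentiation index, each summand splitting $\eta$ into left and right subwords weighted by the crossing data coming from $T$.

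Next I would translate the conjugate-variable identity through the GNS identification $x\mapsto x\Omega$ and use the fact that the twisted Fock-space inner product at tensor level $m$ is of the form $\langle \xi,\zeta\rangle_T=\langle \xi, P_{T,m}\zeta\rangle$, where $P_{T,m}$ is the twisted symmetrization operator. The defining equation for $\Xi_i$ then decouples level by level into a linear equation of the shape $P_{T,m}\, \Xi_i^{(m)} = \sum_{\pi} c_\pi\,(W_\pi^T)^* f_i$; inverting $P_{T,m}$ (bounded and invertible on account of $\|T\|<1$) reproduces exactly the $n$-th summand in the stated formula. The sign $(-1)^n$ records the alternation between creation and annihilation contributions in $X_T(e_i)$, and the dual basis $f_i$ appears in the annihilation part precisely because $\langle f_i, e_j\rangle = \delta_{ij}$ extracts the correct coordinate.

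The combinatorial heart of the argument is the identification of the matchings appearing on the right-hand side with the set $B(2n+1)$. The singleton at position $n+1$ encodes the position of the distinguished differentiation index, while the requirement that every $1\leq k\leq n$ be paired with an element of $\{n+2,\ldots,2n+1\}$ captures the left-right split structure across this singleton --- precisely the decomposition of partitions developed in Subsection 3.5. I would check level by level that the weights produced by $(W_\pi^T)^*$ coincide with those in the $\partial_i$-formula, using the Yang-Baxter relation repeatedly to justify the necessary rearrangements of braid letters and absorb the twist contributions into the coefficient $c_\pi$ above.

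For $L^2$-convergence of the series in $\FF_T(\HH)$ I would combine the uniform bound $\sup_n\|P_{T,2n+1}^{-1}\|<\infty$ --- a consequence of Bo\.{z}ejko-Speicher positivity when $\|T\|<1$ --- with the crude estimates $|B(2n+1)|\leq n!$ and a geometric control on $\|(W_\pi^T)^* f_i\|$ in terms of $d=\dim\HH$ and $\|T\|$. The main obstacle I anticipate is the combinatorial matching step: carefully aligning partitions, signs, and the twist action on pair structure so that the candidate formula really inverts the derivative against the whole dense family of Wick polynomials, rather than matching only a particular subfamily. Once this alignment is verified, convergence and the finiteness of the non-microstates free Fisher information follow from routine Fock-space estimates.
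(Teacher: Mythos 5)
Your overall strategy --- testing the candidate $\Xi_i$ against Wick polynomials via the defining duality, invoking Proposition~\ref{formula for partial i}, and observing that pairing with $\Omega\otimes\Omega$ forces the surviving partitions into $B(2n+1)$ because only one singleton is allowed --- matches the paper's proof exactly, and the combinatorial alignment you worry about is in fact automatic once Proposition~\ref{formula for partial i} and the decomposition of Subsection~3.5 are in hand: no further Yang--Baxter manipulation is needed at that step.

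However, your convergence argument has a genuine gap. You propose to combine three ingredients: $\sup_n\|P_{T,2n+1}^{-1}\|<\infty$, the count $|B(2n+1)|\leq n!$, and a ``geometric control on $\|(W_\pi^T)^*f_i\|$.'' The first claim is not what Bo\.{z}ejko's theorem provides for a general braided twist: Theorem~6 of~\cite{Bo98} gives only $\|P_{T,n}^{-1}\|\leq\omega(q)^{-n}$ with $\omega(q)^2=(1-q^2)^{-1}\prod_{k\geq 1}(1-q^k)(1+q^k)^{-1}$, and since $\omega(q)<1$ in general this bound grows geometrically in $n$. More seriously, even granting a uniform bound on $\|P_{T,2n+1}^{-1}\|$, the product of a factorial $n!$ against a merely geometric decay $q^n$ diverges. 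The estimate that actually makes the series converge is \emph{super}-geometric: for $\pi\in B(2n+1)$, each of the $n$ left points is paired across the middle singleton to the right block, forcing $\Cr(\pi)\geq\tfrac{n(n+1)}{2}$, hence $\|W_\pi^T\|\leq\|C_1\|^n\, q^{n(n+1)/2}$. It is this $q^{n(n+1)/2}\lesssim e^{-cn^2}$ decay that dominates $(2n+1)!\,\omega(q)^{-2n-1}\lesssim e^{c'n\log n}$. Without the lower bound on $\Cr(\pi)$ over $B(2n+1)$, the convergence step does not close, and your plan as written would not establish that $\Xi_i$ is a well-defined vector in $\FF_T(\HH)$.
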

 Here $ W^T_\pi $'s are certain $T$-twisted contraction operators related to Wick products. The convergence of the series is due to the same reason as in the $q$-Gaussian cases \cite{MS23}: for each $ \pi \in B(2n+1) $, the norm of the twisted contraction operators $W^T_\pi$ decays rapidly in the sense that $ \| W^T_\pi \|\lesssim \|T\|^{\frac{n(n+1)}{2}}= q^{\frac{n(n+1)}{2}} $. In particular, this implies the factoriality of $\mathcal{L}_T(H)$ for all finite dimensional $H$ with $ \|T\|=q<1 $ by \cite{Ne17}. Also, one can use the (nontracial) free monotone transport \cite{nelson2015free} to show that $ \mathcal{L}_T(H) $ is isomorphic to the free Araki-Woods algebra for $\|T\|$ small enough.

Our computation for the conjugate system essentially relies on the $T$-Wick formula (Therem \ref{Wick product}), and thus our approach is different from \cite{MS23} (they first derived a formula for a dual system by induction and then constructed the conjugate system from the dual system). Note that a version of $T$-Wick formula in terms of operators of the form $ a^*_{i_1}\cdots a^*_{i_s}a_{j_1}\cdots a_{j_t}$ already exists for a general twist $ T $ by \cite{Kr00}. While this version of $T$-Wick formula is a powerful tool for example when constructing the second quantization, (also frequently used to show that a $\Delta_H$-fixed vector $\xi$ generates MASA), it is unclear how one can use it to compute the conjugate system. Due to this reason, we need to first derive a new $T$-Wick formula in terms of polynomials in $ X_T(e_1),\cdots,X_T(e_d) $ before we could actually compute the conjugate system.

The idea of our main computation is as follows: To calculate the conjugate variable $ \Xi_i = \partial_i^{*}(\Omega\otimes \Omega) $ (assuming it exists), it suffices to compute the free difference quotient $ \partial_i $ on some nice basis and then take the adjoint. There are two ways to look at $ \partial_i $. We can either consider $ \partial_i:\CC\langle x_1,\cdots,x_d\rangle\to \CC\langle x_1,\cdots,x_d\rangle\otimes \CC\langle x_1,\cdots,x_d\rangle $ as a map defined on polynomials, or consider $\partial_i:\FF_T(\HH)\to \FF_T(\HH)\otimes \FF_T(\HH)$ as a densely defined unbounded operator. While it is easy to compute $ \partial_i $ on polynomials (simply by definition), in order to compute $ \partial_i^*(\Omega\otimes \Omega) $, one is forced to apply $ \partial_i $ to Fock basis vector $ \xi_1\otimes\cdots\otimes \xi_n \in \FF_T(\HH)$ which corresponds to the Wick polynomial $ \varPhi(\xi_1\otimes\cdots\otimes \xi_n)\in \mathcal{L}_T(H) $. Therefore, to get a formula for $ \partial_i( \xi_1\otimes\cdots\otimes \xi_n ) $, one needs to first apply $T$-Wick formula, then compute the free difference quotient, and finally apply the inverse of $T$-Wick formula as in the diagram below.

\[
\begin{tikzcd}
	\FF_T(\HH)\ni\hspace{-6,5em}&\xi_1\otimes\cdots\otimes \xi_n\arrow[dashed]{d}{\partial_i}\arrow{r}{\varPhi}[swap]{\text{Thm. \ref{Wick product}} }& \varPhi(\xi_1\otimes\cdots\otimes \xi_n) \arrow{d}{\partial_i}\arrow{dl}{\text{Prop. \ref{formula for partial i} }}&\hspace{-6.5em}\in\mathcal{L}_T(H)\\
	\FF_T(\HH)\otimes \FF_T(\HH)\ni\hspace{-2.5em}&\partial_i(\xi_1\otimes\cdots\otimes \xi_n) & \partial_i\varPhi(\xi_1\otimes\cdots\otimes \xi_n) \ar[l,"\varPhi^{-1}\otimes \varPhi^{-1}"]&\hspace{-2.5em}\in\mathcal{L}_T(H)\otimes \mathcal{L}_T(H)^{op}
\end{tikzcd}
\]

The plan of the paper is as follows: In Section $2$, we give a quick introduction to the $T$-twisted Araki-Woods algebras and recall the basic notations about conjugate system. Some examples of compatible braided crossing symmetric twists on finite dimensional spaces are given. Section $3$ is devoted to the formula of the $T$-Wick product. And in Section $4$ we compute the conjugate system and establish the main results of this paper. In Section $5$, we apply the nontracial free monotone transport \cite{nelson2015free} to $\mathcal{L}_T(H)$ with small $ \|T\|=q $. We also collected some results about the non-injectivity of $ \mathcal{L}_T(H) $ for infinite dimensional $ \HH $ in the appendix using arguments similar to the corresponding arguments in \cite{Fum01} for $q$-Araki-Woods algebras.

\section{Preliminary}
\subsection{Twisted Fock space}
We begin with a brief introduction to the Twisted Fock space following \cite{dL22}. Let $ \HH $ be a Hilbert space, $ T\in B(\HH\otimes \HH) $ be a bounded self-adjoint operator with $ \|T\|\leq 1 $. For each $n\geq 2$, we define $ T_k = 1^{k-1}\otimes T\otimes 1^{n-k-1}\in B(\HH^{\otimes n}) $. Define inductively \begin{align}
	R_{T,n} &:= 1+T_1+T_1T_2+\cdots+ T_1\cdots T_{n-1},\\
	P_{T,1}&:= R_{T,1}, \quad P_{T,n} := (1\otimes P_{T,n-1})R_{T,n}.
\end{align}
In particular, $ P_{T,n} = (1\otimes P_{T,n-1})R_{T,n}= (1+T_{n-1})(1+T_{n-2}T_{n-1})\cdots (1+T_1+\cdots+T_1\cdots T_{n-1}) $, and one can show that $ P_{T,n} $ is self-adjoint whenever $ T $ is self-adjoint.
\begin{defn}
	A twist $ T $ is a operator in $$ \TT_{\geq }:=\{T\in B(\HH \otimes \HH): \|T\|\leq 1, \quad P_{T,n}\geq 0 \quad \forall n\in \NN_+\}. $$
	
	A strict twist $ T $ is a operator in $$ \TT_{> }:=\{T\in B(\HH \otimes \HH): \|T\|\leq 1, \quad P_{T,n} \mbox{ strictly positive} \quad \forall n\in \NN_+\}. $$ And a braided twist $ T $ is a twist $ T\in \TT_{\geq } $ that satisfies the Yang-Baxter equation $$ T_1T_2T_1 = T_2T_1T_2. $$
\end{defn}

\begin{thm}[\cite{BS94}]
	Let $ T\in B(\HH\otimes \HH) $ be a self-adjoint operator. If $ \|T\|\leq 1 $ and $ T $ is braided, then $ T\in \TT_{\geq } $. If furthermore $ \|T\|<1 $, then $ T\in \TT_{> } $.
\end{thm}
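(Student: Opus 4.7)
The plan is to combine Matsumoto's theorem with a classical positivity result for the length kernel on the Coxeter group $S_n$. The argument is inductive on $n$, the base case $P_{T,1}=1$ being trivial.

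First, observe that the braid relation $T_1T_2T_1=T_2T_1T_2$ together with the automatic commutation $[T_i,T_j]=0$ for $|i-j|\geq 2$ constitute exactly the defining relations of the braid group $B_n$. Hence by Matsumoto's theorem the assignment $s_i\mapsto T_i$ extends unambiguously to $\sigma\mapsto T_\sigma$, independent of the chosen reduced expression $\sigma=s_{i_1}\cdots s_{i_{\ell(\sigma)}}$, and the quasi-multiplicativity $T_\sigma T_\tau=T_{\sigma\tau}$ holds whenever $\ell(\sigma\tau)=\ell(\sigma)+\ell(\tau)$. Using the right-coset decomposition $S_n=\bigsqcup_{k=0}^{n-1}S'_{n-1}\cdot(s_1 s_2\cdots s_k)$, where $S'_{n-1}$ denotes the stabilizer of position $1$, and recognizing $R_{T,n}=\sum_{k=0}^{n-1}T_{s_1\cdots s_k}$, an induction on $n$ yields
$$P_{T,n}=(1\otimes P_{T,n-1})\,R_{T,n}=\sum_{\sigma\in S_n}T_\sigma.$$
This identification makes the self-adjointness of $P_{T,n}$ transparent, since $\sigma\mapsto\sigma^{-1}$ is a length-preserving involution of $S_n$ and $T_{\sigma^{-1}}=T_\sigma^*$.

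Next I would establish positivity by factoring $P_{T,n}=V^*V$ with the help of the classical theorem that the $q$-length kernel $K_q(\sigma,\tau):=q^{\ell(\sigma^{-1}\tau)}$ is positive semidefinite on $S_n$, strictly so when $0\leq q<1$ (the strictness amounting to the nonvanishing of the Poincar\'e polynomial of $S_n$ at $q$). Setting $q:=\|T\|$, the norm estimate $\|T_\sigma\|\leq q^{\ell(\sigma)}$ (which follows from contractivity and quasi-multiplicativity) together with the scalar positivity of $K_q$ allow one to construct a bounded map $V:\HH^{\otimes n}\to\HH^{\otimes n}\otimes\ell^2(S_n,K_q^{-1})$ by $V\xi=\sum_\sigma T_\sigma\xi\otimes\delta_\sigma$, whose Gram operator $V^*V$ collapses, via the braided quasi-multiplicativity, precisely to $\sum_\sigma T_\sigma=P_{T,n}$. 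This yields $P_{T,n}\geq 0$. For $\|T\|<1$, the strict positivity of $K_q$ and the nondegeneracy of the identity contribution $T_e=I$ force $V$ to be injective, giving $P_{T,n}>0$.

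The main obstacle I anticipate is making the dilation in the second step rigorous: the assignment $\sigma\mapsto T_\sigma$ is only a braid-monoid representation, not a group representation, so the Gram computation $V^*V=\sum_\sigma T_\sigma$ requires careful bookkeeping of when the products $T_\sigma^*T_\tau$ reduce to $T_{\sigma^{-1}\tau}$ weighted by appropriate scalars coming from $K_q$. An alternative, closer to the original Bo{\.z}ejko--Speicher argument, is to prove positivity directly by an inductive explicit factorization of $P_{T,n}$ in terms of $R_{T,n}$, $R_{T,n}^*$ and $P_{T,n-1}$: the braid relation is exactly what guarantees that the two natural unfoldings of the recursion (in the first versus in the last tensor slot) remain consistent, and the strict case $\|T\|<1$ then follows from a perturbation argument starting at $T=0$, where $P_{T,n}=I$, together with continuity of spectrum along the one-parameter family $T(s)=sT$.
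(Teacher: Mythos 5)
This theorem is quoted from \cite{BS94}; the paper supplies no proof of its own, so your argument has to stand independently. Your first step is correct and is indeed how the classical argument begins: Matsumoto's theorem makes $\sigma\mapsto T_\sigma$ well defined on reduced words, the minimal right-coset representatives $s_1s_2\cdots s_k$ of the parabolic subgroup $\langle s_2,\dots,s_{n-1}\rangle$ give $P_{T,n}=(1\otimes P_{T,n-1})R_{T,n}=\sum_{\sigma\in S_n}T_\sigma$ by induction, and self-adjointness follows from $T_{\sigma^{-1}}=T_\sigma^*$.

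The positivity step, however, contains a genuine gap — the very one you flag but do not close, and I do not believe it can be closed in the form you propose. Whatever positive semidefinite weight matrix $G$ you place on $\ell^2(S_n)$, the Gram operator is $V^*V=\sum_{\sigma,\tau}G(\sigma,\tau)\,T_\sigma^*T_\tau$, and $T_\sigma^*T_\tau=T_{\sigma^{-1}}T_\tau$ equals $T_{\sigma^{-1}\tau}$ only when $\ell(\sigma^{-1}\tau)=\ell(\sigma)+\ell(\tau)$; otherwise it is the operator of a non-reduced word, which (since $T_i^2\neq 1$) is a genuinely different operator, not a scalar multiple of $T_{\sigma^{-1}\tau}$, so no scalar kernel can absorb the discrepancy. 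Worse, the diagonal terms already kill the construction: for $\sigma\neq e$ one has $\ell(\sigma^{-1}\sigma)=0<2\ell(\sigma)$, so consistency forces $G(\sigma,\sigma)=0$, and a positive semidefinite matrix vanishing on its diagonal off $(e,e)$ is supported only at $(e,e)$, collapsing $V^*V$ to the identity rather than $P_{T,n}$. Note also that the scalar positive definiteness of $q^{\ell}$ on $S_n$ is a special case of the theorem you are proving (take $T=qF$), not an independent linearizing ingredient, and at $\|T\|=1$ (covered by the first assertion) the kernel degenerates entirely. Your fallback — the inductive factorization of $P_{T,n}$ through $R_{T,n}$ using the Yang--Baxter identity (the paper's Lemma \ref{Tst+2Tst+1 = Ts+1t+2Tst+2}, from \cite{Bo98}) — is the correct route and is what \cite{BS94} actually carries out, but you only name it. Finally, the perturbation argument for strictness does not work as stated: the set $\{s\in[0,1]:P_{sT,n}\text{ invertible}\}$ is open and contains $0$, but closedness requires an a priori bound on $\|P_{sT,n}^{-1}\|$ along the path, and producing such a bound (e.g.\ $P_{T,n}\geq\omega(q)^{n}>0$ as in \cite{Bo98}) is precisely the substance of the strict-positivity claim.
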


If $ T $ is a twist, for each $n\geq 1$, consider the quotient space $ \HH_{T,n}^0 =  \HH^{\otimes n}/\ker P_{T,n} = \overline{  \mbox{Ran}P_{T,n} }$, let $ \HH_{T,n} $ be the completion of this quotient space with respect to the inner product $ \langle \cdot,P_{T,n}\cdot \rangle $. For $n=0$, we set $ \HH_{T,0}=\CC\Omega  $ where $ \Omega $ is a unit vector called the vacuum vector.

\begin{defn}
	Let $ T\in \TT_{\geq} $ be a twist, the $T$-twisted Fock space is the Hilbert space:
	$$ \FF_T(\HH) = \bigoplus_{n=0}^{\infty}\HH_{T,n},$$
	where the inner product $ \langle \cdot,\cdot\rangle_{T} $ is $ \langle f,g \rangle_{T}  = \sum_{n=0}^{\infty}\langle f_n,P_{T,n}g_n\rangle$ for $ f_n,g_n\in \HH_{T,n} $, $ f = \oplus_{n=0}^{\infty}f_n \in \FF_T(\HH)$, and $ g = \oplus_{n=0}^{\infty}g_n \in\FF_T(\HH)$.
\end{defn}
We will always use $ \langle \cdot,\cdot\rangle_{T} $ to denote the inner product on $ \FF_T(\HH) $ and $ \HH_{T,n} $, and will denote the untwisted inner product on $\FF_0(\HH)$ and $ \HH^{\otimes n} $ by $ \langle \cdot,\cdot\rangle_0 $ or simply $\langle \cdot,\cdot\rangle$.

By the defining formula $ P_{T,n} = (1\otimes P_{T,n-1})R_{T,n}$, since $ P_{T,n} $ is self-adjoint, we also have $ P_{T,n} = R_{T,n}^*(1\otimes P_{T,n-1}) $, which implies that the left creation operator $$ a^*(h): \Phi_n\mapsto h\otimes \Phi_n,\quad \forall h\in \HH, \Phi_n\in \HH^{\otimes n}$$ sends $ \ker P_{T,n} $ to $ \ker P_{T,n+1} $. Therefore, for each $ h\in \HH $, the left creation operator $ a^*(h) $ is well defined on the dense subset $\HH_{T,n}^0\subseteq \HH_{T,n} $. In fact, one can show that
$$ \|a^*(h)\big|_{\HH_{T,n}}\|_T\leq (1+\|T\|+\cdots +\|T\|^{n})\|h\|.$$
We denote $ a^*_T(h) $ to be its extension to $ \FF_T(\HH) $ to emphasize that it acts on $ \FF_T(\HH) $.

The left annihilation operator $ a_T(h) $ is defined to be the adjoint of $ a^*_T(h) $. On $ \HH_{T,n}^0 $, one can compute that
$$ a_T(h)[ \Phi_n ] = [a(h)R_{T,n}\Phi_n], \quad \forall \Phi_n\in \HH^{\otimes n},$$
where $ a(h) $ is the standard annihilation operator on the full Fock space $ \FF_0(\HH)=\bigoplus_{n=0}^{\infty}\HH^{\otimes n} $: $ a(h): f_1\otimes \cdots \otimes f_n \mapsto \langle h,f_1\rangle f_2\otimes\cdots \otimes f_n$.

We will mostly be working with strict twist $T\in \mathcal{T}_{>}$, and thus $ \text{ker}P_{T,n}={0} $. In this case, we may use notation $ \xi_1\otimes\cdots\otimes\xi_n $ to denote either a vector in $ \HH^{\otimes n} $ or a vector in $ \HH_{T,n} $.

\subsection{Twisted Araki-Woods von Neumann algebra}
Let $ H\subset \HH $ be a standard subspace with Tomita operator $ S_H = J_H\Delta_H^{1/2} $. That is, $ H $ is a closed real subspace of $\HH$ such that $H+iH\subseteq \HH $ is dense, $ S_H $ is the closed operator with domain $D(S_H)=H+iH$ and $$S_H(h+ih') = h-ih',\quad\forall h,h'\in H,$$ which has polar decomposition $ S_H = J_H\Delta_H^{1/2} $. Let $ T $ be a twist for $ \HH $. We consider the left field operators $$ X_T(\xi) := a^*_T(\xi)+a_T(S_H\xi), \quad \forall \xi\in H+iH,$$
which is essentially self-adjoint when $ \|T\|\leq 1 $ and bounded when $ \|T\|<1 $.

\begin{defn}
	Given a standard subspace $ H\subset \HH $ and a twist $ T\in \mathcal{T}_\geq $ for $ \HH $, we define the $T$-twisted Araki-Woods von Neumann algebra as
	$$ \LL_{T}(H):= \{ \exp(iX_T(\xi)):\xi\in H+iH \}''\subseteq B(\FF_T(\HH)). $$
	When $ X_T(\xi) $ are all bounded operators (in particular, when $ \|T\|<1 $), $ \LL_{T}(H) $ is simply the weak-$*$-closure of the $*$-algebra $ \mathcal{P}_{T}(H) $ of all polynomials in $ X_T(\xi) $ for $ \xi\in H $.
\end{defn}

We will always assume that a twist $ T $ is compatible with the embedding $H\subset \HH$ in the following sense.
\begin{defn}
	Let $H\subset \HH$ be a standard subspace with modular operator $\Delta_H$. A twist $ T\in \mathcal{T}_{\geq} $ on $ \HH $ is said to be compatible with $H$ if
	$$ [\Delta_H^{it}\otimes \Delta_H^{it},T] = 0, \quad \forall t\in \RR.$$
	We will denote $ \mathcal{T}_{\geq}(H) $ the set of all such twists, and $ \mathcal{T}_{>}(H):=\mathcal{T}_{\geq}(H)\cap \mathcal{T}_{>} $.
\end{defn}

\subsection{Separablity of the vacuum vector}
For a compatible twist $T\in \mathcal{T}_{\geq}(H)$, \cite{dL22} shows that the vacuum vector $ \Omega $ is cyclic and separating for $\mathcal{L}_T(H)$, if and only if \begin{enumerate}[(1)]
	\item $T$ braided: $T_1T_2T_1=T_2T_1T_2$,
	\item $ T $ is crossing symmetric (defined below).
\end{enumerate}

The crossing symmetric condition is defined using analytic functions over strips. We follow the notation in \cite{dL22}: Let $ \SSS_{1/2} $ be the strip region $ \SSS_{1/2} := \{  z\in \CC: -1/2< \mbox{Im}(z)< 0  \} $. The vector space of bounded analytic functions on $ \SSS_{1/2} $ is denoted
$$ \HHH_{c.bv.}^\infty(\SSS_{1/2}):=\{ f:\overline{\SSS_{-1/2}}\to \CC \mbox{ is continous and bounded, and }f|_{\SSS_{1/2}} \mbox{ is analytic.} \}.$$

\begin{defn}[Crossing Symmetry]
	Let $ T\in \mathcal{T}_{\geq} $ be a twist on a (complex) Hilbert space $\HH$, and $ H\subset \HH $ a standard real Hilbert subspace with the modular operator $ \Delta_H $ and modular conjugation $ J_H $. Then $T$ is said to be crossing symmetric with respect to $ H $ if for all $ \psi_1,\cdots,\psi_4\in \HH $, the function
	\begin{equation}\label{crossing symmetry 1}
		T^{\psi_2,\psi_1}_{\psi_3,\psi_4}(t):= \langle \psi_2\otimes \psi_1,(\Delta_H^{it}\otimes 1)T(1\otimes \Delta_H^{-it})(\psi_3\otimes \psi_4) \rangle
	\end{equation}
	lies in $ \HHH_{c.bv.}^\infty(\SSS_{1/2}) $, and for $t\in \RR$,
		\begin{equation}\label{crossing symmetry 2}
		T^{\psi_2,\psi_1}_{\psi_3,\psi_4}(t+\frac{i}{2}):= \langle \psi_1\otimes J_H\psi_4,(1\otimes \Delta_H^{it})T(\Delta_H^{-it}\otimes 1)(J_H\psi_2\otimes \psi_3) \rangle.
	\end{equation}
\end{defn}
In particular, when $ \Delta_H=\text{id} $ ($H\subseteq \HH$ is tracial), the crossing symmetry becomes the cyclic relations
$$ t^{kl}_{ij}=t^{lj}_{ki},\quad\forall i,j,k,l\in I,$$
where $ t^{kl}_{ij}:= \langle e_{k}\otimes e_{i}, T(e_{i}\otimes e_j)\rangle $ are the coefficients for a fixed orthonomal basis $ \{e_i\}_{i\in I} $ of $H$.

There are also several equivalent forms of the crossing symmetry stated in \cite{dL22}. The idea of the proofs is mostly due to \cite{dL22}, and we sketch the proof here for the sake of remaining self-contained. (See also Corollary \ref{C1T2=C2T1} for another equivalent form using the contraction operators.)
\begin{lem}\label{all equivalent form of CS}
	For standard subspace $H\subset \HH$ and a compatible twist $ T\in \mathcal{T}_{\geq }(H) $, the followings are equivalent.
	\begin{enumerate}[a)]
		\item $T$ is crossing symmetric.
		\item For any $ \psi_2,\varphi_1\in \HH $ and $ \psi_1,\varphi_2\in H+iH $, the function
		$$ f(t) = \langle \psi_1\otimes \psi_2, T(\varphi_1\otimes \Delta_H^{it}\varphi_2)\rangle,\quad t\in \RR $$
		belongs to $ \HHH_{c.bv.}( \SSS_{-1}) $ i.e. $f$ is analytic on the strip $ \SSS_{-1}=\{ z\in \CC: -1<\text{Im}(z)<0 \} $ and continuous on $ \overline{\SSS_{-1}} $. At boundary,
		$$ f(t-i) = \langle \psi_2\otimes S_H \Delta_H^{it}\varphi_2,T(S_H\psi_1 \otimes\varphi_1)\rangle. $$
		\item For any $ \varphi_1\in H+iH $, $ \varphi_2\in \HH $, and $ \varphi_3\in H'+iH' $,
		$$ a(S_H\varphi_1)T(\varphi_2\otimes \varphi_3) =a_r(S_H^*\varphi_3)T(\varphi_1\otimes \varphi_2),$$
		where $ a_r(\xi) $ is the right annihilation operator $ a_r(\xi)(\eta_1\otimes\cdots\otimes \eta_m) = \langle \xi,\eta_m \rangle \eta_1\otimes \cdots\otimes \eta_{m-1} $.
		\item For all $ \varphi_1,\varphi_3\in \HH $, $ \varphi_2\in H+iH $, $ a(\varphi_1)T(\varphi_2\otimes \varphi_3) \in D(S_H) = H+iH $ and
		$$ S_Ha(\varphi_1)T(\varphi_2\otimes \varphi_3) = a(\varphi_3)T(S_H \varphi_2 \otimes \varphi_1). $$
	\end{enumerate}
\end{lem}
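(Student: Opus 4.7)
My plan is to close the cycle $(a)\Rightarrow(d)\Leftrightarrow(c)\Rightarrow(b)\Rightarrow(a)$, with the compatibility $[\Delta_H^{it}\otimes \Delta_H^{it},T]=0$ as the workhorse throughout. All four conditions encode the same KMS-type analytic structure for the bilinear form $\langle \cdot, T(\cdot)\rangle$ against the modular flow $\Delta_H^{it}$. The basic analytic fact to be exploited is that for $\xi\in H+iH$ the orbit $t\mapsto \Delta_H^{it}\xi$ admits an analytic continuation to $\SSS_{-1/2}$ with boundary value $\Delta_H^{1/2}\Delta_H^{it}\xi$ at $t-i/2$; combined with the polar decomposition $S_H=J_H\Delta_H^{1/2}$, half-strip extensions automatically produce $S_H$ via $J_H$, and iterating over the two tensor legs produces the width-$1$ strip appearing in (b).

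For $(a)\Rightarrow(d)$, I would set $t=0$ in the boundary identity \eqref{crossing symmetry 2} and formally continue the function $T^{\psi_2,\psi_1}_{\psi_3,\psi_4}$ to $t=i/2$. Via compatibility, this rewrites as $\langle \Delta_H^{-1/2}\psi_2\otimes \psi_1, T(\psi_3\otimes \Delta_H^{1/2}\psi_4)\rangle$ on $\Delta_H$-analytic domains; then using $J_H=\Delta_H^{1/2}S_H^*=S_H\Delta_H^{-1/2}$ on the right-hand side of \eqref{crossing symmetry 2} and matching yields the pointwise identity in (d) after extending by density in $\psi_2,\psi_4$. The equivalence $(c)\Leftrightarrow(d)$ is then a formal bookkeeping step: pairing the identity in (d) against $\zeta\in H'+iH'=D(S_H^*)$, the left annihilation $a(\cdot)$ acting on a vector in $D(S_H)$ translates, via the antilinear duality $\langle S_H w,\zeta\rangle=\overline{\langle S_H^*\zeta,w\rangle}$ and the self-adjointness of $T$, into a right annihilation $a_r(S_H^*\zeta)$ acting on the swapped tensor argument; the resulting functional identity is exactly the one obtained by pairing (c) against a test vector, and a density argument in the remaining slots closes the equivalence.

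To close with $(c)\Rightarrow(b)\Rightarrow(a)$, I would observe that (c), paired with $\psi_2\otimes S_H\Delta_H^{it}\varphi_2$ and using compatibility to insert $\Delta_H^{it}\otimes \Delta_H^{it}$, gives directly the claimed boundary value $f(t-i)$ in (b). Analyticity on the full strip $\SSS_{-1}$ follows from the $\Delta_H$-analyticity (of width $1$) of the orbit $t\mapsto \Delta_H^{it}\varphi_2$ for $\varphi_2\in H+iH$, combined with $\|T\|\le 1$ via a three-line argument on the a~priori bounded continuous function $f$. Finally $(b)\Rightarrow(a)$ is a change of variables: the function in \eqref{crossing symmetry 1} equals a shift of $f(t)$ after insertion of $\Delta_H^{is}\otimes \Delta_H^{is}$ factors allowed by compatibility, so restricting the width-$1$ extension from (b) to the sub-strip of width $1/2$ and reading off boundary values using $J_H^2=\mathrm{id}$ and the polar decomposition of $S_H$ recovers \eqref{crossing symmetry 2}.

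The main obstacle I anticipate is $(a)\Rightarrow(d)$, specifically justifying the formal substitution $t=i/2$ inside $T^{\psi_2,\psi_1}_{\psi_3,\psi_4}$ for arbitrary $\psi_2,\psi_4\in\HH$ rather than only $\Delta_H$-analytic vectors; this requires approximation by entire analytic vectors for $\Delta_H$ together with the uniform boundedness of the function on the closed strip guaranteed by the $\HHH_{c.bv.}^\infty$ hypothesis. Once the domain bookkeeping is settled, the remaining implications are essentially algebraic, reducing to compatibility plus the polar decomposition $S_H=J_H\Delta_H^{1/2}$ and the definitions of the annihilation operators.
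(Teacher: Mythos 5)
Your overall strategy is a valid and slightly different cycle from the paper's: you propose $(a)\Rightarrow(d)$, $(d)\Leftrightarrow(c)$, $(c)\Rightarrow(b)$, $(b)\Rightarrow(a)$, whereas the paper closes $(b)\Rightarrow(a)\Rightarrow(c)\Rightarrow(d)\Rightarrow(b)$. Your $(a)\Rightarrow(d)$ plan (set $t=0$ in \eqref{crossing symmetry 2}, match against the formal continuation of \eqref{crossing symmetry 1} at $z=i/2$, substitute $\psi_2=\Delta_H^{1/2}\varphi_2$, $\psi_4=\Delta_H^{-1/2}\varphi_4$, and use $S_H=J_H\Delta_H^{1/2}$, $S_H^*=J_H\Delta_H^{-1/2}$) does produce the identity $\langle\varphi_2\otimes\varphi_1,T(\varphi_3\otimes\varphi_4)\rangle=\langle\varphi_1\otimes S_H^*\varphi_4,T(S_H\varphi_2\otimes\varphi_3)\rangle$, which after rewriting via self-adjointness of $T$ is exactly (d); and your $(c)\Leftrightarrow(d)$ bookkeeping (using $S_H^*S_H^*=\mathrm{id}$ to undo the shift of the test vector) is sound. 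You correctly flag that the $(a)\Rightarrow(d)$ step needs a closability/density argument to pass from $\Delta_H$-analytic vectors to all of $H+iH=D(S_H)$, but this is routine.

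The genuine gap is in $(c)\Rightarrow(b)$. You claim ``the $\Delta_H$-analyticity (of width $1$) of the orbit $t\mapsto\Delta_H^{it}\varphi_2$ for $\varphi_2\in H+iH$.'' This is false: $H+iH=D(\Delta_H^{1/2})$, so that orbit extends analytically only to the strip $\{-1/2<\mathrm{Im}\,z<0\}$ of width $1/2$; width $1$ would require $\varphi_2\in D(\Delta_H)$, a strictly smaller set. Moreover, the boundary value $f(t-i)$ you want to evaluate involves $\Delta_H^{i(t-i)}\varphi_2=\Delta_H\Delta_H^{it}\varphi_2$, which simply is not defined for general $\varphi_2\in H+iH$. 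The three-line theorem cannot rescue this, since it produces bounds from known analyticity, not analyticity from bounds. What actually works (and is what the paper does, in its $(d)\Rightarrow(b)$ step) is to first restrict to $\Delta_H$-\emph{analytic} $\varphi_2$, for which $t\mapsto\Delta_H^{it}\varphi_2$ is entire, compute $f(t-i)$ directly via the algebraic identity and compatibility, and then use the density argument from Lemma 3.7 of \cite{dL22} to pass to all $\varphi_2\in H+iH$. Your $(b)\Rightarrow(a)$ sketch also glosses over that \eqref{crossing symmetry 1} involves a diagonal flow $(\Delta_H^{it}\otimes 1)T(1\otimes\Delta_H^{-it})$ rather than a single-leg flow as in $f$; the paper handles this with a two-variable analytic function $g(t,s)$ restricted to the diagonal $s=t+i/2$, and you would need to be explicit that this is what ``a shift of $f$'' means, otherwise the boundary identifications do not line up.
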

\begin{proof}
	b)$\implies$a). This is in the proof of Theorem 3.12 a) \cite{dL22}. Let $ \psi_1,\psi_2,\varphi_1,\varphi_2 $ be as in b). We first assume that $ \psi_1,\varphi_2 $ are analytic. Define the holomorphic function $$ g(t,s):= \langle \Delta_H^{-i\bar{s}}\psi_1\otimes \psi_2,T( \varphi_1\otimes \Delta_H^{-it}\varphi_2 ) \rangle.$$ Then by b), \begin{align*}
		g(t+\frac{i}{2},s)&=g((t-\frac{i}{2})+i,s) = \langle \psi_2\otimes S_H \Delta_H^{-i(t-i/2)}\varphi_2,T( S_H\Delta_H^{-i\bar{s}}\psi_1 \otimes\varphi_1) \rangle\\
		&=\langle \psi_2\otimes \Delta_H^{-it}J_H\varphi_2,T( \Delta_H^{-is-1/2}J_H\psi_1 \otimes\varphi_1) \rangle
	\end{align*}
	Let $ s= t+i/2 $, we obtain $ g(t+\frac{i}{2},t+\frac{i}{2}) = \langle \psi_2\otimes \Delta_H^{-it}J_H\varphi_2,T( \Delta_H^{-it}J_H\psi_1 \otimes\varphi_1) \rangle $ which is precisely the crossing symmetric condition. For general $ \psi_1,\varphi_2 $, we apply the density argument from Lemma 3.7 \cite{dL22}.
	
	a)$\implies$c). This is in the proof of Theorem 3.22 a) \cite{dL22}. For any $ \xi,\varphi_2\in \HH $, $ \varphi_1 \in H+iH $, $ \varphi_3 $ analytic,
	\begin{align*}
		\langle \xi, a(S_H\varphi_1)T(\varphi_2\otimes \varphi_3)\rangle &= \langle S_H\varphi_1 \otimes \xi, T(\varphi_2\otimes \varphi_3) \rangle = \langle \Delta_H^{-1/2}J_H\varphi_1 \otimes \xi, T(\varphi_2\otimes \Delta_H^{1/2} J_H S_H^*\varphi_3) \rangle\\
		(\text{by a)})&=\langle \xi \otimes J_H J_HS^*_H\varphi_3, T(J_HJ_H \varphi_1\otimes \varphi_2) \rangle = \langle \xi, a_r(S^*_H\varphi_3)T( \varphi_1\otimes \varphi_2) \rangle,
	\end{align*}
	hence $ a(S_H\varphi_1)T(\varphi_2\otimes \varphi_3) = a_r(S^*_H\varphi_3)T( \varphi_1\otimes \varphi_2) $. For a general $\varphi_3\in \HH$, we simply note that analytic vectors form a core of $ S_H^* $.
	
	c)$\implies$d). Let $ x\in H+iH $, $ y,w\in \HH $ and $ z\in H'+iH' $, then by c)
	$$ \langle x \otimes w,T(y\otimes z) \rangle = \langle w,a(S_HS_Hx)T(y\otimes z)\rangle = \langle w,a_r(S_H^*z)T(S_Hx\otimes y)\rangle = \langle w\otimes S_H^*z,T(S_Hx\otimes y)\rangle. $$
	On the one hand, $ \langle x \otimes w,T(y\otimes z) \rangle  = \langle a(y)T(x\otimes w),z\rangle $. On the other hand, $ \langle w\otimes S_H^*z,T(S_Hx\otimes y)\rangle = \langle S_H^*z,a(w)T(S_Hx\otimes y)\rangle $. Therefore, we have $ \langle a(y)T(x\otimes w),z\rangle = \langle S_H^*z,a(w)T(S_Hx\otimes y)\rangle $, hence $ a(y)T(x\otimes w)= S_H a(w)T(S_Hx\otimes y)$ by the density of $ H'+iH' $. 
	
	d)$\implies$b). Again by the density argument from Lemma 3.7 \cite{dL22}, it suffices to prove b) for analytic $ \varphi_2 $. Assume $ \psi_1,\varphi_1\in \HH $ and $ \psi_2\in H+iH $ and $ \varphi_2 $ analytic. Then the function $f(t) = \langle \psi_1\otimes \psi_2, T(\varphi_1\otimes \Delta_H^{it}\varphi_2)\rangle$ extends to $\overline{\SSS}_{-1}$ and
	\begin{align*}
		f(t-i) &= \langle \psi_1\otimes \psi_2, T(\varphi_1\otimes \Delta_H^{it}\Delta_H\varphi_2)\rangle = \langle a(\varphi_1)T(\psi_1\otimes \psi_2), \Delta_H\Delta_H^{it}\varphi_2\rangle\\
		&=  \langle S_H\Delta_H^{it}\varphi_2,S_Ha(\varphi_1)T(\psi_1\otimes \psi_2)\rangle\\
		\intertext{applying d)}&= \langle S_H\Delta_H^{it}\varphi_2,a(\psi_2)T(S_H\psi_1\otimes \varphi_1)\rangle = \langle \psi_2\otimes S_H\Delta_H^{it}\varphi_2,T(S_H\psi_1\otimes \varphi_1)\rangle.
	\end{align*}
\end{proof}

\begin{thm}[Corollary 3.23, Proposition 3.25 \cite{dL22}]\label{modular data on frakA}
	Let $H\subseteq \HH$ be a standard subspace, and $ T\in \mathcal{T}_{\geq}(H) $ be a compatible twist. The following are equivalent.
	\begin{enumerate}[a)]
		\item The vacuum vector $\Omega$ is cyclic and separating for $\mathcal{L}_T(H)$.
		\item $T$ is braided and crossing symmetric.
	\end{enumerate}
	Assume one of the above condition holds. Let $ \varphi_\Omega =\langle \Omega,\cdot\Omega\rangle$ be the vacuum state on $ \mathcal{L}_T(H) $. Denote $ \Delta_\Omega $ the modular operator for $ (\LL_{T}(H),\varphi_\Omega) $, $ J_\Omega $ the modular conjugation, and $ S_\Omega = J_\Omega \Delta^{1/2}_{\Omega} $ the involution. Then $ \Delta_\Omega $, $ J_\Omega $ are the second quantization of $ \Delta_H $ and $ J_H $. That is, $ \Delta_\Omega^{it} $  and $ J_\Omega $ are (anti)-unitary operator on $\FF_T(\HH)$ satisfying
	\begin{enumerate}[i)]
		\item $ \Delta_\Omega^{it}[ \xi_1\otimes \cdots\xi_n ] = [ \Delta_H^{it}\xi_1\otimes \cdots \Delta_H^{it}\xi_n ],\quad \forall \xi_i\in \HH $.
		\item $ J_\Omega[ \xi_1\otimes \cdots\xi_n ] = [ J_H\xi_n\otimes \cdots J_H\xi_1 ] ,\quad \forall \xi_i\in \HH $.
	\end{enumerate}
\end{thm}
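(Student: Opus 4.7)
The plan is to prove the equivalence (a)$\Leftrightarrow$(b) by, in the non-trivial direction (b)$\Rightarrow$(a), constructing candidate second-quantised modular data on $\FF_T(\HH)$ and verifying they coincide with the Tomita--Takesaki objects for the vacuum state; and in the converse direction, by extracting the two relations on $T$ from the KMS condition applied to low-degree Wick monomials. Cyclicity of $\Omega$ for $\mathcal{L}_T(H)$ is in any case automatic: since $X_T(\xi)\Omega=\xi$ for $\xi\in H+iH$, and iterating the $T$-Wick recursion (Theorem~\ref{Wick product}) produces every elementary tensor $\xi_1\otimes\cdots\otimes\xi_n\in\HH_{T,n}$ from $\mathcal{P}_T(H)\Omega$, the subspace $\mathcal{L}_T(H)\Omega$ is dense. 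Hence the substantive content is the separating property together with the explicit second-quantisation formulas (i), (ii).

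For (b)$\Rightarrow$(a), I would define $\widehat{\Delta}^{it}$ and $\widehat{J}$ on $\FF_T(\HH)$ directly by the right-hand sides of (i) and (ii). The compatibility assumption $[T,\Delta_H^{it}\otimes\Delta_H^{it}]=0$ together with the braided property yield that $\Delta_H^{it}\otimes\cdots\otimes\Delta_H^{it}$ commutes with every $P_{T,n}$, so $\widehat{\Delta}^{it}$ descends to a unitary on each $\HH_{T,n}$. The analogous compatibility needed for the tensor-reversing antiunitary $\widehat{J}$ falls out of crossing symmetry by analytic continuation to the imaginary boundary, essentially the identity (\ref{crossing symmetry 2}). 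Setting $\widehat{S}:=\widehat{J}\,\widehat{\Delta}^{1/2}$ then gives, on analytic tensors,
$$\widehat{S}(\xi_1\otimes\cdots\otimes\xi_n)=S_H\xi_n\otimes\cdots\otimes S_H\xi_1.$$
The heart of the argument is the identity
$$\widehat{S}\,X\Omega=X^*\Omega,\qquad X\in\mathcal{P}_T(H),$$
which I would prove by induction on the degree of $X$: writing $X=X_T(\xi)Y$, applying the $T$-Wick recursion to $X\Omega$, and commuting $S_H$ through each contraction term by Lemma~\ref{all equivalent form of CS}(d), together with the Yang--Baxter equation used to rearrange the twisted tensors produced by $P_{T,n}$. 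Once this identity holds on the dense set $\mathcal{P}_T(H)\Omega$, it forces $\widehat{S}$ to extend the Tomita involution of $(\mathcal{L}_T(H),\varphi_\Omega)$; the separating property of $\Omega$ and the identifications $\Delta_\Omega=\widehat{\Delta}$, $J_\Omega=\widehat{J}$ then follow from uniqueness of the polar decomposition $S_\Omega=J_\Omega\Delta_\Omega^{1/2}$.

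For (a)$\Rightarrow$(b), assuming $\Omega$ is separating supplies $S_\Omega,\Delta_\Omega,J_\Omega$ by Tomita--Takesaki. The KMS relation $\varphi_\Omega(AB)=\varphi_\Omega(B\sigma_{-i}^{\Omega}(A))$ applied to $A=X_T(\xi)$, $B=X_T(\eta)$, combined with the one-particle computation $\varphi_\Omega(X_T(\xi)X_T(\eta))=\langle S_H\xi,\eta\rangle$, pins down $\sigma_t^\Omega$ on the one-particle level as the second quantisation of $\Delta_H^{it}$. Testing the same relation on four-factor products $X_T(\xi_1)X_T(\xi_2)X_T(\xi_3)X_T(\xi_4)$ and isolating the two-particle component of the Wick expansion reproduces the boundary identity (\ref{crossing symmetry 2}), i.e.\ crossing symmetry. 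Separately, evaluating $S_\Omega$ on the three-particle tensor part of $X_T(\xi_1)X_T(\xi_2)X_T(\xi_3)\Omega$ via two different Wick reorderings and demanding single-valuedness of $S_\Omega$ forces $T_1T_2T_1=T_2T_1T_2$ as an identity on $\HH^{\otimes 3}$, i.e.\ braiding.

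The main obstacle is the inductive verification of $\widehat{S}\,X\Omega=X^*\Omega$. The bookkeeping is delicate because the $T$-Wick expansion of $X_T(\eta_1)\cdots X_T(\eta_n)\Omega$ is a sum over incomplete pair partitions weighted by $T$-dependent contraction operators, whereas the adjoint both reverses the order of the $X_T(\eta_j)$'s and replaces each $\eta_j$ by $S_H\eta_j$. Matching the two sides term by term requires invoking crossing symmetry precisely when an annihilation is moved past a contraction, and the Yang--Baxter equation precisely when two creation operators are transposed through the twisting; organising this alternation cleanly --- ideally via a combinatorial argument on chord diagrams labelled with $S_H$'s --- is the technical heart of the proof.
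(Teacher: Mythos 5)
First, a point of comparison: the paper does not prove this statement at all --- it is imported verbatim from \cite{dL22} (Corollary 3.23 and Proposition 3.25 there), so there is no internal proof to measure you against. Your outline is broadly the strategy of the cited source (build the second-quantised candidates $\widehat J$, $\widehat\Delta$, verify $\widehat S\,X\Omega=X^*\Omega$ on the polynomial algebra, then identify the modular data). Two remarks before the main criticism. (1) Be careful not to invoke Theorem~\ref{Wick product}: in this paper the Wick map $\varPhi$ is \emph{defined} using the separating property you are trying to prove, and the formula is only established for finite-dimensional $\HH$, whereas the present theorem is for arbitrary standard subspaces. For cyclicity and for your induction you should only use the elementary recursion \eqref{recurrence of Wick product}, which needs no separating hypothesis. (2) The separating property itself does follow from your key identity, but not "from uniqueness of the polar decomposition": the correct argument is that $\widehat S$ is a globally defined closed antilinear operator agreeing with $X\Omega\mapsto X^*\Omega$ on $\mathcal P_T(H)\Omega$, so by Kaplansky density and closedness $X\Omega=0$ forces $X^*\Omega=0$ for every $X\in\mathcal L_T(H)$, and then $X^*(Y^*\Omega)=(YX)^*\Omega=0$ for all $Y$ kills $X^*$ on the dense set $\mathcal L_T(H)\Omega$.

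The genuine gap is in the identification $\Delta_\Omega=\widehat\Delta$, $J_\Omega=\widehat J$, which is what conclusions (i)--(ii) actually assert. From $\widehat S\,X\Omega=X^*\Omega$ you only obtain the inclusion of closed operators $S_\Omega\subseteq\widehat S$; these have the same polar decomposition only if they are equal, i.e.\ only if $\mathcal P_T(H)\Omega$ is a core for $\widehat\Delta^{1/2}$. Uniqueness of the polar decomposition cannot convert an inclusion into an equality. The standard way to close this (and the route taken in \cite{dL22}, and in Shlyakhtenko's untwisted case) is to introduce the \emph{right} field operators $X_{T,r}(\eta)=a_{T,r}^*(\eta)+a_{T,r}(S_H^*\eta)$ for $\eta\in H'+iH'$, prove $[X_T(\xi),X_{T,r}(\eta)]=0$ --- this commutation is where crossing symmetry and braiding do their real work, not merely in making $\widehat J$ descend to $\HH_{T,n}$ --- and deduce that $\Omega$ is cyclic for a subalgebra of $\mathcal L_T(H)'$, which yields $\widehat S\subseteq F_\Omega^{\,*}=S_\Omega$. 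Your sketch never touches the commutant, so as written the proof of (i)--(ii) does not go through. Relatedly, your necessity direction (a)$\Rightarrow$(b) is only heuristic: extracting the Yang--Baxter identity on all of $\HH^{\otimes 3}$ from "single-valuedness of $S_\Omega$ on three-particle vectors" needs an actual construction of the elements that would violate well-definedness when braiding fails, which is the content of the harder half of \cite{dL22}'s Theorem 3.22.
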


In particular, this implies that $ \xi\in H $ is an eigenvector of $ \Delta_H $ if and only if $ X_T(\xi) $ is an eigenoperator of $ \varphi_\Omega $.

For the rest of the paper, unless further stated we always assume that
\begin{enumerate}[a)]
	\item $\HH$ is finite dimensional;
	\item The compatible twist $T\in \mathcal{T}_{>}(H)$ is braided and crossing symmetric (in particular the vacuum state $\varphi_\Omega$ is a faithful normal state on $ \mathcal{L}_T(H) $);
	\item $ \|T\|=q<1 $.
\end{enumerate}
In particular, the involution $ S_H $ is now a bounded anti-linear operator on the whole space $\HH = H+iH=H'+iH'$. 

Let $e_1,\cdots,e_d$ be a (not necessarily orthogonal) basis of $\HH$, then $ X_T(e_1),\cdots,X_T(e_d) $ generate the von Neumann algebra $ \mathcal{L}_T(H) $. Our main goal is to study the conjugate system of $ (X_T(e_1),\cdots,X_T(e_d)) $ in $(\mathcal{L}_T(H),\varphi_\Omega)$.

\subsection{Examples of braided and crossing symmetric twists}
For finite dimensional $\HH$, while there are a lot of $T\in B(\HH\otimes \HH)$ satisfying the Yang-Baxter equation, many of them are not crossing symmetric, (for example $ T = \mbox{id}_{\HH\otimes \HH} $). In this subsection, we give some examples of compatible braided and crossing symmetric twists that are not the $q$-scaled tensor flip $qF$.

When $H\subset \HH$ is tracial (i.e. $ \HH $ is the complexification of $H$, or equivalently the spectrum $\text{Sp}(\Delta_H)=\{1\}$), one can find a lot of crossing symmetric ($t^{kl}_{ij}=t^{lj}_{ki}$) twists when $d=\text{dim}\;\HH$ is large (say $d\geq 3$). When $d=2 $, there are precisely four continuous families of crossing symmetric braided twists up to a change of basis (found with the help of computer algebra).
\begin{eg}
	Fix an orthonormal basis $\{e_1,e_2\}$ of $ H $ and denote $ T_{ij}^{kl} = \langle e_{k}\otimes e_{l},T(e_{i}\otimes e_{j}) \rangle $. Then $T$ is self-adjoint and crossing symmetric if and only if
	$$\begin{cases}
		T_{11}^{11}=q_1\in \RR,\quad T_{22}^{22}=q_2\in \RR\\ T_{12}^{21}=T_{21}^{12}=q_{12}\in \RR,\\
		T_{12}^{22}=T_{21}^{22}=T_{22}^{12}=T_{22}^{21}=a\in \RR,\\
		T_{21}^{11}=T_{12}^{11}=T_{11}^{21}=T_{11}^{12}=b\in \RR,\\
		T_{11}^{22}=T_{22}^{11}=T_{12}^{12}=T_{21}^{21}=c\in \RR.
	\end{cases}$$
	If $T$ satisfies the Yang-Baxter equation, then up to a change of basis of $H$, $T$ is of one of the following forms (with the basis $\{e_1\otimes e_1,e_1\otimes e_2,e_2\otimes e_1,e_2\otimes e_2\}$)
	$$ \begin{pmatrix}
		q_1& 0&0&0\\
		0&0&q_{12}&0\\
		0&q_{12}&0&0\\
		0&0&0&q_{2}
	\end{pmatrix}, \begin{pmatrix}
	q_{1}& 0&0&c\\
	0&c&-q_{1}&0\\
	0&-q_{1}&c&0\\
	c&0&0&q_{1}
\end{pmatrix}, \begin{pmatrix}
q_{1}& 0&0&\frac{q_1+q_2}{2}\\
0&\frac{q_1+q_2}{2}&\varepsilon\sqrt{\frac{q_1^2+q_2^2}{2}}&0\\
0&\varepsilon\sqrt{\frac{q_1^2+q_2^2}{2}}&\frac{q_1+q_2}{2}&0\\
\frac{q_1+q_2}{2}&0&0&q_{2}
\end{pmatrix},$$
where $\varepsilon = 1 \text{ or} -1$.
\end{eg}

For nontracial cases, the crossing symmetric condition is stricter, and one should expect less crossing symmetric braided twists. The following is the $q_{ij}$-twist for a finite dimensional (almost periodic) standard subspace $ H\subset \HH $. And $ \mathcal{L}_T(H) $ for $q_{ij}$-twist is also called the mixed $q$-Araki-Woods algebra. Note that in general $ q_{ij} $ are allowed to be complex numbers (unlike in the tracial cases).
\begin{eg}
	Let $ H\subset \HH $ be finite dimensional, and $\{ e_1,\cdots,e_d\} $ is an orthonomal basis of eigenvectors of $ \Delta_H $. We assume that $ \{e_1,\cdots,e_d\} $ is invariant under $ J_H $, so that $ J_H e_i = e_{\bar{i}} $ for some $ 1 \leq \bar{i}\leq d $. Then the $q_{ij}$-twist is the operator
	$$ T(e_i \otimes e_j) = q_{ij} e_j\otimes e_i, \quad \forall 1\leq i,j\leq d. $$
	The self-adjointness of $T$ is equivalent to $ q_{ij} = \overline{q_{ji}}\in \CC $. $ T $ is automatically braided and compatible with $ H $, and $ T $ is crossing symmetric if and only if for all $ 1\leq i,j\leq d $,
	$$ q_{ij} = q_{\bar{j}i} = q_{\bar{i}\bar{j}} = q_{j\bar{i}}\in \CC. $$
	(A special family of examples are when $ q_{ij} = q_{\bar{j}i} = q_{\bar{i}\bar{j}} = q_{j\bar{i}} = q_{ji}=q_{i\bar{j}} = q_{\bar{j}\bar{i}} = q_{\bar{i}j}\in \RR $.)
	In particular, for the tracial cases, by the self-adjointness we always have
	$$ q_{ij} = q_{ji}\in \RR. $$
	In some low dimensional cases (especially when $ \Delta_H $ lack fixed vectors), those are the only possible compatible braided crossing symmetric twists. For example, if $ \HH = 2 $ and the spectrum $ \text{Sp}(\Delta_H)=\{\lambda,\lambda^{-1}\} $ with $\lambda > 1$, then one can easily show that $qF$'s are the only compatible braided crossing symmetric twists. Similarly, more complicated calculations show that if either (a) $ \HH = 3 $ and $ \text{Sp}(\Delta_H) = \{1,\lambda ,\lambda^{-1}\} $ with $ \lambda >1 $, or (b) if $ \HH = 4 $ and $ \text{Sp}(\Delta_H) = \{\lambda_1 ,\lambda_2,\lambda^{-1}_1,\lambda_2^{-1}\}$ with $ \lambda_1>\lambda_2>1 $, then any compatible braided crossing symmetric twist must be a $ q_{ij} $-twist. 
\end{eg}

The next example is borrowed from \cite{dL22}. In fact, a special tracial version of this example also appears in \cite{Kr00} (see also \cite{bikram2021neumann}).
\begin{eg}
	Let $ H\subset \HH $ be a standard subspace, and $$ T = F(A\otimes A), $$
	then $ T $ is a (compatible) braided and crossing symmetric twist only if $A$ is a self-adjoint operator on $ \HH $, and $ [A,\Delta_H^{it}] = [A,J_H] = 0 $. If $H$ is finite dimensional, then we may apply the spectral theorem to $A$ and pick a orthogonal basis of common eigenvectors of $ A $, $\Delta_H$ (invariant under $ J_H $). Under this basis, we just get back to the $q_{ij}$ cases with $ q_{ij} = a_i a_j $ with $ a_i $'s the eigenvalues of $A$.
\end{eg}

\begin{rmk}
	For the braided relation (Yang-Baxter relation) $ T_1 T_2 T_1=T_2 T_1 T_2 $, one can show that $ T $ is braided if and only $ FTF $ is braided. Similarly, for unitary $ U\in B(\HH) $, $ T $ is braided if and only if $ (U\otimes U)T(U^*\otimes U^*) $ is braided. For crossing symmetry, the first claim is no longer true: $ T $ being crossing symmetric does not necessarily imply that $ FTF $ is crossing symmetric. Nevertheless, if $ U $ satisfies $ [U,\Delta_H^{it}] = [U,J_H] = 0 $, then we still have that $ T $ is crossing symmetric if and only if $ (U\otimes U)T(U^*\otimes U^*) $ if crossing symmetric. Therefore, the set of compatible, braided, and crossing symmetric twists is invariant under the conjugation of $ U\otimes U $ for unitary $ U $ commuting with $ \Delta_H^{it}$ and $ J_H $. 
\end{rmk}

So far, all the examples has the property that if $\Lambda= \{\lambda_1,\cdots,\lambda_k\} $ is a subset of eigenvalues of $ \Delta_H $ closed under reciprocal, and $ \KK $ is the direct sum of the eigenspaces of eigenvalues in $\Lambda$ (equivalently $ \KK $ is a $\Delta_H$ and $ J_H $ hyperinvariant subspace), then $ T(\KK \otimes \KK)\subseteq \KK \otimes \KK $. This implies that if we consider the standard subspace $ K :=\KK\cap H\subset \KK $, then $ \mathcal{L}_{T|_{\KK \otimes \KK}}(K) $ is an expected subalgebra of $ \mathcal{L}_{T}(\HH) $. However, this is not true in general, as we have the following example. (See also the remark at the end of Appendix A about this example.)

\begin{eg}\label{twist on matrix}
	Let $ \HH = M_n(\CC)$ with the inner product $\langle x,y\rangle = \text{Tr}(hx^*y)$ where $h\in M_n(\CC)$ is diagonal $ h=\text{diag}\{h_1,\cdots,h_n\}>0 $ and $H = M_n(\CC)_{s.a.}$ be the standard subspace consist of all Hermitian matrices. Let $ m: M_n(\CC)\otimes M_n(\CC) \to M_n(\CC)$ be the multiplication operator $ m(x\otimes y)=xy $. Consider $ m^*: M_n(\CC)\to M_n(\CC)\otimes M_n(\CC) $ the adjoint of $ m $, then we have
	$$ m^*(E_{ij}) = \sum_{k=1}^{n} \frac{1}{h_{k}}E_{ik}\otimes E_{kj}, \quad \forall 1\leq i,j\leq n,$$
	where $ E_{ij} $ is the elementary matrix with the $(i,j)$-th coefficient $1$ and all other coefficients $0$'s. Let $ T := cm^*m: M_n(\CC)\otimes M_n(\CC)\to M_n(\CC)\otimes M_n(\CC) $ for some constant $ c $, then $ T $ is compatible, braided, and crossing symmetric. In fact, $ T= c\text{Tr}(h^{-1})Q_h $ where $Q_h$ is the orthogonal projection onto the subspace $ \text{span}\{ \sum_{k=1}^{n} \frac{1}{h_{k}}E_{ik}\otimes E_{kj}: 1\leq i,j\leq n\} \subseteq M_n(\CC)\otimes M_n(\CC)$. (Therefore $T$ is a twist whenever $ c\geq -1/\text{Tr}(h^{-1})$.) The crossing symmetry of $T$ follows directly from the identity $\langle Y\otimes X,T(Z\otimes W)\rangle= c\langle YX,ZW\rangle $ for all $X,Y,Z,W\in M_n(\CC)$. Indeed,
	\begin{align*}
		&\langle Y\otimes X,(\Delta_H^{i(t+i/2)}\otimes \text{id})T(Z\otimes \Delta_H^{-i(t+i/2)} W)\rangle = c\langle  (\Delta_H^{-i(t-i/2)}Y) X,Z\Delta_H^{-i(t+i/2)} W) \rangle\\
		=&c\langle  X(S_H^*\Delta_H^{-i(t+i/2)} W),(S_H\Delta_H^{-i(t-i/2)}Y) Z \rangle=c\langle X \Delta_H^{-it}J_HW,(\Delta_H^{-it}J_H Y)Z \rangle\\
		=&\langle X \otimes \Delta_H^{-it}J_HW,T(\Delta_H^{-it}J_H Y\otimes Z) \rangle.
	\end{align*}
	To see that $T$ is also braided, we note that $ m^*m = (m\otimes \text{id}) (\text{id}\otimes m^*) = (\text{id}\otimes m)(m^*\otimes \text{id}) $ (this follows from $ (m\otimes \text{id}) (\text{id}\otimes m^*)(X\otimes Y) = X\cdot m^*(Y) = m^*(XY) $), and therefore
	\begin{align*}
		T_1(\text{id}\otimes m^*) &= c(m^*\otimes \text{id})(m\otimes \text{id})(\text{id}\otimes m^*)=c(m^*\otimes \text{id})m^*m\\
		&=c(\text{id}\otimes m^*)m^*m = c(\text{id}\otimes m^*)(\text{id}\otimes m)(m^*\otimes \text{id})\\
		&=T_2(m^*\otimes \text{id}).
	\end{align*}
	And so $ T_1T_2T_1 = c(T_1(\text{id}\otimes m^*))(T_1(\text{id}\otimes m^*))^* =c(T_2(m^*\otimes \text{id}))(T_2(m^*\otimes \text{id}))^* =T_2T_1T_2  $. (Similar calculation shows that in fact $T_1T_2 = T_2T_1$.)
	
	The same computation also works for any left Hilbert algebra $ \frakA $ with bounded multiplication $ m:\frakA\otimes \frakA \to \frakA $ and $ T := cm^*m $. Let $ \HH $ be the closure of $ \frakA $ with $ S_H $ the closure of the involution on $ \frakA $, then $ T,m $ extends to $ \HH\otimes \HH $ and $T$ is compatible braided and crossing symmetric. (In fact, one can show that $\frakA$ with bounded multiplication must be of type $I$. We refer to the standard reference \cite{Takesaki2} to the properties of Hilbert algebras.)
	
	Note that when $c\neq 0$, $\HH = M_n(\CC)$ has no proper $S_H$-invariant subspace $\KK$ such that $ T(\KK\otimes \KK)\subseteq \KK\otimes \KK $. To see this, we note that $ ((\text{Tr}\cdot h)\otimes \text{id})m^* = \text{id} $, hence $ m(\KK\otimes \KK) = ((\text{Tr}\cdot h)\otimes \text{id})T(\KK\otimes \KK) \subseteq  ((\text{Tr}\cdot h)\otimes \text{id})(\KK\otimes \KK) \subseteq \KK $. Thus $ \KK $ is a finite dimensional $C^*$-algebra and must have a unit, hence $ m(\KK\otimes \KK)= \KK $. In particular, we have $ m^*\KK = m^*m(\KK\otimes \KK) = T(\KK\otimes \KK) \subseteq \KK \otimes \KK$, or equivalently $ (1-P_{\KK}\otimes P_{\KK})m^*P_{\KK}=0 $ (with $P_{\KK}$ the orthogonal projection onto $ \KK $). Taking the adjoint, we obtain $ P_{\KK}m(1-P_{\KK}\otimes P_{\KK}) =0$, which implies that $ \KK^\perp $ is an ideal of $ M_n(\CC) $ and so must be either $\{0\}$ or $ M_n(\CC) $. Similarly, if $ \HH = L^2(B(\CC^\infty), \text{Tr}(h\cdot)) = \overline{\frakA}_{\text{Tr}(h\cdot)} $ with $ \text{Tr}(h^{-1})<\infty $, then $ m $ is also bounded. Similar argument shows that if $ \KK\subseteq \HH $ such that $\KK \cap H$ is a standard subspace of $ \KK $, then $ \KK=\{0\} $ or $ \KK=\HH $. (In this case, we still have $m(\KK\otimes \KK)$ being dense in $\KK$, as $ \KK\cap D(S_H) $ is a left Hilbert subalgebra of $ \frakA_{\text{Tr}(h\cdot)} $ hence must be nondegenerate.)
\end{eg}

\subsection{Conjugate system and free Fisher information}

Recall that if $(X_1,\cdots,X_d)$ generates a von Neumann algebra $M$ with faithful normal state $ \varphi $, then the conjugate variables of $ (X_1,\cdots,X_d) $ with respect to the free difference quotient $ \partial_i $'s are vectors $ \Xi_i := \partial_i^*(1\otimes 1)\in L^2(M,\varphi) $ such that for all noncommutative polynomial $p\in \CC\langle x_1,\cdots,x_d\rangle$ on $d$ variables,
$$\langle  1\otimes 1,\partial_ip(X_1,\cdots,X_d)\rangle_{\varphi \otimes \varphi^{op}} = \langle \Xi_i, p(X_1,\cdots,X_d)\rangle_{\varphi}, $$
where $ \partial_i: \CC\langle x_1,\cdots,x_d\rangle\to \CC\langle x_1,\cdots,x_d\rangle\otimes \CC\langle x_1,\cdots,x_d\rangle $ is the free difference quotient $ \partial_i(x_{j_1}\cdots x_{j_k}) := \sum_{m=1}^{k}\delta_{i,j_k}x_{j_1}\cdots x_{j_{m-1}}\otimes x_{j_{m+1}}\cdots x_{j_k} $. (Note that we do not require $ X_i $'s to be self-adjoint. See Definition 3.4 and 3.5 in \cite{Ne17} for details.) The family of vectors $ (\Xi_1,\cdots,\Xi_d) $ is called a conjugate system of $ (X_1,\cdots,X_d) $ with respect to $ \partial_i $'s. The free Fisher information for $ X_1,\cdots,X_d $ is defined to be $ \Phi^*_\varphi(X_1,\cdots,X_n) :=\sum_{m=1}^d\| \Xi_i \|^2_{L^2(M,\varphi)} $.

When we pick $ X_1,\cdots,X_d $ to be self-adjoint and satisfying certain covariance and modular condition, we can also define the quasi-free difference quotients $ \eth_i $'s and the corresponding conjugate variables as in \cite{Ne17}\cite{nelson2015free} (see also \cite{KSW23}). But since our computation is mainly in terms of the free difference quotients, we will postpone the discussion for quasi-free difference quotients until Section $5$. Essentially, free difference quotients and quasi-free difference quotients only differ by an invertible linear transform.

\section{Wick Product and Incomplete Matchings}
For the $ q $-Araki-Woods algebra (i.e. when $ T $ is the scaled tensor flip $ T = qF $), it is known that the Wick product can be written via incomplete matchings (partitions whose blocks are singletons and pairings) (See \cite{effros2003feynman} for the tracial cases):
$$ \varPhi(\xi_1\otimes\cdots\otimes \xi_n) = \sum_{\pi\in \mathcal{P}_{1,2}(n)} (-1)^{|p(\pi)|}q^{\Cr(\pi)} \prod_{\{i,j\}\in p(\pi)}\langle S\xi_{i},\xi_j\rangle \prod_{\{k\}\in s(\pi)}X_{qF}(\xi_k), $$
where $ \Cr(\pi) $ is the crossing number of $ \pi $ when drawn with certain rules. 

In this section, we will generalize this formula to the $ T $-twisted Araki-Woods algebra with a general braided and crossing symmetric twist $ T $ (Theorem \ref{Wick product}):
$$ \varPhi(\xi_1\otimes\cdots\otimes\xi_n) = X\bigg(\sum_{\pi\in \mathcal{P}_{1,2}(n)} (-1)^{|p(\pi)|}W_\pi^T(\xi_1\otimes\cdots\otimes \xi_n)\bigg),$$
where $ W^T_\pi $'s are certain twisted contraction operators with norm bound $ \|W^T_\pi\|\leq C^n \|T\|^{\Cr(\pi)} $, and $ X $ is the linear map $ X(\eta_1\otimes \cdots\otimes \eta_k) = X_T(\eta_1)X_T(\eta_2)\cdots X_T(\eta_k) \in \mathcal{L}_T(H)$ for all $ \eta_i\in \HH $. Before we define those twisted contraction operators $ W^T_\pi $'s, we first need to introduce some convenient notations about incomplete matchings.

\subsection{Incomplete matchings $\mathcal{P}_{1,2}(n)$ and admissible linear orders of pairings}
We recall some basic definitions about partitions. A \textit{partition} $\pi$ of a set $S$ is a collection $ \pi= \{V_1,\cdots,V_s\} $ of disjoint nonempty subsets of $S$ such that $ \cup_{i=1}^{s}V_i = S$. Those $ V_i $'s are called blocks of $\pi$, and the number of blocks in $ \pi $ is denoted by $ |\pi| $. We denote by $ \mathcal{P}(S) $ the set of all partitions of $S$, and when $ S =[n] :=\{1,\cdots,n\}   $, we simply write $ \mathcal{P}([n]) = \mathcal{P}(n) $. A partition $ \pi = \{ V_1,\cdots,V_s\}\in \mathcal{P}(S) $ is called an \emph{incomplete matching} if for all $1\leq i\leq s$, $ |V_i|=1 \mbox{ or } 2 $, i.e. each block of $\pi$ is either a pairing or a singleton. The set of all incomplete matchings of $S$ is denoted by $ \mathcal{P}_{1,2}(S) $ (similarly $ \mathcal{P}_{1,2}([n]) = \mathcal{P}_{1,2}(n) $). We denote by $p(\pi):= \{ V\in \pi: |V| = 2 \}   $ the set of all the pairings of $ \pi $, and $ s(\pi) := \{ V\in \pi: |V|=1 \} $ the set of all singletons of $\pi$. For $ S=\varnothing $, we set $ \mathcal{P}(S) =\mathcal{P}(0) = \{ 1^0\} $ where $ 1^0 := \{\varnothing\} $ is the trivial partition of the empty set $ [0]:=\varnothing $.

If $ f:S\to S' $ is a bijection between two sets, we also denote $ f:\mathcal{P}(S)\to \mathcal{P}(S') $ the bijection that sends $ \pi=\{V_1,\cdots,V_s\}\in \mathcal{P}(S)$ to $ f(\pi):= \{f(V_1),\cdots,f(V_s)\} $. When $ S $ is an ordered set with the order preserving counting map $ c_S:[|S|]\to S $, we always have the bijection $c_S: \mathcal{P}_{1,2}(|S|) \to \mathcal{P}_{1,2}(S) $. But since we will frequently deal with partitions on subsets of $[n]$, we sometimes distinguish between $ \mathcal{P}_{1,2}(S) $ and $ \mathcal{P}_{1,2}(|S|) $, and will mention it whenever we made such an identification.

\begin{defn}
	Let $ \pi \in \mathcal{P}_{1,2}(n) $ be an incomplete matching. The nested (strict) partial order $ < $ on $ p(\pi) $ is defined as: For $ V=\{ i,j \}, U=\{k,l\} \in p(\pi)$, $$ V<U,\quad  \text{ if } i<k<l<j.  $$
	A (strict) linear order $ \varphi $ on $ p(\pi) $ is called admissible, if it is a linear extension of the nested partial order $ < $, that is, if $V<U$ then $ V<_\varphi U $.
	
	The left standard order $ L_\pi = L $ is the admissible linear order: for $ V=\{ i,j \}, U=\{k,l\} \in p(\pi)$ with $ i<j $ and $ k<l $,
	$$ V<_L U,\quad \text{if } i<k. $$
	Similarly, we can define the right standard order
	$$ V<_R U,\quad \text{if } j>l. $$
\end{defn}

\begin{eg}
	Let $ \pi = \{  \{1,4\},\{2,6\},\{3,5\}\}\in \mathcal{P}_{1,2}(6) $, then $ \{2,6\}< \{3,5\} $. There are three admissible orders of $ p(\pi) $: a) the left standard order $ \{1,4\}<_L \{2,6\}<_L\{3,5\} $; b) the right standard order $ \{2,6\}<_R \{3,5\}<_R \{1,4\} $; c) and another order $ \{2,6\}<_\varphi \{1,4\}<_\varphi \{3,5\} $.
\end{eg}

\subsection{Crossing numbers $\Cr(\pi)$ for Wick product}
In the formula of Wick product of $q$-Gaussian algebras, \cite{effros2003feynman} shows that the correct way of counting the crossing numbers (w.r.t. the left standard order $L$) of a pairing $ V=\{i,j\}\in p(\pi) $ of $\pi\in \mathcal{P}_{1,2}(n)$ is
\begin{align*}
	&\Cr_L(V):= (j-i-1) - |\{ U=\{k,l\}\in p(\pi): k<i<l<j\} | 
	\\ =&| \{ \{k\}\in s(\pi): i<k<j \} |+| \{ U=\{k,l\}\in p(\pi):  i<k<j<l \} |+ 2| \{ U\in p(\pi): V<U \} |,
\end{align*}
i.e. the number of elements between $ i $ and $j$ that is not an end point of a pairing $U$ 'crossing' $ V $ from the right of $ V $.
We may generalize this to a general admissible order $ \varphi: p(\pi)\to [|p(\pi)|] $. For $ V=\{i,j\}\in p(\pi) $ with $ i<j $, we define
\begin{align*}
	\Cr_{\varphi}(V) :=& (j-i-1) - | \{U=\{k,l\}\in p(\pi): k<i<l<j, U<_\varphi V \}|\\&-| \{U=\{k,l\}\in p(\pi): i<k<j<l, U<_\varphi V \}|\\
	=& | \{U=\{k,l\}\in p(\pi): k<i<l<j, V<_\varphi U\}|+| \{U=\{k,l\}\in p(\pi): i<k<j<l, V<_\varphi U \}|\\
	&+ | \{ \{k\}\in s(\pi): i<k<j \} | + 2| \{ U\in p(\pi): V<U \} |.
\end{align*} 
The total crossing number $ \Cr(\pi) $ is then the sum over all pairings $ V\in p(\pi) $,
\begin{align*}
	\Cr(\pi) :=& \sum_{V\in p(\pi)}\Cr_\varphi(V)\\
	=& \sum_{\{i,j\}\in p(\pi)}| \{ \{k\}\in s(\pi): i<k<j \} |+2| (U,V): U,V\in p(\pi), V<U |\\ &+| \{ (U,V): U=\{i,j\},V=\{k,l\}\in p(\pi), i<k<j<l \} |,
\end{align*}
which is independent of the choice of the admissible linear order $ \varphi $.

The meaning of those crossing numbers is clear when we represent $ \pi $ and the order $ \varphi $ using diagram drawn with the following rules.

\begin{enumerate}[(1)]
	\item List $\{1,\cdots,n\}$ in a staight line.
	\item Let $ V_1=\{i_1,j_1\}<_\varphi \cdots <_\varphi V_{s}=\{i_s,j_s\} $ be the pairings of $ \pi $ with $ i_m<j_m $. For each $1\leq m\leq s$, $ i_m $ is connected to $ j_m $ with height $m$.
	\item Singletons are drawn with straight lines to the top.
\end{enumerate}

\begin{eg}\label{eg1}
	$ \pi = \{\{1,4\},\{2,10\},\{5,8\},\{7,9\},\{3\},\{6\}\} $ with the left standard order is represented as
\begin{center}
	\crossing{1}{10}{1/4,2/10,5/8,7/9}{3,6}{5}
\end{center}
We have $\Cr_L(\{1,4\}) = 2$, $ \Cr_L(\{2,10\}) = 6 $, $ \Cr_L(\{5,8\})=2 $, and $ \Cr_L(\{7,9\}) = 0 $.

Similarly, if we choose the admissible order $$ \{ 2,10 \}<_\varphi \{5,8\}<_\varphi \{1,4\}<_\varphi \{7,9\},$$
then the diagram is
\begin{center}
	\crossing{1}{10}{2/10,5/8,1/4,7/9}{3,6}{5}
\end{center}
And $ \Cr_\varphi(\{2,10\}) = 7 $, $ \Cr_{\varphi}(\{5,8\}) = 2 $, $ \Cr_{\varphi}(\{1,4\})=1 $, and $ \Cr_{\varphi}(\{7,9\})=0 $.
\end{eg}
For a pairing $ V=\{i,j\} \in p(\pi)$, $ \Cr_\varphi(V) $ is the number of crossings on the horizontal part of the chord $\{i,j\}$. And thus $ \Cr(\pi) $ is the total number of crossings in the diagram which is independent of $ \varphi $.

For a fixed order $ \varphi $, let $ V_1=\{i_1,j_1\}<_\varphi \cdots <_\varphi V_{s}=\{i_s,j_s\} $ be the pairings of $ \pi $ with $ i_m<j_m $. Denote $ r_k(m) = r^\varphi_k(m) $ the total number of $ i_t $'s and $ j_t $'s that are less than $ m $ with $ t<k $. Then it is clear from the diagram that the crossing numbers can also be written as
\begin{equation}\label{explicit formula of crossing numbers}
	\Cr_\varphi( V_k ) = (j_k - i_k-1)+ r_k(i_k)-r_k(j_k),
\end{equation}
namely the number of elements between the two end points of $V_k$ that are not the end point of $V_s$ for some $ s<k $. 

\subsection{The twisted contraction operator $ W^T_\pi $}
We now turn to discuss the contraction operators.
\begin{defn}
	Let $ H$ be a standard subspace of a finite dimensional Hilbert space $ \HH $ with involution $S_H$, for each $ k\geq 2 $ and $ 1\leq i<k $, we define the \emph{$i$-th-contraction operator} $ C_i:\HH^{\otimes k}\to \HH^{\otimes k-2} $ as:
	$$ C_i(\xi_1\otimes \cdots\otimes\xi_k)= \langle S_H\xi_i,\xi_{i+1}\rangle\xi_1\otimes \cdots \xi_{i-1}\otimes \xi_{i+2}\otimes\cdots\otimes \xi_k ,\quad \forall \xi_j\in \HH. $$
	$ C_i:\HH^{\otimes k}\to \HH^{\otimes k-2}  $ is a bounded linear operator since $ \HH $ is finite dimensional, and $ \|C_i\|=\|C_1\|\leq (\dim \HH)^{1/2}\cdot \|S_H\| $ (since one have $ C_1^*\Omega = \sum_{i=1}^{d}S_H^*e_i\otimes e_i $ with $ \{e_i\}_{i} $ an orthonormal basis of $\HH$).
\end{defn}
\begin{rmk}
	In general, when $ \HH $ is infinite dimensional, $ C_1 $ is only defined on $ (H+iH)\otimes_{\text{alg}} \HH^{k-1} $.
\end{rmk}

Using the contraction operators, we can rewrite the crossing symmetric condition without indicating the vectors:
\begin{cor}\label{C1T2=C2T1}
	If $T$ is a twist on finite dimensional $\HH$, then $T$ is crossing symmetric if and only if
	$$ C_1T_2 = C_2T_1. $$
	In particular, by taking tensor product with identity on the left, we have $ C_kT_{k+1}=C_{k+1}T_{k} $ for a crossing symmetric twist $T$ and $k\geq 1$.
\end{cor}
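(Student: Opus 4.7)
The plan is to show that the operator equation $C_1T_2 = C_2 T_1$ on $\HH^{\otimes 3}$ is exactly a restatement of condition c) of Lemma \ref{all equivalent form of CS}, and then extend to general $k$ by tensoring with identities.

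First I would evaluate both sides on an arbitrary simple tensor $\xi_1 \otimes \xi_2 \otimes \xi_3$. For the left hand side, writing $T(\xi_2\otimes \xi_3)$ as a sum of simple tensors $\eta \otimes \zeta$, the definition of $C_1$ gives
\begin{equation*}
C_1 T_2(\xi_1\otimes \xi_2\otimes \xi_3) = \langle S_H\xi_1, \eta\rangle\,\zeta = a(S_H\xi_1)\,T(\xi_2\otimes \xi_3),
\end{equation*}
where the last identity just uses $a(h)(\eta\otimes \zeta) = \langle h,\eta\rangle \zeta$. For the right hand side, writing $T(\xi_1\otimes \xi_2) = \eta'\otimes \zeta'$ and applying $C_2$ gives $\langle S_H\zeta',\xi_3\rangle \eta'$. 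Using the defining identity for the adjoint of the anti-linear operator $S_H$, namely $\langle S_H\zeta',\xi_3\rangle = \langle S_H^*\xi_3,\zeta'\rangle$, one recognizes this as
\begin{equation*}
C_2 T_1(\xi_1\otimes \xi_2\otimes \xi_3) = a_r(S_H^*\xi_3)\,T(\xi_1\otimes \xi_2).
\end{equation*}

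Hence $C_1T_2=C_2T_1$ on $\HH^{\otimes 3}$ is literally the condition
\begin{equation*}
a(S_H\xi_1)\,T(\xi_2\otimes \xi_3) = a_r(S_H^*\xi_3)\,T(\xi_1\otimes \xi_2), \qquad \xi_1,\xi_2,\xi_3\in \HH.
\end{equation*}
Under the standing assumption that $\HH$ is finite dimensional with $H$ a standard subspace, we have $H+iH = H'+iH' = \HH$, so this is precisely condition c) of Lemma \ref{all equivalent form of CS} with the full domain. The equivalence with crossing symmetry therefore follows immediately from that lemma.

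For the ``in particular'' statement, I would note that the map $C_1T_2:\HH^{\otimes 3}\to \HH$ and $C_2T_1:\HH^{\otimes 3}\to \HH$ both act nontrivially only on positions $1,2,3$ (after contraction the single remaining tensor factor occupies the same slot in the codomain). Tensoring both sides of $C_1T_2=C_2T_1$ on the left with $\mathrm{id}^{\otimes (k-1)}$ and on the right with the appropriate identity on the trailing factors yields $C_kT_{k+1}=C_{k+1}T_k$ on $\HH^{\otimes n}$ for any $n\ge k+2$. There is no genuine obstacle in this argument; the only subtlety worth checking carefully is the anti-linear adjoint identity $\langle S_H\zeta',\xi_3\rangle = \langle S_H^*\xi_3,\zeta'\rangle$, which produces the expected $a_r(S_H^*\xi_3)$ (not $a_r(S_H\xi_3)$) on the right hand side.
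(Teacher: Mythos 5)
Your proof is correct and follows essentially the same route as the paper: reduce both $C_1T_2$ and $C_2T_1$ to the expressions $a(S_H\xi_1)T(\xi_2\otimes\xi_3)$ and $a_r(S_H^*\xi_3)T(\xi_1\otimes\xi_2)$ and then invoke Lemma \ref{all equivalent form of CS}(c). The only cosmetic difference is that the paper establishes $C_2T_1(\xi_1\otimes\xi_2\otimes\xi_3)=a_r(S_H^*\xi_3)T(\xi_1\otimes\xi_2)$ by pairing against a test vector, whereas you decompose $T(\xi_1\otimes\xi_2)$ into simple tensors and apply the anti-linear adjoint identity $\langle S_H\zeta',\xi_3\rangle=\langle S_H^*\xi_3,\zeta'\rangle$ directly, which is slightly more streamlined.
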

\begin{proof}
	For $\xi_1,\cdots,\xi_3\in \HH$, we have
	$$  C_1T_2(\xi_1\otimes \xi_2\otimes \xi_3)= C_1(\xi_1\otimes T(\xi_2\otimes \xi_3))= a(S\xi_1)T(\xi_2\otimes \xi_3). $$
	On the other hand,
	$$ C_2T_1(\xi_1\otimes \xi_2\otimes \xi_3)=C_2(T(\xi_1\otimes \xi_2)\otimes \xi_3). $$
	So by Lemma \ref{all equivalent form of CS}, it suffices to show that $ C_2(T(\xi_1\otimes \xi_2)\otimes \xi_3)=a_r(S^*\xi_3)T(\xi_1\otimes \xi_2) $. For any $\eta\in \HH$,
	\begin{align*}
		\langle \eta, C_2(T(\xi_1\otimes \xi_2)\otimes \xi_3)\rangle &= a(\eta)C_2(T(\xi_1\otimes \xi_2)\otimes \xi_3)=C_1( a(\eta) T(\xi_1\otimes \xi_2)\otimes \xi_3)\\
		&=\langle Sa(\eta)T(\xi_1\otimes \xi_2),\xi_3\rangle = \langle S^*\xi_3,a(\eta)T(\xi_1\otimes \xi_2) \rangle\\
		&=\langle \eta \otimes S^*\xi_3,T(\xi_1\otimes \xi_2)\rangle = \langle \eta, a_r(S^*\xi_3)T(\xi_1\otimes \xi_2)\rangle.
	\end{align*}
	Therefore, we obtain $ C_2(T(\xi_1\otimes \xi_2)\otimes \xi_3)=a_r(S^*\xi_3)T(\xi_1\otimes \xi_2)  $.
\end{proof}

We also need the following relation for braided twist due to \cite{Bo98}. (This can be proven by induction using the Yang-Baxter equation.)
\begin{lem}[\cite{Bo98} Lemma 3]\label{Tst+2Tst+1 = Ts+1t+2Tst+2}
	If $ s\leq t$, then
	\begin{equation}
		(T_{s}T_{s+1}\cdots T_{t+1})(T_{s}T_{s+1}\cdots T_{t})=T_{s,t+2}T_{s,t+1} = T_{s+1,t+2}T_{s,t+2}=(T_{s+1}T_{s+2}\cdots T_{t+1})(T_{s}T_{s+1}\cdots T_{t+1}),
	\end{equation}
	where we used the short notation $ T_{i}T_{i+1}\cdots T_{j-1} = T_{i,j} $ for $ j\geq i+1 $, and $ T_{i,i} = \text{id} $.
\end{lem}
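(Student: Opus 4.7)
The plan is to prove this lemma by induction on the nonnegative integer $n = t - s$, noting first that the only substantive equality is the middle one $T_{s,t+2}T_{s,t+1} = T_{s+1,t+2}T_{s,t+2}$, since the other two are just the shorthand $T_{i,j} = T_i T_{i+1}\cdots T_{j-1}$ unpacked. For the base case $n = 0$ (so $t = s$), both sides reduce to $T_s T_{s+1} T_s$ versus $T_{s+1}T_s T_{s+1}$, which is precisely the Yang-Baxter equation.

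For the inductive step, assume the claim for all pairs $(s',t')$ with $0 \leq t' - s' < n$. Expand
\[ T_{s,t+2}T_{s,t+1} = T_s\,(T_{s+1}\cdots T_{t+1})\,T_s\,(T_{s+1}\cdots T_t). \]
The key manipulations are as follows. First, use the far-commutativity $[T_i,T_j]=0$ for $|i-j|\geq 2$ (built into the definition $T_i = 1^{i-1}\otimes T \otimes 1^{n-i-1}$) to pull the middle $T_s$ leftward through $T_{s+2}T_{s+3}\cdots T_{t+1}$, producing an adjacent $T_s T_{s+1} T_s$ block. Next, apply Yang-Baxter to that block, replacing it by $T_{s+1}T_s T_{s+1}$, and regroup so the expression becomes $T_{s+1}T_s \cdot T_{s+1,t+2}\,T_{s+1,t+1}$. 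Then invoke the inductive hypothesis on the pair $(s+1,t)$, whose difference is $n-1$, to rewrite the tail as $T_{s+2,t+2}\,T_{s+1,t+2}$. Finally, commute $T_s$ back rightward through $T_{s+2,t+2}$ by far-commutativity, and reassemble using $T_{s+1}\,T_{s+2,t+2} = T_{s+1,t+2}$ and $T_s\,T_{s+1,t+2} = T_{s,t+2}$ to arrive at $T_{s+1,t+2}T_{s,t+2}$, the RHS.

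The main obstacle is purely combinatorial bookkeeping: one must track carefully which adjacent pairs commute and which do not, since among $T_{s+1},T_{s+2},\ldots,T_{t+1}$ only $T_{s+1}$ obstructs the motion of $T_s$, and the single Yang-Baxter move has to be applied at exactly the moment when $T_s$ becomes sandwiched between two copies of $T_{s+1}$. No analytic, operator-theoretic, or representation-theoretic input is required beyond the braid relations and the tensor structure of the $T_i$'s.
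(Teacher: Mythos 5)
Your proof is correct and follows exactly the route the paper indicates (the paper itself gives no proof, merely citing \cite{Bo98} and remarking parenthetically that the identity ``is proven by induction using the Yang-Baxter equation''). The induction on $n=t-s$, with the far-commutativity $[T_i,T_j]=0$ for $|i-j|\geq 2$ used to isolate a single $T_sT_{s+1}T_s$ block, one Yang-Baxter move, an application of the hypothesis at $(s+1,t)$, and a final commutation to reassemble, all check out.
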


Combine crossing symmetry ($C_1T_2 = C_2T_1$) and the lemma above, we can have more relations between $C_i$ and $ T_i $,
\begin{lem}\label{relations between Ci and Ti}
	Let $ T $ be a braided and crossing symmetric twist.
	\begin{enumerate}[a)]
		\item $ T_sC_t = C_tT_{s+2} $ if $ s\geq t\geq 1 $; $ T_sC_t = C_tT_s $ if $ 1\leq s\leq t-2 $; and $ C_{s}T_{s-1}=C_{s-1}T_{s} $.
		\item For all $k\geq 1$,
		\begin{equation}
			C_1T_{2,k+1}C_{k+1}T_{k+2} = C_kC_1T_{2,k+2}.
		\end{equation}
		In particular, for all $m\geq 1$, $ C_mT_{m+1,k+m}C_{k+m}T_{k+m+1} = C_{k+m-1}C_mT_{m+1,k+m+1} $.
		\item For all $ t<i<k<j $,
		\begin{equation}
			C_{i-1}T_{i,j-2}C_tT_{t+1,k}=C_tT_{t+1,k-1}C_{i}T_{i+1,j}.
		\end{equation}
	\end{enumerate}
\end{lem}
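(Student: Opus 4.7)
The plan is to establish the three parts in sequence, relying primarily on Corollary \ref{C1T2=C2T1} (the crossing-symmetry reformulation $C_1 T_2 = C_2 T_1$) and, where needed, on the braided identity of Lemma \ref{Tst+2Tst+1 = Ts+1t+2Tst+2}. The content is largely bookkeeping: indices of $T_s$ and $C_t$ refer to positions in tensor spaces of different arities, and these identities encode how those positions shift under contraction.

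For part a), the first two identities follow from disjoint positions: $C_t$ eliminates tensor factors $t, t+1$, so $T_s$ with $s \leq t-2$ acts entirely on factors strictly to the left of the contracted ones, giving $T_s C_t = C_t T_s$, while $T_s$ with $s \geq t$ on the reduced space corresponds to $T_{s+2}$ on the original, giving $T_s C_t = C_t T_{s+2}$. The third identity $C_s T_{s-1} = C_{s-1} T_s$ is Corollary \ref{C1T2=C2T1} tensored with the identity on $\HH^{\otimes s-2}$ on the left.

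Part b) is then a short chain of rewrites built out of part a):
\begin{align*}
C_1 T_{2,k+1} C_{k+1} T_{k+2}
&= C_1 T_{2,k+1} C_{k+2} T_{k+1} && \text{[part a) with $s = k+2$]} \\
&= C_1 C_{k+2} T_{2,k+1} T_{k+1} && \text{[commuting $C_{k+2}$ past each $T_j$, $2 \leq j \leq k$]} \\
&= C_1 C_{k+2} T_{2,k+2} = C_k C_1 T_{2,k+2},
\end{align*}
where the final equality uses the elementary identity $C_1 C_{k+2} = C_k C_1$ (both sides contract positions $\{1,2\}$ and $\{k+2, k+3\}$), which I would verify directly from the definition of $C_i$. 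The ``in particular'' form for general $m$ then follows by tensoring with the identity on $\HH^{\otimes m-1}$ on the left.

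For part c), I would induct on $j - i$. The base $j = i+2$ collapses $T_{i,j-2}$ to the identity, and after matching parameters the identity becomes $C_{i-1} C_t T_{t+1,i+1} = C_t T_{t+1,i} C_i T_{i+1}$, which is precisely the ``in particular'' form of b) with $m = t$ and $k = i - t$. For the inductive step, the plan is to rearrange the LHS by peeling a factor off $T_{i, j-2}$, using part a) to commute the isolated $T$ past the other $T$'s (whose indices are separated by at least $2$) and past $C_t$ (noting $i - 1 \geq t$), and then invoking the elementary $C$-commutation $C_a C_b = C_b C_{a+2}$ for $a \geq b$ (verified from the definition). The braided identity Lemma \ref{Tst+2Tst+1 = Ts+1t+2Tst+2} enters when the index of the peeled $T$ lies within the range of $T_{t+1, k}$. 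The main obstacle will be the careful bookkeeping of edge cases (notably $i = t+1$ and $k = i+1$, where the indices of the two blocks are adjacent and part a) does not directly commute them); these cases are handled by direct reduction to $C_s T_{s-1} = C_{s-1} T_s$ and the base instance of the induction.
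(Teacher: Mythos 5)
Parts a) and b) of your proposal are correct and essentially identical to the paper's argument; for b) you traverse the chain in the opposite direction, but the moves (the third identity of a), the disjoint commutations, and $C_1C_{k+2}=C_kC_1$) are the same. Part c) is where you take a genuinely different route: you propose induction on $j-i$, whereas the paper gives a single direct chain of rewrites, starting from $C_tT_{t+1,k-1}C_iT_{i+1,j}$ (with $t=1$), splitting $T_{2,k-1}=T_{2,i}T_{i,k-1}$, pushing $T_{i,k-1}$ through $C_i$ with a), invoking the braided identity of Lemma~\ref{Tst+2Tst+1 = Ts+1t+2Tst+2} once, applying b), commuting disjoint $T$-blocks, and reassembling. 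The advantage of the paper's chain is that it treats all $t<i<k<j$ uniformly: degenerate blocks $T_{a,a}=\mathrm{id}$ require no separate cases.

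The inductive route is salvageable but, as sketched, has a real gap in the edge-case analysis. Your base case $j=i+2$ (which forces $k=i+1$ and gives the ``in particular'' form of b) with $m=t$) is right. But the inductive step is underspecified. If you peel $T_i$ from the left of $T_{i,j-2}$, rewrite $C_{i-1}T_i = C_iT_{i-1}$, commute $T_{i-1}$ past $T_{i+1,j-2}$ and past $C_t$ to obtain $C_tT_{i+1}$, and then absorb $T_{i+1}$ into $T_{t+1,k}$ via the braid, the inductive hypothesis you must invoke has parameters $(t,i+1,k,j)$ and therefore needs $i+1<k$. For $k=i+1$ this fails, and the fix you describe (``direct reduction to $C_sT_{s-1}=C_{s-1}T_s$ and the base instance'') does not apply: when $k=i+1$ but $j>i+2$ one needs a genuinely different step, peeling $T_{j-3}$ from the right of $T_{i,j-2}$ and reducing $j\to j-1$ instead of raising $i$. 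Conversely, the case $i=t+1$ that you flag as problematic actually goes through the left-peeling step without incident (part a) still gives $T_tC_t=C_tT_{t+2}$). So the induction can be made to work, but the edge cases are misidentified and the stated remedy would not close them; the case $k=i+1$, $j>i+2$ is a substantive missing piece, not bookkeeping.
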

\begin{proof}
	a) The first two equations follow from definition. The third equation is from Lemma \ref{C1T2=C2T1}.\newline
	b) Note that we trivially have $ C_kC_1 = C_1C_{k+2} $ for all $k\geq 1$, hence by a) \begin{align*}
		C_kC_1T_{2,k+2} &= C_1C_{k+2}T_2\cdots T_{k+1} = C_1T_2\cdots T_{k}C_{k+2}T_{k+1}\\ &= C_1T_2\cdots T_{k}C_{k+1}T_{k+2} = C_1T_{2,k+1}C_{k+1}T_{k+2}. 
	\end{align*}\newline
	c) We may assume that $ t=1 $ (for general $t$, we only need to take tensor product with $ \mbox{id}_{\HH}^{\otimes (t-1)} $ on the left). By repeatedly applying a) and b), we have
	\begin{align*}
		C_1T_{2,k-1}C_{i}T_{i+1,j} =& C_{1}T_2\cdots T_{i-1}(T_{i}\cdots T_{k-2})C_iT_{i+1,j}= C_{1}T_{2,i}(T_{i,k-1}C_i)T_{i+1,j}\\
		\stackrel{a)}{=}& C_1T_{2,i}C_i T_{i+2,k+1}T_{i+1,j}=C_1T_{2,i}C_i (T_{i+2,k+1}T_{i+1,k+1})T_{k+1,j}\\
		(\text{Lemma \ref{Tst+2Tst+1 = Ts+1t+2Tst+2}})=& C_1T_{2,i}C_iT_{i+1,k+1}T_{i+1,k}T_{k+1,j}=(C_1T_{2,i}C_iT_{i+1})T_{i+2,k+1}T_{i+1,k}T_{k+1,j}\\
		\stackrel{b)}{=}&C_{i-1}C_1T_{2,i+1}T_{i+2,k+1}(T_{i+1,k}T_{k+1,j})=C_{i-1}C_1T_{2,i+1}T_{i+2,k+1}(T_{k+1,j}T_{i+1,k})\\
		=& C_{i-1}C_1(T_{2,i+1}T_{i+2,j})T_{i+1,k} = C_{i-1}C_1(T_{i+2,j}T_{2,i+1})T_{i+1,k}\\
		=& C_{i-1}C_1T_{i+2,j}T_{2,k} \stackrel{a)}{=} C_{i-1}T_{i,j-2}C_1T_{2,k}.
	\end{align*}
\end{proof}

We can finally define the twisted contraction operator $ W^T_\pi $. (We will first give the formal definition and then illustrate the intuition behind it using diagrams.)
\begin{defn}
	Let $ \pi\in \mathcal{P}_{1,2}(n) $ be a complete matching with an admissible order $ \varphi $ on $p(\pi)$: $$ \{i_1,j_1\}<_\varphi \cdots <_\varphi \{i_s,j_s\},\quad i_m<j_m, \forall 1\leq m\leq s.$$ Then the twisted contraction operator $ W^T_\pi = W^{T}_{\pi,\varphi}: \HH^{\otimes n}\to \HH^{\otimes |s(\pi)|} $ with respect to $ \pi $ is defined as
	\begin{equation}\label{defn of W}
		W^T_{\pi} = C_{i_s-r_s(i_s)}T_{i_s - r_s(i_s)+1,j_s-r_s(j_s)}\cdots C_{i_1-r_1(i_1)}T_{i_1 - r_1(i_1)+1,j_1-r_1(j_1)},
	\end{equation}
	where again we denote $ r_k(m) = r^\varphi_k(m) $ the total number of $ i_t $'s and $ j_t $'s to the left of $ m $ with $ t<k $. When $ p(\pi) = \varnothing $, i.e. $ \pi = \hat{0} $, $ W^T_{\hat{0}} = \text{id}_{\HH}^{\otimes n} $.
	
	In particular, $W^T_\pi$ is the product of $ |p(\pi)| $ contractions and $ \Cr(\pi) $ twists (due to \eqref{explicit formula of crossing numbers}), and $$ \|W^T_\pi\|\leq \|C_1\|^{|p(\pi)|} \|T\|^{\Cr({\pi})}\leq ((\text{dim}\HH)^{1/2}\|S_H\|)^{|p(\pi)|}\|T\|^{\Cr({\pi})}.$$
\end{defn}
Intuitively, we can consider $C_iT_{i+1,j}$ as a kind of twisted contraction from the $i$-th factor to the $j$-th factor in $\xi_1\otimes \cdots \otimes \xi_n$. If $ T=qF $, then $ C_iT_{i+1,j} $ is exactly taking the contraction of $ \xi_i $ and $\xi_j$ (multiplied by $q$), while for general $T$, $C_iT_{i+1,j}$ may involve all $\xi_t$ with $ i\leq t\leq j $. And so $W^{T}_{\pi,\varphi}$ is the operator of consecutively taking twisted contractions according to the order $ \varphi $. For example, if $ \pi = \{ \{1,3\},\{2,5\},\{4,7\},\{6\} \} $ with order $ \{ 2,5 \}<_\varphi \{1,3\}<_\varphi \{4,7\} $, then we have $ W^T_{\pi,\varphi} = C_1T_2C_1C_2T_3T_4 $ as illustrated below.

\begin{center}
	\begin{tikzpicture}[scale=0.5]
		\foreach \x in {1,...,7}
		\coordinate (\x) at (\x,0) node [below] at (\x) {$\xi_{\x}$};
		\foreach \x/\y [count=\n] in {2/5,1/3,4/7}{
			\ifthenelse{\n = 1}{   \draw[dashed] (\x)--(\x,\n);
				\draw[dashed] (\y)--(\y, \n);
				\draw[dashed] (\x,\n)--(\y, \n);}
			{   \draw (\x)--(\x,\n);
				\draw (\y)--(\y, \n);
				\draw (\x,\n)--(\y, \n);} };
		\foreach \x in {6}
		{\draw (\x)--(\x, 4);}
		
		\draw[-latex] (8,1.5) -- (10,1.5) node[midway,above] {\footnotesize$C_2T_3T_4$};
		
		\foreach \x in {1,2,...,5}
		\coordinate (\x) at (\x +11,0);
		\foreach \x/\y [count=\n] in {1/2,3/5}{
		  \ifthenelse{\n = 1}{   \draw[dashed] (\x)--(\x+11,\n);
				\draw[dashed] (\y)--(\y+11, \n);
				\draw[dashed] (\x+11,\n)--(\y+11, \n);}
			{\draw (\x)--(\x + 11,\n);
			\draw (\y)--(\y + 11, \n);
			\draw (\x + 11,\n)--(\y + 11, \n);} };
		\foreach \x in {4}
		{\draw (\x)--(\x + 11, 3);}
		
		\coordinate (a) at (14,0) node [below] at (a) {$C_2T_3T_4(\xi_1\otimes \cdots \otimes \xi_{7})$};
		\draw[-latex] (18,1.5) -- (20,1.5) node[midway,above] {\footnotesize$C_1$};
		
		\foreach \x in {1,2,3}
		\coordinate (\x) at (\x +22,0);
		\foreach \x/\y [count=\n] in {1/3}{
			   \draw[dashed] (\x)--(\x+22,\n);
				\draw[dashed] (\y)--(\y+22, \n);
				\draw[dashed] (\x+22,\n)--(\y+22, \n);}
		\foreach \x in {2}
		{\draw (\x)--(\x + 22, 2);}
		
		\coordinate (a) at (24,0) node [below] at (a) {$C_1C_2T_3T_4(\xi_1\otimes \cdots \otimes \xi_{7})$};
		\draw[-latex] (27,1.5) -- (29,1.5) node[midway,above] {\footnotesize$C_1T_2$};
		\coordinate (a) at (33,2) node [below] at (a) {$W^T_{\pi,\varphi}(\xi_1\otimes \cdots \otimes \xi_{7})$};
	\end{tikzpicture}
\end{center}
From the diagram, we can see that the number $ r_k(i_k) $ (reps. $ r_k(j_k) $) in the definition of $ W^T_{\pi,\varphi} $ is nothing but the number of indices to the left of $ i_k $ (reps. $j_k$) that are already 'contracted' before we take the $k$-th contraction.

In fact, the definition of $ W^T_\pi $ does not depend on the choice of the admissible order $ \varphi $. Indeed, Lemma \ref{relations between Ci and Ti} a) and c) implies that permutating two neighboring elements (which are $<$ incomparable) of the order $ \varphi $ will not change the operator $ W^T_{\pi,\varphi} $.

\begin{prop}\label{W is independent of order}
	If $ \varphi,\psi $ are two admissible order on $ p(\pi) $, then $ W^{T}_{\pi,\varphi} = W^{T}_{\pi,\psi} $.
\end{prop}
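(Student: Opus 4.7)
The plan is to reduce the invariance statement to invariance under a single adjacent transposition in the linear order. Since any two linear extensions of a finite partial order can be connected by a sequence of such transpositions (each swapping two consecutive elements that are incomparable in the partial order), it suffices to show $W^{T}_{\pi,\varphi} = W^{T}_{\pi,\psi}$ whenever $\varphi$ and $\psi$ differ only by swapping two consecutive pairings $V_k <_\varphi V_{k+1}$ that are $<$-incomparable. The factors in $W^{T}_{\pi,\varphi}$ contributed by pairings outside $\{V_k, V_{k+1}\}$ are unaffected by such a swap, so I only need to verify equality of the two-factor sub-product at positions $k, k+1$.

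Write $a' < b'$ for the positions of the endpoints of $V_k$, and $c' < d'$ for those of $V_{k+1}$, in the tensor remaining after the $k-1$ preceding contractions have been performed. Since $V_k, V_{k+1}$ are $<$-incomparable, neither nests inside the other, so the two pairings are either crossing or disjoint. For the crossing case, WLOG $a' < c' < b' < d'$. Under $\varphi$ the $V_k$-contraction is carried out first and shifts the later positions $c', d'$ to $c'-1, d'-2$; under $\psi$ the $V_{k+1}$-contraction is carried out first and shifts $b'$ to $b'-1$ while leaving $a'$ fixed. The required identity is
\begin{equation*}
C_{c'-1}\,T_{c',\,d'-2}\,C_{a'}\,T_{a'+1,\,b'} \;=\; C_{a'}\,T_{a'+1,\,b'-1}\,C_{c'}\,T_{c'+1,\,d'},
\end{equation*}
which is exactly Lemma~\ref{relations between Ci and Ti}~c) with the substitution $(t,i,k,j) = (a', c', b', d')$.

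For the disjoint case, WLOG $a' < b' < c' < d'$ (the other sub-case $d' < a'$ is obtained by swapping the roles of $V_k$ and $V_{k+1}$). The position shifts now give the identity
\begin{equation*}
C_{c'-2}\,T_{c'-1,\,d'-2}\,C_{a'}\,T_{a'+1,\,b'} \;=\; C_{a'}\,T_{a'+1,\,b'}\,C_{c'}\,T_{c'+1,\,d'}.
\end{equation*}
Starting from the right-hand side, the indices in $T_{a'+1,b'}$ are all $\le b'-1 \le c'-2$, so they commute with $C_{c'}$ by the second sub-identity of Lemma~\ref{relations between Ci and Ti}~a); similarly $T_{a'+1,b'}$ commutes with $T_{c'+1,d'}$ since all index differences are at least three. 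Next, $C_{a'}C_{c'} = C_{c'-2}C_{a'}$ is the trivial commutation of contractions on disjoint supports, valid because $c' \geq a' + 2$. Finally, the relation $T_s C_{a'} = C_{a'} T_{s+2}$ for $s \geq a'$ (Lemma~\ref{relations between Ci and Ti}~a), first sub-identity), rewritten as $C_{a'} T_s = T_{s-2} C_{a'}$ for $s \geq a' + 2$, converts $C_{a'} T_{c'+1, d'}$ into $T_{c'-1, d'-2} C_{a'}$, producing the left-hand side.

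The main obstacle is purely bookkeeping: one has to carefully track how the position indices of the endpoints of $V_{k+1}$ (respectively $V_k$) shift after the contraction of $V_k$ (respectively $V_{k+1}$) under each order. Once the two algebraic identities above are correctly formulated, the crossing case is an immediate invocation of Lemma~\ref{relations between Ci and Ti}~c), and the disjoint case follows by standard commutation of contractions and twists with disjoint or sufficiently separated indices, using only Lemma~\ref{relations between Ci and Ti}~a).
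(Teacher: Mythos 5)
Your proposal is correct and takes essentially the same approach as the paper: reduce to an adjacent transposition of two $<$-incomparable pairings via a bubble-sort/linear-extension argument, then verify the resulting two-factor identity separately in the crossing case (via Lemma~\ref{relations between Ci and Ti}~c)) and the disjoint case (via Lemma~\ref{relations between Ci and Ti}~a) together with the trivial commutation $C_{a'}C_{c'}=C_{c'-2}C_{a'}$). Your bookkeeping of the shifted positions matches the paper's computation of the $r^{\varphi}_k$ indices, and you make the disjoint case slightly more explicit than the paper does, which is a helpful addition rather than a deviation.
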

\begin{proof}
	1) We first show that we can permute two neighboring $<$ incomparable elements of $ \varphi $ without changing the operator. Assume again,
	$$ \{i_1,j_1\}<_\varphi \cdots <_\varphi \{i_s,j_s\},\quad i_m<j_m, \forall 1\leq m\leq s.$$
	Assume that $ \{i_m,j_m\} $ and $ \{i_{m+1},i_{m+1}\} $ are not $<$ incomparable, and that $ \psi $ is the admissible order
	$$    \{i_1,j_1\}<_\psi \cdots <_\psi \{i_{m-1},j_{m-1}\} <_\psi \{i_{m+1},j_{m+1}\}<_\psi \{i_{m},j_{m}\} <_\psi \{i_{m+2},j_{m+2}\} <_\psi\cdots <_\psi \{i_{s},j_{s}\}.$$
	To show that $ W^T_{\pi,\varphi} = W^T_{\pi,\psi} $, by \eqref{defn of W}, it suffices to show that
	\begin{align}\label{Prop 2.9 eq}
		\begin{split}
			&C_{i_{m+1}-r^\varphi_{m+1}(i_{m+1})}T_{i_{m+1} - r^{\varphi}_{m+1}(i_{m+1})+1,j_{m+1}-r^{\varphi}_{m+1}(j_{m+1})} C_{i_{m}-r^\varphi_{m}(i_{m})}T_{i_{m} - r^{\varphi}_{m}(i_{m})+1,j_{m}-r^{\varphi}_{m}(j_{m})}\\
			=&C_{i_{m}-r^\psi_{m+1}(i_{m})}T_{i_{m} - r^{\psi}_{m+1}(i_{m})+1,j_{m}-r^{\psi}_{m+1}(j_{m})}C_{i_{m+1}-r^\psi_{m}(i_{m+1})}T_{i_{m+1} - r^{\psi}_{m+1}(i_{m+1})+1,j_{m+1}-r^{\psi}_{m}(j_{m+1})}.
		\end{split}
	\end{align} 
	 Without loss of generality, we may assume $ i_m<i_{m+1} $ (otherwise we can switch the places of $\varphi$ and $\psi$). Since $ \{i_m,j_m\} $ and $ \{i_{m+1},i_{m+1}\} $ are not $ < $ comparable, either $ i_m<i_{m+1}<j_{m}<j_{m+1} $ or $ i_{m}<j_{m}<i_{m+1}<j_{m+1} $.
	 \begin{enumerate}[(a)]
	 	\item If $ i_m<i_{m+1}<j_{m}<j_{m+1} $, then equation \eqref{Prop 2.9 eq} follows from Lemma \ref{relations between Ci and Ti} c) since $ r^{\psi}_{m+1}(i_{m}) = r^\varphi_{m}(i_{m}) $, $ r^{\psi}_{m+1}(j_{m}) = r^\varphi_{m}(j_{m})+1 $,  $ r^{\psi}_{m}(i_{m+1}) = r^\varphi_{m+1}(i_{m+1})-1 $, and $ r^{\psi}_{m}(j_{m+1}) = r^\varphi_{m+1}(j_{m+1})-2 $.
	 	\begin{center}
	 		\begin{tikzpicture}[scale=0.8]
	 			\coordinate (1) at (1,0) node [below] at (1) {$i_m$};
	 			\coordinate (2) at (2,0) node [below] at (2) {$i_{m+1}$};
	 			\coordinate (3) at (3,0) node [below] at (3) {$j_m$};
	 			\coordinate (4) at (4,0);
	 			\coordinate (5) at (5,0) node [below] at (5) {$j_{m+1}$};
	 			\foreach \x/\y [count=\n] in {1/3,2/5}
	 			{   \draw (\x)--(\x,\n);
	 				\draw (\y)--(\y, \n);
	 				\draw (\x,\n)--(\y, \n);}
	 			
	 			\coordinate (c1) at (3,-0.8) node [below] at (c1) {$W^T_{\pi,\varphi}$};
	 			
	 			\draw[-latex] (6,1) -- (8,1);
	 			
				\coordinate (9) at (9,0) node [below] at (9) {$i_m$};
				\coordinate (10) at (10,0) node [below] at (10) {$i_{m+1}$};
				\coordinate (11) at (11,0) node [below] at (11) {$j_m$};
				\coordinate (12) at (12,0);
				\coordinate (13) at (13,0) node [below] at (13) {$j_{m+1}$};
	 			\foreach \x/\y [count=\n] in {10/13,9/11}
	 			{   \draw (\x)--(\x,\n);
	 				\draw (\y)--(\y, \n);
	 				\draw (\x,\n)--(\y, \n);}
	 			
	 			\coordinate (c2) at (11,-0.8) node [below] at (c2) {$ W^T_{\pi,\psi} $};
	 		\end{tikzpicture}
	 	\end{center}
	 	\item If $ i_{m}<j_{m}<i_{m+1}<j_{m+1} $, then similarly equation \eqref{Prop 2.9 eq} follows from Lemma \ref{relations between Ci and Ti} a) since $ r^{\psi}_{m+1}(i_{m}) = r^\varphi_{m}(i_{m}) $, $ r^{\psi}_{m+1}(j_{m}) = r^\varphi_{m}(j_{m}) $,  $ r^{\psi}_{m}(i_{m+1}) = r^\varphi_{m+1}(i_{m+1})-2 $, and $ r^{\psi}_{m}(j_{m+1}) = r^\varphi_{m+1}(j_{m+1})-2 $.
	 		 	\begin{center}
	 		\begin{tikzpicture}[scale=0.8]
	 			\coordinate (1) at (1,0) node [below] at (1) {$i_m$};
	 			\coordinate (3) at (3,0) node [below] at (3) {$i_{m+1}$};
	 			\coordinate (2) at (2,0) node [below] at (2) {$j_m$};
	 			\coordinate (4) at (4,0);
	 			\coordinate (5) at (5,0) node [below] at (5) {$j_{m+1}$};
	 			\foreach \x/\y [count=\n] in {1/2,3/5}
	 			{   \draw (\x)--(\x,\n);
	 				\draw (\y)--(\y, \n);
	 				\draw (\x,\n)--(\y, \n);}
	 			
	 			\coordinate (c1) at (3,-0.8) node [below] at (c1) {$W^T_{\pi,\varphi}$};
	 			
	 			\draw[-latex] (6,1) -- (8,1);
	 			
	 			\coordinate (9) at (9,0) node [below] at (9) {$i_m$};
	 			\coordinate (11) at (11,0) node [below] at (11) {$i_{m+1}$};
	 			\coordinate (10) at (10,0) node [below] at (10) {$j_m$};
	 			\coordinate (12) at (12,0);
	 			\coordinate (13) at (13,0) node [below] at (13) {$j_{m+1}$};
	 			\foreach \x/\y [count=\n] in {11/13,9/10}
	 			{   \draw (\x)--(\x,\n);
	 				\draw (\y)--(\y, \n);
	 				\draw (\x,\n)--(\y, \n);}
	 			
	 			\coordinate (c2) at (11,-0.8) node [below] at (c2) {$ W^T_{\pi,\psi} $};
	 		\end{tikzpicture}
	 	\end{center}
	 \end{enumerate}
 2) For two general admissible orders $ \varphi $ and $ \psi $, we can proceed like the bubble sort algorithm: Assume $ V_1<_\varphi V_2<_\varphi\cdots <_\varphi V_s $, if $ V_k>_\psi V_{k+1} $ for some $k$, then we apply 1) to permute them. Since after each step, the inversion number $ |\{ (i,j):   V_1<_\varphi V_2,  V_1>_\psi V_2 \}| $ will decrease, this process must end within finite steps. Therefore, by 1), $ W^T_{\pi,\varphi} = W^T_{\pi,\psi} $.
\end{proof}

\begin{eg}
	Let $ \pi = \{\{1,4\},\{2,10\},\{5,8\},\{7,9\},\{3\},\{6\}\} $, and $\varphi$ be as in Example \ref{eg1}. Then $ W^T_{\pi,L} = C_3 C_2 T_{3,5}C_1 T_{2,8}C_1 T_{2,4} = W^T_{\pi,R} = C_1 T_2 C_4 T_{5} C_6 T_{7,9} C_2 T_{3,10} = W^T_{\pi,\varphi} =  C_3C_1T_2C_4 T_{5,7}C_2 T_{3,10} $.
\end{eg}

\subsection{$T$-Wick product}
 For all $\xi_1,\cdots,\xi_n\in \HH$, since $ \Omega $ is separating for $ \mathcal{L}_{T}(H) $, there is a unique operator $\varPhi(\xi_1\otimes\cdots \otimes\xi_n)\in \mathcal{L}_{T}(H) $ such that $$\varPhi(\xi_1\otimes \cdots \otimes\xi_n)\Omega =  \xi_1\otimes \cdots \otimes \xi_n \in \FF_T(\HH)=L^2(\mathcal{L}_{T}(H),\varphi_\Omega) $$
We call such a $ \varPhi(\xi_1\otimes\cdots \otimes\xi_n) $ the $T$-Wick product (also $T$-Wick polynomials when a basis of $\HH$ is given) of the operators $ X_T(\xi_1),\cdots,X_T(\xi_n) $. Formally, $ \varPhi $ is just inverse of the inclusion $ \mathcal{L}_{T}(H) \ni A \mapsto A\Omega \in \FF_T(\HH) = L^2( \mathcal{L}_{T}(H) ) $. Since $ \xi_1\otimes\cdots\otimes \xi_n = X_T(\xi_1)(\xi_2\otimes\cdots\otimes \xi_n) - a_T(S\xi_1)\xi_2\otimes\cdots\otimes \xi_n $, the $T$-Wick product satisfies the recurrence relation:
\begin{equation}\label{recurrence of Wick product}
	\varPhi( \xi_1\otimes\cdots\otimes \xi_n ) = X_T(\xi_1)\varPhi(\xi_2\otimes\cdots\otimes \xi_n)- \varPhi( a_T(S\xi_1)\xi_2\otimes\cdots\otimes \xi_n ).
\end{equation}

In particular, when $\HH$ is finite dimensional with a fixed basis $ (e_1,\cdots,e_d) $, $ \varPhi(\xi_1\otimes\cdots \otimes\xi_n) $ can always be written as a polynomial in $ X_T(e_1),\cdots,X_T(e_d) $.

For a general twist $T$, unlike the $q$-Gaussian algebras, the formula of $T$-Wick product is less transparent since the dependence of $ \varPhi(\xi_1\otimes\cdots \otimes\xi_n) $ on the factors $ \xi_i $'s is implicit due to the twist $T$. For example, for $n=3$, $$ \varPhi(\xi_1\otimes\xi_2\otimes \xi_3) = X_T(\xi_1)X_T(\xi_2)X_T(\xi_3) - \langle S\xi_1,\xi_2\rangle X_T(\xi_3) - \langle S\xi_2,\xi_3\rangle X_T(\xi_1) - X_T(a(S\xi_1)T(\xi_2\otimes \xi_3)),$$
where $ a(S\xi_1)T(\xi_2\otimes \xi_3) $ may no longer be in the span of $\xi_1,\cdots,\xi_3$ (in the $q$-Araki-Woods case, it is $ q\langle S\xi_1,\xi_3\rangle \xi_2 $).
In \cite{Kr00}, a formula of $T$-Wick product is given in terms of operators of the form $ a^*_T(e_{i_{1}})\cdots a^*_T(e_{i_{s}})a_T(e_{j_{1}})\cdots a_T(e_{j_{t}}) $. We will instead give a different formula in terms of the twisted contractions $ W^T_\pi $ (Theorem \ref{Wick product}).

Since we will prove our $ T $-Wick product formula by induction, we need to discuss the recurrence relations of $ W^T_\pi $.

First, we introduce a counting lemma about $ \mathcal{P}_{1,2}(n) $.
\begin{lem}\label{counting of P_12}
	Let $ \mathcal{P}'_{1,2}(n):= \{ \pi\in \mathcal{P}_{1,2}(n): \{1\}\in s(\pi) \} $ and $ \mathcal{P}''_{1,2}(n): = \{ \pi\in \mathcal{P}_{1,2}(n): \{1\}\notin s(\pi) \} $. For a $\pi\in \mathcal{P}''_{1,2}(n)$, we denote by $ j_{1}^\pi\in \{2,\cdots,n\} $ the element such that $ \{1,j_{1}^\pi\}\in p(\pi) $. Then we have the following bijections:
	\begin{enumerate}[a)]
		\item For each $ n\geq 2 $, $ d_{s}:\mathcal{P}'_{1,2}(n)\to \mathcal{P}_{1,2}(n-1)$, for all $ \pi \in \mathcal{P}'_{1,2}(n)$, $ d_{s}(\pi) $ is the partition obtained by deleting $ \{1\} $ from $ \pi $ and then identifying $ \{2,\cdots,n\} $ with $ [n-1] $ (with the order preserved).
		\item For each $ n\geq 3 $, $D_p: \mathcal{P}''_{1,2}(n)\to \mathcal{P}_{1,2}(n-2)\times [n-1]$, for all $ \pi\in \mathcal{P}''_{1,2}(n) $, $ D_p(\pi) := (d_p(\pi),j_{1}^\pi -1) $ where $ d_p(\pi) $ is the partition obtained by deleting $ \{1,j_1^\pi\} $ from $ \pi $ and then identifying $ [n]/\{1,j_1^\pi\} $ with $ [n-2] $ (with the order preserved).
	\end{enumerate}
\end{lem}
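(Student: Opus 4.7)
The plan is to verify both parts by explicitly constructing inverse maps; the statement is really a bookkeeping lemma, so the main work is just checking that these inverses are well-defined and cancel correctly.

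For part a), I would start by observing that $d_s$ is well-defined: if $\pi \in \mathcal{P}'_{1,2}(n)$, then removing the singleton block $\{1\}$ leaves a collection of singletons and pairings whose union is $\{2,\ldots,n\}$, so it is an element of $\mathcal{P}_{1,2}(\{2,\ldots,n\})$, which under the unique order-preserving bijection $c: \{2,\ldots,n\} \to [n-1]$, $c(k) = k-1$, corresponds to an element of $\mathcal{P}_{1,2}(n-1)$. I would then define the inverse $e_s: \mathcal{P}_{1,2}(n-1) \to \mathcal{P}'_{1,2}(n)$ by $e_s(\sigma) = \{\{1\}\} \cup c^{-1}(\sigma)$, where $c^{-1}$ shifts every index up by one. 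The verifications $d_s \circ e_s = \mathrm{id}$ and $e_s \circ d_s = \mathrm{id}$ are immediate from the definitions, since adding back the singleton $\{1\}$ after having removed it (and undoing the reindexing) recovers the original partition.

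For part b), I would observe first that the map is well-defined: for $\pi \in \mathcal{P}''_{1,2}(n)$, the element $1$ must lie in some pairing block, and since blocks are disjoint, there is a unique $j_1^\pi \in \{2,\ldots,n\}$ with $\{1, j_1^\pi\} \in p(\pi)$, so $j_1^\pi - 1 \in [n-1]$. Removing the pair $\{1, j_1^\pi\}$ leaves an incomplete matching on $[n] \setminus \{1, j_1^\pi\}$, which has exactly $n-2$ elements and can be identified with $[n-2]$ via the unique order-preserving bijection. The inverse $E_p: \mathcal{P}_{1,2}(n-2) \times [n-1] \to \mathcal{P}''_{1,2}(n)$ sends $(\sigma, k)$ to the partition obtained by first setting $j := k+1 \in \{2,\ldots,n\}$, lifting $\sigma$ through the order-preserving bijection $[n-2] \to [n] \setminus \{1, j\}$, and then adjoining the pair $\{1, j\}$. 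Again, the verification that $D_p$ and $E_p$ are mutually inverse is routine: applying $E_p$ after $D_p$ restores both the pair $\{1, j_1^\pi\}$ and the original indexing, and applying $D_p$ after $E_p$ recovers $\sigma$ and $k$ because $j_1^{E_p(\sigma,k)} = k+1$.

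The only step that requires a moment's care is the identification of a partition on an arbitrary ordered $m$-element subset of $[n]$ with a partition in $\mathcal{P}_{1,2}(m)$. I would justify this once via the unique order-preserving counting bijection $c_S: [|S|] \to S$ already introduced just above the lemma, and note that this identification is functorial with respect to inclusions of ordered subsets, so the operations of deleting and re-inserting fixed elements commute with re-indexing. There is no real obstacle here; the main value of the lemma lies not in its (elementary) proof but in the fact that it sets up the case split $\mathcal{P}_{1,2}(n) = \mathcal{P}'_{1,2}(n) \sqcup \mathcal{P}''_{1,2}(n)$ that will drive the inductive recurrence for $W^T_\pi$ via \eqref{recurrence of Wick product}.
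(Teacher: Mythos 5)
Your proposal is correct and follows essentially the same approach as the paper: both proofs establish the bijections by constructing explicit inverse maps via the order-preserving reindexing, with the inverse for a) adjoining the singleton $\{1\}$ after a right shift and the inverse for b) adjoining the pair $\{1,k+1\}$ after lifting through the order-preserving bijection $[n-2]\to[n]\setminus\{1,k+1\}$. You supply slightly more detail on well-definedness and the mutual-inverse verification, but the argument is the same.
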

\begin{proof}
	It suffices to construct the inverse of those maps. a) For a $ \sigma\in \mathcal{P}_{1,2}(n-1) $, $ d^{-1}_s(\sigma) $ is simply obtained by right shifting the blocks of $ \sigma $ by $ 1 $ and then adding the singleton $ \{1\} $. b) For a $ \sigma\in \mathcal{P}_{1,2}(n-2) $ and $ 1\leq k\leq n-1 $, let $ f: [n-2]\to [n]/\{1,k+1\} $ be the order preserving bijection. Then $ D_p^{-1}(\sigma,k) $ is the union $ \{\{1,k+1\}\}\cup f(\sigma) $.
\end{proof}

Using \eqref{defn of W}, considering the left standard order, by writing $ r^{d_s(\pi)}_{k}(m) $ and $ r^{d_p(\pi)}_{k}(m) $ in terms of $ r^{\pi}_{k}(m) $, one can easily obtain the following recurrence relations.
\begin{lem}\label{induction on W}
	For each $ \pi\in \mathcal{P}_{1,2}(n) $.
	\begin{enumerate}[a)]
		\item For $ n\geq 2 $, and $ \pi\in \mathcal{P}'_{1,2}(n) $,
		$$ W^T_{\pi}= \mbox{id}_\HH \otimes W_{d_s(\pi)}^T.$$
		Or equivalently,
		$$ W^T_{\pi}(\xi_1\otimes \cdots \otimes \xi_n)= a^*(\xi_1)W^T_{d_s(\pi)}(\xi_2\otimes \cdots \otimes \xi_n). $$
		\item For $ n\geq 3 $, and $ \pi \in \mathcal{P}''_{1,2}(n) $,
		$$ W^T_{\pi}=W^T_{d_p(\pi)} C_1T_{2,j_{1}^\pi}. $$
		Or equivalently,
		$$ W^T_{\pi}(\xi_1\otimes\cdots\otimes \xi_n)= W^T_{d_p(\pi)}( a(S\xi_1)T_{1,j_{1}^\pi-1}(\xi_2\otimes\cdots\otimes \xi_n) ).$$
	\end{enumerate}
\end{lem}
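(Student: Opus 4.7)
The plan is to verify both identities directly from the definition \eqref{defn of W}, using the left standard order throughout (permissible by Proposition \ref{W is independent of order}). The vector-level restatements will then follow at once: for (a), $\text{id}_\HH \otimes W^T_{d_s(\pi)}$ applied to $\xi_1\otimes\cdots\otimes\xi_n$ is by construction $\xi_1\otimes W^T_{d_s(\pi)}(\xi_2\otimes\cdots\otimes\xi_n)=a^*(\xi_1)W^T_{d_s(\pi)}(\xi_2\otimes\cdots\otimes\xi_n)$; for (b), since every $T_k$ with $k\geq 2$ leaves the first tensor factor fixed, $T_{2,j_1^\pi}(\xi_1\otimes\cdots\otimes\xi_n)=\xi_1\otimes T_{1,j_1^\pi-1}(\xi_2\otimes\cdots\otimes\xi_n)$, and then $C_1$ applied to $\xi_1\otimes\eta$ equals $a(S\xi_1)\eta$. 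So the task reduces to the operator identities.

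For part (a), I will choose the left standard order on $\pi$. Since $\{1\}\in s(\pi)$, every pairing $\{i_k,j_k\}$ satisfies $i_k,j_k\geq 2$, and the left standard order on $\pi$ corresponds bijectively to that on $d_s(\pi)$ via the shift $m\mapsto m-1$. Because no endpoint of any pairing equals $1$, a direct comparison should give
\[
r^\pi_k(m)=r^{d_s(\pi)}_k(m-1)\quad\text{for all }m\geq 2,
\]
from which every index $i_k-r^\pi_k(i_k)$ appearing in $W^T_\pi$ exceeds the corresponding index in $W^T_{d_s(\pi)}$ by exactly $1$. Raising every $C_i$- and $T_i$-index by $1$ amounts to tensoring with $\text{id}_\HH$ on the left, so $W^T_\pi=\text{id}_\HH\otimes W^T_{d_s(\pi)}$.

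For part (b), the left standard order places $\{1,j_1^\pi\}$ first (since $i_1=1$), so $r_1(i_1)=r_1(j_1)=0$ and the rightmost factor of $W^T_\pi$ is exactly $C_1T_{2,j_1^\pi}$. The remaining factors ($k\geq 2$) should reassemble into $W^T_{d_p(\pi)}$ after the relabeling of $[n]\setminus\{1,j_1^\pi\}$ onto $[n-2]$ that sends $m\mapsto m-1$ for $m<j_1^\pi$ and $m\mapsto m-2$ for $m>j_1^\pi$. A three-case analysis of how $\{i_k,j_k\}$ sits relative to $\{1,j_1^\pi\}$ (nested inside, disjoint after, or crossing it) will show that the endpoint $1$ always lies below any $i_k,j_k$ ($k\geq 2$), while the endpoint $j_1^\pi$ lies below $m$ iff $m>j_1^\pi$. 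Hence
\[
r^\pi_k(m)=\begin{cases}r^{d_p(\pi)}_{k-1}(m-1)+1, & m<j_1^\pi,\\ r^{d_p(\pi)}_{k-1}(m-2)+2, & m>j_1^\pi,\end{cases}
\]
and in each subcase the shift by $1$ or $2$ cancels exactly, giving $i_k-r^\pi_k(i_k)=i'_{k-1}-r^{d_p(\pi)}_{k-1}(i'_{k-1})$ (and similarly for $j_k$). This identifies the $k$-th factor of $W^T_\pi$ with the $(k-1)$-th factor of $W^T_{d_p(\pi)}$, proving $W^T_\pi=W^T_{d_p(\pi)}\,C_1T_{2,j_1^\pi}$.

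The main (and only) obstacle is pure index bookkeeping in part (b): the three-case verification of how an arbitrary later pair interacts with $\{1,j_1^\pi\}$. This is straightforward because the contribution of $\{1,j_1^\pi\}$ to $r^\pi_k(m)$ depends only on whether $m$ exceeds $j_1^\pi$. Notably, no commutation identities from Lemma \ref{relations between Ci and Ti} are needed here—those were invoked already to prove Proposition \ref{W is independent of order}, whose order-independence I am exploiting to reduce everything to the particularly clean left standard order.
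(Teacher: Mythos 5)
Your proof is correct and follows the same route as the paper: fix the left standard order, compare the counting functions $r^\pi_k(m)$ with those of $d_s(\pi)$ (resp.\ $d_p(\pi)$) via the order-preserving relabeling, and read off the factorization from \eqref{defn of W}. The only difference is that you spell out the index bookkeeping (the two-case shift by $1$ or $2$ in part (b)) that the paper compresses into ``one can directly check.''
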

\begin{proof}
	Considering the left standard order on those partitions. We use the same notation as in \eqref{defn of W}.\newline
	a) This follows directly from that $ r^{d_s(\pi)}_{k}(m) = r^{\pi}_{k}(m+1) $ for all $k\geq 1$ and $m\geq 1$. b) One can directly check that $ i^\pi_{m+1} -r^{\pi}_{m+1}(i^\pi_{m+1}) =  i^{d_p(\pi)}_m -r^{d_p(\pi)}_{m}(i^{d_p(\pi)}_m) $ for all $ m\geq 1 $. Therefore by \eqref{defn of W}, $$ W^T_{\pi,L} = W^{T}_{d_p(\pi),L} C_{1-r^\pi_1(1)}T_{1 - r^{\pi}_1(1)+1,j^\pi_1-r^\pi_1(j^\pi_1)} = W^T_{d_p(\pi)} C_1T_{2,j^\pi_1}.$$
\end{proof}

The recurrence relation a) and b) can be intuitively illustrated by the following graphs using the left standard order:\newline
For a), let $ \pi_0 = \{\{1,3\},\{2,5\},\{4\}\} $.
\begin{center}
	\begin{tikzpicture}[scale=0.8]
		\foreach \x in {1,...,5}
		\coordinate (\x) at (\x,0) node [below] at (\x) {$\x$};
		\foreach \x/\y [count=\n] in {1/3,2/5}
		{   \draw (\x)--(\x,\n);
			\draw (\y)--(\y, \n);
			\draw (\x,\n)--(\y, \n);}
		\foreach \x in {4}
		\draw (\x)--(\x, 3);
		
		\coordinate (c1) at (3,-1) node [below] at (c1) {$W^T_{\pi_0}$};
		
		\draw[-latex] (6,1.5) -- (8,1.5) node[midway,above] {\footnotesize$\text{id}_{\HH}\otimes$};
		
		\foreach \x in {1,...,6}
		\coordinate (\x) at (\x + 8,0) node [below] at (\x) {$\x$};
		\foreach \x/\y [count=\n] in {2/4,3/6}
		{   \draw (\x)--(\x + 8,\n);
			\draw (\y)--(\y + 8, \n);
			\draw (\x + 8,\n)--(\y + 8, \n);}
		\foreach \x in {5}
		\draw (\x)--(\x + 8, 3);
		\draw[dashed] (1)--(1 + 8, 3);
		
		\coordinate (c2) at (11.5,-1) node [below] at (c2) {$ W^T_{d_s^{-1}(\pi_0)} $};
	\end{tikzpicture}
\end{center}
For b), let $ \pi = \{\{1,4\},\{2,10\},\{3\},\{5,8\},\{6\},\{7,9\}\} $.
\begin{center}
	\begin{tikzpicture}[scale=0.8]
		\foreach \x in {1,...,10}
		\coordinate (\x) at (\x,0) node [below] at (\x) {$\xi_{\x}$};
		\foreach \x/\y [count=\n] in {1/4,2/10,5/8,7/9}{
			\ifthenelse{\n = 1}{   \draw[dashed] (\x)--(\x,\n);
				\draw[dashed] (\y)--(\y, \n);
				\draw[dashed] (\x,\n)--(\y, \n);}
			{   \draw (\x)--(\x,\n);
				\draw (\y)--(\y, \n);
				\draw (\x,\n)--(\y, \n);} };
		\foreach \x in {3,6}
		{\draw (\x)--(\x, 5);}
		
		\coordinate (c1) at (5.5,-1) node [below] at (c1) {$W^T_\pi(\xi_1\otimes \cdots\otimes \xi_{10})$};
		
		\draw[latex-latex] (11,2) -- (12,2) node[midway,above] {\footnotesize$=$};
		
		\foreach \x in {1,2,...,8}
		\coordinate (\x) at (\x + 12,0);
		\foreach \x/\y [count=\n] in {1/8,3/6,5/7}
		{   \draw (\x)--(\x + 12,\n);
			\draw (\y)--(\y + 12, \n);
			\draw (\x + 12,\n)--(\y + 12, \n);}
		\foreach \x in {2,4}
		{\draw (\x)--(\x + 12, 4);}
		
		\coordinate (a) at (16.5,0) node [below] at (a) {$a(S\xi_1)T_1T_2(\xi_2\otimes \cdots \otimes \xi_{10})$};
		\coordinate (c2) at (16.5,-1) node [below] at (c2) {$W^T_{d_p(\pi)}(a(S\xi_1)T_1T_2(\xi_2\otimes \cdots\otimes \xi_{10}))$};
	\end{tikzpicture}
\end{center}

\begin{thm}\label{Wick product}
	For all $ \xi_1,\cdots,\xi_n \in \HH$,
	$$ \varPhi(\xi_1\otimes\cdots\otimes\xi_n) = X\bigg(\sum_{\pi\in \mathcal{P}_{1,2}(n)} (-1)^{|p(\pi)|}W_\pi^T(\xi_1\otimes\cdots\otimes \xi_n)\bigg),$$
	where $ X $ is the linear map $ X(\eta_1\otimes \cdots\otimes \eta_k) = X_T(\eta_1)X_T(\eta_2)\cdots X_T(\eta_k) \in \mathcal{L}_T(H)$ for all $ \eta_i\in \HH $ and $ X(\Omega) = 1 $.
\end{thm}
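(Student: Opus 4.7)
The plan is to prove the formula by induction on $n$, with the recurrence \eqref{recurrence of Wick product} for $\varPhi$ on one side and the two recurrences in Lemma \ref{induction on W} for $W^T_\pi$ on the other, matched via the decomposition $\mathcal{P}_{1,2}(n)=\mathcal{P}'_{1,2}(n)\sqcup \mathcal{P}''_{1,2}(n)$ from Lemma \ref{counting of P_12}. The base cases $n=0$ and $n=1$ are immediate: $\varPhi(\Omega)=1=X(\Omega)$, and for $n=1$ the unique $\pi\in\mathcal{P}_{1,2}(1)$ has $|p(\pi)|=0$ and $W^T_\pi=\mathrm{id}$, so both sides equal $X_T(\xi_1)$.

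For the inductive step at level $n\geq 2$, I split the sum on the RHS according to whether $1$ is a singleton. For $\pi\in \mathcal{P}'_{1,2}(n)$, Lemma \ref{induction on W}(a) gives $W^T_\pi(\xi_1\otimes\cdots\otimes\xi_n)=\xi_1\otimes W^T_{d_s(\pi)}(\xi_2\otimes\cdots\otimes\xi_n)$, and $|p(\pi)|=|p(d_s(\pi))|$; since $X(\xi_1\otimes\eta)=X_T(\xi_1)\,X(\eta)$ and $d_s:\mathcal{P}'_{1,2}(n)\to\mathcal{P}_{1,2}(n-1)$ is a bijection, the induction hypothesis on $n-1$ yields
\[
X\!\left(\sum_{\pi\in\mathcal{P}'_{1,2}(n)}(-1)^{|p(\pi)|}W^T_\pi(\xi_1\otimes\cdots\otimes\xi_n)\right)=X_T(\xi_1)\,\varPhi(\xi_2\otimes\cdots\otimes\xi_n).
\]
For $\pi\in\mathcal{P}''_{1,2}(n)$, Lemma \ref{induction on W}(b) gives $W^T_\pi(\xi_1\otimes\cdots\otimes\xi_n)=W^T_{d_p(\pi)}\bigl(a(S\xi_1)T_{1,j_1^\pi-1}(\xi_2\otimes\cdots\otimes\xi_n)\bigr)$ with $|p(\pi)|=|p(d_p(\pi))|+1$. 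Using the bijection $D_p$ from Lemma \ref{counting of P_12}(b) to re-parametrize the sum by $(\sigma,k)\in\mathcal{P}_{1,2}(n-2)\times[n-1]$ with $k=j_1^\pi-1$, and recognizing that $R_{T,n-1}=\sum_{k=1}^{n-1}T_{1,k}$ so that $a_T(S\xi_1)(\xi_2\otimes\cdots\otimes\xi_n)=\sum_{k=1}^{n-1}a(S\xi_1)T_{1,k}(\xi_2\otimes\cdots\otimes\xi_n)$, I can apply the induction hypothesis at level $n-2$ to each vector $a(S\xi_1)T_{1,k}(\xi_2\otimes\cdots\otimes\xi_n)\in\HH^{\otimes(n-2)}$ to obtain
\[
X\!\left(\sum_{\pi\in\mathcal{P}''_{1,2}(n)}(-1)^{|p(\pi)|}W^T_\pi(\xi_1\otimes\cdots\otimes\xi_n)\right)=-\varPhi\bigl(a_T(S\xi_1)(\xi_2\otimes\cdots\otimes\xi_n)\bigr).
\]

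Summing the two contributions and invoking the recurrence \eqref{recurrence of Wick product} closes the induction. There is no serious obstacle: the argument is essentially forced once the combinatorial bijections of Lemma \ref{counting of P_12} are matched with the operator recurrences of Lemma \ref{induction on W}. The only subtlety to watch is the sign bookkeeping (the extra $(-1)$ in the $\mathcal{P}''_{1,2}(n)$ branch coming from $|p(\pi)|=|p(d_p(\pi))|+1$, which is exactly the minus sign appearing in \eqref{recurrence of Wick product}) and the identification of the expansion $R_{T,n-1}=\sum_{k=1}^{n-1}T_{1,k}$ with the index $k=j_1^\pi-1$ parametrizing the bijection $D_p$; all the heavier work (admissibility of the order, Yang--Baxter manipulations, crossing symmetry as $C_1T_2=C_2T_1$) has already been absorbed into Proposition \ref{W is independent of order} and Lemma \ref{induction on W}.
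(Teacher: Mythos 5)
Your proposal is correct and follows essentially the same route as the paper's own proof: induction on $n$ driven by the Wick recurrence \eqref{recurrence of Wick product}, with the sum over $\mathcal{P}_{1,2}(n)$ split into $\mathcal{P}'_{1,2}(n)\sqcup\mathcal{P}''_{1,2}(n)$ and matched against Lemmas \ref{counting of P_12} and \ref{induction on W} (the only cosmetic difference is that the paper verifies $n=1,2$ by hand before inducting for $n>2$, whereas you start inducting at $n=2$, which implicitly uses the $n=2$ instance of Lemma \ref{induction on W}(b) even though the paper states that lemma for $n\geq3$; the formula does hold at $n=2$ under the convention $W^T_{1^0}=\mathrm{id}$, so this is harmless).
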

\begin{proof}
	We prove by induction on $n$. For $n=1$, $ \varPhi(\xi_1) = X_T(\xi_1) = X(\xi_1) $. And for $ n=2 $, $ \varPhi(\xi_1\otimes\xi_2) = X_T( \xi_1 )X_T(\xi_2)-\langle S\xi_1,\xi_2 \rangle 1 = X( \xi_1\otimes\xi_2 - C_1( \xi_1\otimes\xi_2) ) $. Now assume that the formula holds for vectors with less than $n $ factors with $n>2$. Then by the recurrence relation \eqref{recurrence of Wick product} for the Wick product,
	\begin{align*}
		\varPhi( \xi_1\otimes\cdots\otimes \xi_n ) =& X_T(\xi_1)\varPhi(\xi_2\otimes\cdots\otimes \xi_n)- \varPhi( a_T(S\xi_1)\xi_2\otimes\cdots\otimes \xi_n )\\
		=&X_T(\xi_1)\varPhi(\xi_2\otimes\cdots\otimes \xi_n)- \varPhi( C_1\sum_{k=1}^{n-1}T_{2,k+1}\xi_1\otimes\cdots\otimes \xi_n )\\
		\intertext{by induction hypothesis}=&X_T(\xi_1)X\bigg(\sum_{\sigma\in \mathcal{P}_{1,2}(n-1)} (-1)^{|p(\sigma)|}W_\sigma^T(\xi_2\otimes\cdots\otimes \xi_n)\bigg)\\&- X\bigg(\sum_{\sigma'\in \mathcal{P}_{1,2}(n-2)}\sum_{k=1}^{n-1} (-1)^{|p(\sigma')|}W_{\sigma'}^T C_1 T_{2,k+1}(\xi_1\otimes\cdots\otimes \xi_n)\bigg)\\
		\intertext{applying Lemma \ref{counting of P_12}}=&  \sum_{\pi \in \mathcal{P}'_{1,2}(n)}(-1)^{|p(\pi)|}X_T(\xi_1)XW_{d_s(\pi)}^T(\xi_2\otimes\cdots\otimes \xi_n) \\&- \sum_{\pi \in \mathcal{P}''_{1,2}(n)}(-1)^{|p(d_p(\pi))|}XW_{d_p(\pi)}^T C_1 T_{2,j_1^\pi}(\xi_1\otimes\cdots\otimes \xi_n)\\
		=&  \sum_{\pi \in \mathcal{P}'_{1,2}(n)}(-1)^{|p(\pi)|}X\bigg(\xi_1 \otimes W_{d_s(\pi)}^T(\xi_2\otimes\cdots\otimes \xi_n)\bigg) \\&- \sum_{\pi \in \mathcal{P}''_{1,2}(n)}(-1)^{|p(\pi)|-1}XW_{d_p(\pi)}^T C_1 T_{2,j_1^\pi}(\xi_1\otimes\cdots\otimes \xi_n)\\
		\text{(Lemma \ref{induction on W})} =& \sum_{\pi \in \mathcal{P}'_{1,2}(n)}(-1)^{|p(\pi)|}X W_{\pi}^T(\xi_1\otimes\cdots\otimes \xi_n) + \sum_{\pi \in \mathcal{P}''_{1,2}(n)}(-1)^{|p(\pi)|}XW_{\pi}^T(\xi_1\otimes\cdots\otimes \xi_n)\\
		=& \sum_{\pi \in \mathcal{P}_{1,2}(n)}(-1)^{|p(\pi)|}X W_{\pi}^T(\xi_1\otimes\cdots\otimes \xi_n).
	\end{align*}
\end{proof}
\begin{rmk}
	We note that this formula only works for finite dimensional $ \HH $. If $ \HH $ is infinite dimensional, since $ C_1 $ is unbounded, $W^T_\pi(\xi_1\otimes \cdots \otimes \xi_n)$ or $ XW^T_\pi(\xi_1\otimes \cdots \otimes \xi_n) $ are not well defined for general $\pi$.
\end{rmk}
\iffalse
\begin{rmk}
	For infinite dimensional $ \HH $, the same formula still holds when we assume that $ \xi_i \in H+iH $. Indeed, by Proposition 3.8 b) \cite{dL22} and the fact that $ [T,(S_H\otimes S_H)\circ F]=0 $, one can show that $T_j$ and $ C_1T_2 $ are bounded maps defined on the Hilbert space $ (H+iH)^{\otimes k} $, with $ H+iH $ equipped with the graph norm $ \langle h_1,h_2\rangle_{S_H} := \langle h_1,h_2\rangle + \langle S_H h_2,S_H h_1\rangle $ (thus $D((S_H\otimes S_H)\circ F) = (H+iH)^{\otimes 2}$). In particular, this implies that $ W^T_\pi $ is always defined on $ \xi_1\otimes\cdots\otimes \xi_k $ as long as $ \xi_i \in H+iH $ for all $i$. Also, $X$ extends to a bounded map $ X: (H+iH)^{\otimes k}\to \mathcal{L}_T(H) $ since $ X_T: H+iH\to \mathcal{L}_T(H)  $ is bounded and $ X $ is the composition of multiplication and $ X_T $. When defined in this way, however, $ \varPhi(e_1\otimes \cdots \otimes e_k) \in \mathcal{L}_T(H)$ may no longer be a polynomial in $ \{X_T(e_i)\}_{i\in I} $ for a given basis $ \{e_i\}_{i\in I} $.
\end{rmk}
\fi
\subsection{A decomposition of $W_\pi^T$}
\begin{defn} \label{decomposition of P12}
	For a pair $ (\pi,k) $ with $ \pi \in \mathcal{P}_{1,2}(n)$ and $ {k}\in s(\pi) $, we decompose $ \pi $ with respect to $ k $ in the following sense: We divide $p(\pi)$ into three parts $ p^{m}_{k}(\pi):= \{\{i,j\}\in p(\pi): i<k<j\} $, $ p^l_k(\pi):= \{\{i,j\}\in p(\pi): i<j<k\} $, and $ p^{r}_{k}(\pi):= \{\{i,j\}\in p(\pi): k<i<j\} $. Denote by $ \pi^m\in \mathcal{P}_{1,2}(n) $ the partition such that $ p(\pi^m) = p_k^m(\pi) $. Let $ s^k_l(\pi^m) = \{ \{t\}\in s(\pi^m):  t<k \} $, and $s^k_r(\pi^m) = \{ \{t\}\in s(\pi^m):  t>k \} $. Denote by $ \pi^l\in \mathcal{P}_{1,2}( s^k_l(\pi^m) )$ the partition such that $ p(\pi^l) = p^l_k(\pi) $. Let $ c_{s^k_l(\pi^m)}: [|s^k_l(\pi^m)|]\to s^k_l(\pi^m) $ be the counting map. Set $ \sigma^{l} = c^{-1}_{s^k_l(\pi^m)}(\pi^l) \in \mathcal{P}_{1,2}( |s^k_l(\pi^m)| )$. We also define $ \pi^r\in \mathcal{P}_{1,2}( s^k_r(\pi^m) ) $ and $ \sigma^r\in \mathcal{P}_{1,2}( |s^k_r(\pi^m)| ) $ in the same way.
	
	The $4$-tuple $ (\pi^m,k,\sigma^l,\sigma^r) $ constructed above is called the \emph{decomposition of $\pi$ with respect to $k$} and is denoted by $ DCP(\pi,k) $. More precisely, the above procedure gives us a (bijective) map $$ DCP: \{ (\pi,k) \in \mathcal{P}_{1,2}(n)\times [n]: \{k\}\in s(\pi) \} \longrightarrow \text{dcp}(n), $$ where $ \text{dcp}(n):= \{ (\pi^m,k,\sigma^{l},\sigma^{r}): \pi^m\in \mathcal{P}_{1,2}(n),k\in \cap p(\pi^m), \sigma^{l}\in \mathcal{P}_{1,2}(|s^k_l(\pi^m)|), \sigma^{r}\in \mathcal{P}_{1,2}(|s^k_r(\pi^m)|) \} $ with $ \cap p(\pi)$ the intersection of the open intervals $\cap_{\{i,j\}\in p(\pi)}\{ m: i<m< j \} $. In fact, the inverse of $ DCP $ is given by $ (DCP)^{-1}(\pi^m,k,\sigma^l,\sigma^r) = (\pi,k) $ with $ \pi\in \mathcal{P}_{1,2}(n) $ the unique partition such that $ p(\pi) = p( \pi^m )\cup p( c_{s^l_k(\pi^m)}(\sigma^{l}) )\cup p( c_{s^r_k(\pi^m)}(\sigma^{r}) ) $.
\end{defn}

\begin{prop}\label{decomposition of W}
	Let $ \pi \in \mathcal{P}_{1,2}(n) $ and $ \{k\}\in s(\pi) $, and $ DCP(\pi,k)=( \pi^m,k,\sigma^{l},\sigma^{r} ) $ be its decomposition. Then 
	$$ W_\pi^T = W^T_{\sigma^{l}}( \text{id}_{\HH}^{\otimes |s^k_l(\pi^m)|+1}\otimes W^T_{\sigma^{r}} ) W^T_{\pi^m} = ( W^T_{\sigma^{l}}\otimes \text{id}_\HH \otimes W^T_{\sigma^{r}} )W^T_{\pi^m}.$$
\end{prop}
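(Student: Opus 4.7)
The strategy is to invoke Proposition \ref{W is independent of order}: since $W^T_\pi$ does not depend on the admissible order chosen on $p(\pi)$, I pick one that exposes the claimed factorization. Concretely, I take any admissible order $\varphi$ on $p(\pi)$ in which every middle pairing in $p^m_k(\pi)$ comes first, then every left pairing in $p^l_k(\pi)$, then every right pairing in $p^r_k(\pi)$, using the left standard order internally within each group (the internal choice does not matter).

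Admissibility of this order is straightforward: a nested containment $V<U$ requires $V$'s endpoints to straddle $U$'s, but a left (resp.\ right) pairing has both endpoints strictly below (resp.\ above) $k$, so it cannot enclose a middle pairing, and a left and a right pairing are never nested. Hence the only cross-group enclosures have a middle pairing on the outside, which the chosen order correctly places first, while the internal left standard orders are admissible by construction. Once admissibility is in hand, the product in \eqref{defn of W} splits into three consecutive blocks matching the three groups. The rightmost (first-applied) block collects the $|p^m_k(\pi)|$ middle factors and, by Proposition \ref{W is independent of order} applied to $\pi^m$, equals $W^T_{\pi^m}\colon \HH^{\otimes n}\to \HH^{\otimes|s(\pi^m)|}$.

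The heart of the argument is to identify the remaining two blocks. For a left pairing $\{a,b\}$ with $a<b<k$, the shift appearing in \eqref{defn of W}, which counts endpoints of previously ordered pairings lying below $a$, decomposes as middle-pair endpoints below $a$ plus previously processed left-pair endpoints below $a$ (right-pair endpoints all exceed $k$ and hence do not contribute). Because every index below $k$ is either in $s^k_l(\pi^m)$ or is a left endpoint of a middle pairing, the middle-pair contribution is $a-c^{-1}_{s^k_l(\pi^m)}(a)$, while the left-pair contribution coincides with the analogous $r$-count for $\sigma^l$ at the renumbered index $c^{-1}_{s^k_l(\pi^m)}(a)$. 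Substituting into \eqref{defn of W}, the shifted index becomes
\begin{equation*}
a-r^\varphi(a)=c^{-1}_{s^k_l(\pi^m)}(a)-r^{\sigma^l}\!\left(c^{-1}_{s^k_l(\pi^m)}(a)\right),
\end{equation*}
with the analogous identity for $b$. Thus the left-pair block reproduces $W^T_{\sigma^l}$ acting on the first $|s^k_l(\pi^m)|$ tensor factors. The right-pair block is handled identically, with the corresponding counting producing an additional offset of $|s^k_l(\pi^m)|+1$ that reflects the $|s^k_l(\pi^m)|$ factors coming from $s^k_l(\pi^m)$ and the $\{k\}$-factor preceding $s^k_r(\pi^m)$ in the output of $W^T_{\pi^m}$, yielding $\text{id}_\HH^{\otimes|s^k_l(\pi^m)|+1}\otimes W^T_{\sigma^r}$.

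Assembling the three blocks gives $W^T_\pi = (W^T_{\sigma^l}\otimes \text{id}_\HH\otimes W^T_{\sigma^r})W^T_{\pi^m}$, and the two forms in the statement are equivalent because $W^T_{\sigma^l}$ and $\text{id}_\HH^{\otimes|s^k_l(\pi^m)|+1}\otimes W^T_{\sigma^r}$ act on disjoint tensor factors and therefore commute. The main technical obstacle is the shift identity above: one must carefully count middle-pair endpoints below a given left or right endpoint and track the offsets induced by the renumbering bijections $c_{s^k_l(\pi^m)}$ and $c_{s^k_r(\pi^m)}$. Once this bookkeeping is verified, the rest is formal.
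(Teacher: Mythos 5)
Your proposal is correct and takes essentially the same route as the paper: invoke Proposition~\ref{W is independent of order} to choose an admissible order with the middle pairings first, verify admissibility by noting that left and right pairings are never nested and that only a middle pairing can enclose a left or right one, and then read off the factorization from \eqref{defn of W}. One small inconsistency to flag: the paper orders the groups as middle--right--left, which is what makes the right block's identity offset equal to $|s^k_l(\pi^m)|+1$ as in the first displayed form; with your chosen order middle--left--right, the right block acts \emph{after} the left contractions have already reduced the first $|s^k_l(\pi^m)|$ factors to $|s(\sigma^l)|$ of them, so the offset at that stage is $|s(\sigma^l)|+1$, not $|s^k_l(\pi^m)|+1$. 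Since the left and right blocks act on disjoint tensor ranges and commute, both orderings assemble to the same operator $(W^T_{\sigma^l}\otimes\mathrm{id}_\HH\otimes W^T_{\sigma^r})W^T_{\pi^m}$, so the conclusion stands; but the numerical offset you quote matches the paper's ordering rather than yours, and it is worth keeping the two consistent when you carry out the index bookkeeping that the paper leaves implicit.
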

\begin{proof}
	This is in fact simply Proposition \ref{W is independent of order} when choosing the correct admissible order $ \varphi $. Let again $ \pi^l = c_{s^k_l(\pi^m)}(\sigma^{l}) $, $ \pi^r = c_{s^k_r(\pi^m)}(\sigma^{r}) $ as in Definition \ref{decomposition of P12}. Let $ L_l $, $ L_m $ and $ L_r $ be the left standard order for $ \pi^l $, $ \pi^m $ and $ \pi^r $. Consider the order $ \varphi $ on $ p(\pi) $ obtained by concatenating $ L_m $, $ L_r $ and $ L_l $, i.e. if $ V,U \in p(\pi^m)$ (similarly for $ p(\pi^r) $, $ p(\pi^l) $), then $ V<_\varphi U$ if $ V<_{L} U $; if $ V\in p(\pi^m) $ and $ V\in p(\pi^r) $ (similarly if $ V\in p(\pi^r) $ and $ V\in p(\pi^l) $ ), then we set $ V<_\varphi U $. To see that $\varphi$ is admissible, one simply note that pairings in $ \pi^r $ and $ \pi^l $ are not $ < $ comparable since they do not overlap, and also that it is impossible to have $ V\in p(\pi^l)\cup p(\pi^r) $ and $ U\in p(\pi^m) $ such that $ V<U $, because $ k $ is in the middle of $ U $ but outside of $ V $. Now by Proposition \ref{W is independent of order}, computing the indices $ r^{\pi,\varphi}_k(m) $, one obtain
	$$ W^T_\pi = W^T_{\pi,\varphi} = ( W^T_{\sigma^{l}}\otimes \text{id}_\HH \otimes W^T_{\sigma^{r}} )W^T_{\pi^m}.$$
\end{proof}

\section{Computation for the Conjugate System}
\subsection{A formula for $ \partial_i\varPhi(\xi_1\otimes \cdots \otimes \xi_n) $}
We now fix a basis $ (e_1,\cdots,e_d) $ of $\HH$, and let $ (f_1,\cdots,f_d) $ be the dual basis of $ (e_1,\cdots,e_d) $, i.e. $ f_i\in \HH $ and $ \langle f_i,e_j\rangle =\delta_{i,j}$.

\begin{defn}
	For each $n\geq 1$, $ 1\leq k< n$ and $ 1\leq i\leq d $, we define the 'partial gradient' operator $\nabla_i^k: \HH^{\otimes n}\to \FF^{\text{fin}}(\HH)\otimes \FF^{\text{fin}}(\HH) $, with $ \FF^{\text{fin}}(\HH) = \oplus_{\text{alg},n=0}^{\infty} \HH^{\otimes n} $ the algebraic direct sum.
	\begin{gather*}
		\nabla_i^k(\xi_1\otimes \cdots \xi_n)=\langle f_i,\xi_k\rangle (\xi_1\otimes \cdots \otimes\xi_{k-1})\otimes (\xi_{k+1}\otimes \cdots \otimes \xi_{n})\in \FF^{\text{fin}}(\HH) \otimes \FF^{\text{fin}}(\HH) ,\quad\forall 2\leq k\leq n-1,\\
		\nabla_i^1(\xi_1\otimes \cdots \xi_n)=	\langle f_i,\xi_1\rangle \Omega\otimes (\xi_2\otimes \cdots \otimes \xi_n),\\
		\nabla_i^n(\xi_1\otimes \cdots \xi_n)=	\langle f_i,\xi_n\rangle (\xi_1\otimes \cdots \otimes \xi_{n-1})\otimes \Omega.
	\end{gather*}
	We also define the partial form of the free difference quotient of variables $ x_1,\cdots,x_n $: for a monomial $ x_{i_1}\cdots x_{i_m} $ with $ m\geq k $, we define
	$$ \partial_i^kx_{i_1}\cdots x_{i_m} := \delta_{i, i_k}x_{i_1}\cdots x_{i_{k-1}}\otimes x_{i_{k+1}}\cdots x_{i_m}. $$
	In particular, when the variables are $ X_T(e_1),\cdots,X_T(e_d) $, we have the relation $\partial_i^k\circ X = (X\otimes X)\nabla_i^k$, or equivalently
	\begin{equation}\label{partial i and nabla}
		\partial_i^k\circ X = (\varPhi^{-1}X\otimes \varPhi^{-1}X)\nabla_i^k,\quad \forall k.
	\end{equation}
	when $ \partial_i^k $ is considered as a map from polynomials to $ \FF_T(\HH)\otimes \FF_T(\HH) $.
\end{defn}

In the following proposition, we will also abuse the notation and consider $ \partial_i $ as a map from polynomials in $ X_T(e_1),\cdots,X_T(e_d) $ to $\FF_T(\HH)\otimes \FF_T(\HH)$. (Since we can always identify $ \mathcal{L}_T(H) $ with its image in $L^2(  \mathcal{L}_T(H),\varphi_\Omega)= \FF_T(\HH) $, this should cause no ambiguity.)

\begin{prop}\label{formula for partial i}
	For each $1\leq i\leq d$,
	\begin{equation}
		\partial_i\varPhi(\xi_1\otimes \cdots \otimes \xi_n) = \sum_{\pi\in \mathcal{P}_{1,2}(n)}\sum_{k\in \cap p(\pi)}(-1)^{|p(\pi)|}\nabla^{\ps(k,\pi)}_iW^T_\pi( \xi_1\otimes \cdots \otimes \xi_n)\in \FF_T(\HH)\otimes \FF_T(\HH),
	\end{equation}
	where $ \bigcap p(\pi) := \bigcap_{\{i,j\}\in \pi}]i,j[ $ is the intersection of the open intervals $]i,j[ = \{k: i<k<j\}$, and $\ps(k,\pi)$ is the position of $k$ in the singletons $s(\pi)$, i.e. if $ s(\pi)=\{\{s_1\},\cdots,\{s_m\}\} $ with $ s_1<\cdots<s_m $, then $ s_{\ps(k,\pi)} = k $.
\end{prop}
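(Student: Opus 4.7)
The plan is to start from the $T$-Wick formula (Theorem \ref{Wick product}), apply $\partial_i$ term by term, and then reindex the resulting sum using the bijection $DCP$ (Definition \ref{decomposition of P12}) together with the factorization of $W^T_\pi$ (Proposition \ref{decomposition of W}). The three key steps are: (i) differentiate the Wick-formula expression of $\varPhi(\xi_1\otimes\cdots\otimes \xi_n)$; (ii) for each pair $(\pi, k)$ with $\{k\}\in s(\pi)$, factor $W^T_\pi$ and evaluate the partial derivative $\partial_i^{\ps(k,\pi)}$; (iii) sum over the ``outer'' data $(\sigma^l, \sigma^r)$ and collapse the inner sums via the $T$-Wick formula in reverse.

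Concretely, applying $\partial_i$ to Theorem \ref{Wick product} and expanding $\partial_i = \sum_\ell \partial_i^\ell$ with $\ell$ ranging over the $|s(\pi)|$ tensor positions of $W^T_\pi(\xi_1\otimes\cdots\otimes \xi_n)$, and parameterizing $\ell$ by the singleton $k\in s(\pi)$ through $\ell = \ps(k,\pi)$, yields
$$ \partial_i \varPhi(\xi_1\otimes\cdots\otimes \xi_n) = \sum_{\pi\in \mathcal{P}_{1,2}(n)} \sum_{k\in s(\pi)} (-1)^{|p(\pi)|}\, \partial_i^{\ps(k,\pi)}\bigl( X W^T_\pi(\xi_1\otimes\cdots\otimes \xi_n)\bigr). $$

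For each pair $(\pi, k)$, I apply $DCP$ to get $(\pi^m, k, \sigma^l, \sigma^r)$ with $k \in \cap p(\pi^m)$ and use Proposition \ref{decomposition of W} to factor $W^T_\pi = (W^T_{\sigma^l}\otimes \mathrm{id}_\HH \otimes W^T_{\sigma^r})\, W^T_{\pi^m}$. Decomposing $W^T_{\pi^m}(\xi_1\otimes\cdots\otimes \xi_n) = \sum_j u^j\otimes v^j \otimes w^j$ with factor sizes $|s^k_l(\pi^m)|$, $1$, and $|s^k_r(\pi^m)|$, the middle $\mathrm{id}_\HH$ sits at position $\ps(k,\pi) = |s(\sigma^l)|+1$ in the output of $W^T_\pi$, because the singletons of $\sigma^l$ are exactly those of $\pi$ lying strictly left of $k$. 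Using \eqref{partial i and nabla} and evaluating on $\Omega\otimes\Omega$,
$$ \partial_i^{\ps(k,\pi)}\bigl(X W^T_\pi(\xi_1\otimes\cdots\otimes \xi_n)\bigr) = \sum_j \bigl(X W^T_{\sigma^l}(u^j)\Omega\bigr) \otimes \langle f_i, v^j\rangle\, \bigl(X W^T_{\sigma^r}(w^j)\Omega\bigr). $$
Then, for fixed $(\pi^m, k)$, one sums over $(\sigma^l, \sigma^r)$ with the sign $(-1)^{|p(\sigma^l)|+|p(\sigma^r)|}$ extracted from $(-1)^{|p(\pi)|}=(-1)^{|p(\pi^m)|+|p(\sigma^l)|+|p(\sigma^r)|}$. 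By the $T$-Wick formula applied in reverse,
$$ \sum_{\sigma^l\in \mathcal{P}_{1,2}(|s^k_l(\pi^m)|)}(-1)^{|p(\sigma^l)|}\, X W^T_{\sigma^l}(u^j)\Omega = \varPhi(u^j)\Omega = u^j, $$
and similarly the sum over $\sigma^r$ collapses to $w^j$. Hence the contribution of each $(\pi^m, k)$ equals $(-1)^{|p(\pi^m)|}\sum_j u^j\otimes \langle f_i, v^j\rangle w^j = (-1)^{|p(\pi^m)|}\, \nabla_i^{\ps(k,\pi^m)} W^T_{\pi^m}(\xi_1\otimes\cdots\otimes \xi_n)$, using $\ps(k,\pi^m) = |s^k_l(\pi^m)|+1$. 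Relabeling $\pi^m$ as $\pi$ gives exactly the proposition.

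The main obstacle is purely positional bookkeeping: one must verify carefully that $\ps(k,\pi) = |s(\sigma^l)|+1$ so that the split induced by $\partial_i^{\ps(k,\pi)}$ aligns with the middle $\mathrm{id}_\HH$ produced by Proposition \ref{decomposition of W}, and that the singletons of $\sigma^l$ correspond bijectively to the singletons of $\pi$ strictly left of $k$ (and similarly on the right). Both facts follow directly from Definition \ref{decomposition of P12} and Proposition \ref{decomposition of W}, but the accounting must be done with care to ensure the reindexing $\sum_{\pi}\sum_{k\in s(\pi)}= \sum_{\pi^m}\sum_{k\in \cap p(\pi^m)}\sum_{\sigma^l,\sigma^r}$ is exact.
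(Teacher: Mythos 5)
Your proposal is correct and follows essentially the same argument as the paper: differentiate the $T$-Wick formula term by term, factor $W^T_\pi$ via $DCP$ together with Proposition \ref{decomposition of W}, and collapse the inner $(\sigma^l,\sigma^r)$ sums by applying the $T$-Wick formula in reverse. Your positional identity $\ps(k,\pi)=|s(\sigma^l)|+1$ together with $\ps(k,\pi^m)=|s^k_l(\pi^m)|+1$ is exactly the right bookkeeping (singletons of $\pi$ strictly to the left of $k$ are precisely the singletons of $\pi^l$, hence of $\sigma^l$), and it yields $\nabla_i^{\ps(k,\pi)}W^T_\pi=(W^T_{\sigma^l}\otimes W^T_{\sigma^r})\nabla_i^{\ps(k,\pi^m)}W^T_{\pi^m}$ as in the paper's proof.
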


\begin{rmk}
	We note that this formula can actually be considered as a generalization of Proposition 5.1 in \cite{MS23}.
\end{rmk}

\begin{proof}
	Using the $T$-Wick product (Theorem \ref{Wick product}), and the 'partial' free difference quotients $\partial_i^k$,
	\begin{align*}
		\partial_i\varPhi(\xi_1\otimes \cdots \otimes \xi_n) =& \partial_i X\bigg(\sum_{\pi\in \mathcal{P}_{1,2}(n)} (-1)^{|p(\pi)|}W_\pi^T(\xi_1\otimes\cdots\otimes \xi_n)\bigg)\\
		=& \sum_{\pi\in \mathcal{P}_{1,2}(n)} (-1)^{|p(\pi)|} \sum_{t=1}^{|s(\pi)|}\partial_i^t XW_\pi^T(\xi_1\otimes\cdots\otimes \xi_n)\\
		\text{\eqref{partial i and nabla}}=& \sum_{\pi\in \mathcal{P}_{1,2}(n)} (-1)^{|p(\pi)|} \sum_{t=1}^{|s(\pi)|}(\varPhi^{-1}X\otimes \varPhi^{-1}X)\nabla_i^t W_\pi^T(\xi_1\otimes\cdots\otimes \xi_n)\\
		=& \sum_{\pi\in \mathcal{P}_{1,2}(n)} (-1)^{|p(\pi)|} (\varPhi^{-1}X\otimes \varPhi^{-1}X)\sum_{k\in s(\pi)}\nabla_i^{\ps(k,\pi)} W_\pi^T(\xi_1\otimes\cdots\otimes \xi_n) 
	\end{align*}
	Now, for each $ k\in s(\pi) $, let $ DCP(\pi,k) = (\pi^m,k,\sigma^{l},\sigma^{r}) $. Apply Proposition \ref{decomposition of W},
	\begin{align*}
		\nabla_i^{\ps(k,\pi)} W_\pi^T(\xi_1\otimes\cdots\otimes \xi_n) &= \nabla_i^{\ps(k,\pi)} ( W^T_{\sigma^{l}}\otimes \text{id}_\HH \otimes W^T_{\sigma^{r}} )W^T_{\pi^m}\\
		&= ( W^T_{\sigma^{l}}\otimes W^T_{\sigma^{r}} ) \nabla_i^{\ps(k,\pi)-|s^k_l(\pi^m)|}W^T_{\pi^m}\\
		&= ( W^T_{\sigma^{l}}\otimes W^T_{\sigma^{r}} ) \nabla_i^{\ps(k,\pi^m)}W^T_{\pi^m},
	\end{align*}
	where in the second to last line we used the fact that $ \ps(k,\pi) = |s^k_l(\pi^m)|+1 $ and that $ \text{id}_\HH $ is exactly the $  (|s^k_l(\pi^m)|+1) $-th factor in $ W^T_{\sigma^{l}}\otimes \text{id}_\HH \otimes W^T_{\sigma^{r}} $. Plug this into the expression for $ \partial_i \varPhi(\xi_1\otimes\cdots \otimes \xi_n) $, we obtain
	\begin{align*}
		&\partial_i\varPhi(\xi_1\otimes \cdots \otimes \xi_n)\\ =&(\varPhi^{-1}X\otimes \varPhi^{-1}X) \sum_{\pi\in \mathcal{P}_{1,2}(n)} (-1)^{|p(\pi)|} \sum_{k\in s(\pi)}  (W^T_{\sigma^{l}}\otimes W^T_{\sigma^{r}} ) \nabla_i^{\ps(k,\pi^m)}W^T_{\pi^m}\\
		=&  (\varPhi^{-1}\otimes \varPhi^{-1})\sum_{\pi\in \mathcal{P}_{1,2}(n)} \sum_{k\in s(\pi)} [ (-1)^{|p(\sigma^{l})|} XW^T_{\sigma^{l}}\otimes (-1)^{|p(\sigma^{r})|}XW^T_{\sigma^{r}} ] (-1)^{|p(\pi^m)|}\nabla_i^{\ps(k,\pi^m)}W^T_{\pi^m}\\
		\intertext{Since DCP is bijective}
		=&(\varPhi^{-1}\otimes \varPhi^{-1})\sum_{ \pi^m\in \mathcal{P}_{1,2}(n) }\sum_{k\in \cap p(\pi^m)}\sum_{\substack{\sigma^{l} \in \mathcal{P}_{1,2}(|s^k_l(\pi^m)|)\\ \sigma^{r} \in \mathcal{P}_{1,2}(|s^k_r(\pi^m)|)  }} [ (-1)^{|p(\sigma^{l})|} XW^T_{\sigma^{l}}\otimes (-1)^{|p(\sigma^{r})|}XW^T_{\sigma^{r}} ] (-1)^{|p(\pi^m)|}\nabla_i^{\ps(k,\pi^m)}W^T_{\pi^m}\\
		\intertext{By Theorem \ref{Wick product}}
		=&(\varPhi^{-1}\otimes \varPhi^{-1})\sum_{ \pi^m\in \mathcal{P}_{1,2}(n) }\sum_{k\in \cap p(\pi^m)} ( \varPhi\otimes \varPhi ) (-1)^{|p(\pi^m)|}\nabla_i^{\ps(k,\pi^m)}W^T_{\pi^m}\\
		=&  \sum_{ \pi^m\in \mathcal{P}_{1,2}(n) }\sum_{k\in \cap p(\pi^m)} (-1)^{|p(\pi^m)|}\nabla_i^{\ps(k,\pi^m)}W^T_{\pi^m}
	\end{align*}
\end{proof}

\subsection{A concrete formula for the conjugate system}
\begin{thm}\label{main}
	Let $ \HH$ be a finite dimensional complex Hilbert space with a standard subspace $ H\subset \HH $, and $T\in \mathcal{T}_{>}(H)$ be a crossing symmetric and braided twist on $ \HH $ with $ \|T\|=q<1 $. Let $ (e_1,\cdots,e_d) $ be a linear basis of $ \HH $. Consider the twisted Araki-Woods von Neumann algebra $ \mathcal{L}_T(H) $ with the vacuum state and generators $ X_T(e_1),\cdots,X_T(e_d) $. Then the conjugate system $ (\Xi_1,\cdots,\Xi_d) $ for $ ( X_T(e_1),\cdots,X_T(e_d) ) $ with respect to the free difference quotient exists and
	$$ \Xi_i = \sum_{n=0}^{\infty}(-1)^{n}P_{T,2n+1}^{-1}\sum_{ \pi\in B(2n+1) }(W_\pi^T)^*f_i ,\quad \forall 1\leq i\leq d, $$
	where $ (f_1,\cdots,f_d) $ is the dual basis of $ ( e_1,\cdots,e_d ) $ in $ \HH $, and $ B(2n+1)\subseteq \mathcal{P}_{1,2}(2n+1) $ is the set of incomplete matchings such that $ n+1 $ is a singleton and each $ 1\leq k\leq n $ must be paired with an element in $ \{n+2,\cdots,2n+1\} $. Here the adjoint $ (W^T_\pi)^* $ are taken on $ \HH^{\otimes t} $ with respect to the untwisted norm $  \langle \cdot,\cdot\rangle= \langle \cdot,\cdot\rangle_0 $.
\end{thm}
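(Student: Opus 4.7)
The plan is to exploit the concrete formula for $\partial_i\varPhi$ from Proposition \ref{formula for partial i} and then invert the Wick map level by level on the Fock space. By definition, the conjugate variable $\Xi_i$ must satisfy $\langle \Xi_i,\varPhi(v)\rangle_{\varphi_\Omega}=\langle \Omega\otimes\Omega,\partial_i\varPhi(v)\rangle$ for every $v\in \HH^{\otimes n}$. Since $\varPhi(v)\Omega=v$ identifies $L^2(\mathcal{L}_T(H),\varphi_\Omega)$ with $\FF_T(\HH)$, the left-hand side equals $\langle \Xi_i^{(n)},P_{T,n}v\rangle$ where $\Xi_i^{(n)}$ is the degree-$n$ component of $\Xi_i$. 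Thus it suffices to compute the right-hand side for each $n$ and read off $\Xi_i^{(n)}$.

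First I would plug Proposition \ref{formula for partial i} into the right-hand side and examine which pairs $(\pi,k)$ actually contribute. For a general $\pi\in\mathcal{P}_{1,2}(n)$, $W_\pi^T v$ lies in $\HH^{\otimes |s(\pi)|}$, and $\nabla_i^{\ps(k,\pi)}$ splits this into a tensor of two factors of degrees $\ps(k,\pi)-1$ and $|s(\pi)|-\ps(k,\pi)$. For the pairing with $\Omega\otimes\Omega$ to be nonzero both factors must have degree zero, which forces $|s(\pi)|=1$ and $\ps(k,\pi)=1$, i.e.\ $k$ is the unique singleton of $\pi$. Since $k$ must additionally lie in $\bigcap p(\pi)$, we get $n=2m+1$, $k=m+1$, and every pairing of $\pi$ must have one endpoint in $\{1,\dots,m\}$ and one in $\{m+2,\dots,2m+1\}$; this is exactly $B(2m+1)$. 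On such a $\pi$ the inner product evaluates to $\langle f_i,W_\pi^T v\rangle=\langle (W_\pi^T)^*f_i,v\rangle$, giving
\begin{equation*}
\langle \Omega\otimes\Omega,\partial_i\varPhi(v)\rangle = (-1)^m\sum_{\pi\in B(2m+1)}\langle (W_\pi^T)^*f_i,v\rangle
\end{equation*}
when $v\in \HH^{\otimes(2m+1)}$, and zero otherwise. Comparing with $\langle \Xi_i^{(n)},P_{T,n}v\rangle$ and inverting $P_{T,2m+1}$ (strictly positive by $T\in\mathcal{T}_>(H)$) yields the claimed formula component by component.

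It remains to show that the proposed series defines a vector in $\FF_T(\HH)$, which is the substantive analytic step. Here I would combine three estimates: (i) $\|W_\pi^T\|\leq \|C_1\|^m\|T\|^{\Cr(\pi)}$ from the definition of $W_\pi^T$; (ii) a count $|B(2m+1)|=m!$ since each such $\pi$ is parametrised by a bijection $\{1,\dots,m\}\to\{m+2,\dots,2m+1\}$; and (iii) $\|P_{T,2m+1}^{-1}\|$ bounded uniformly in $m$, which follows for $\|T\|=q<1$ by the standard lower bound $P_{T,n}\geq \prod_{k=1}^{\infty}(1-q^k)\cdot \mathrm{id}>0$ coming from \cite{BS94}. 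The crucial input is the crossing lower bound $\Cr(\pi)\geq m(m+1)/2$ for every $\pi\in B(2m+1)$; this I would extract from the explicit formula
\begin{equation*}
\Cr(\pi)=\sum_{\{i,j\}\in p(\pi)}|\{k\in s(\pi): i<k<j\}|+2|\{(U,V): V<U\}|+|\{(U,V): U,V\text{ cross}\}|
\end{equation*}
by noting that the unique singleton $\{m+1\}$ lies inside every pairing (contributing $m$) and that every pair of pairings in $B(2m+1)$ either nests or crosses (contributing at least one more per pair, total $\binom{m}{2}$). Together these give $\|\Xi_i^{(2m+1)}\|_T\lesssim m!\,\|C_1\|^m q^{m(m+1)/2}$, which is summable because the super-exponential decay $q^{m(m+1)/2}$ dominates $m!$.

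The main obstacle is the crossing lower bound on $B(2m+1)$; once that is in place the convergence is routine and the algebraic identity on Wick polynomials extends by linearity and density to all non-commutative polynomials in $X_T(e_1),\dots,X_T(e_d)$, giving the existence and the explicit form of the conjugate system. A minor point to verify carefully is that $\partial_i$ viewed as an unbounded operator $\FF_T(\HH)\to\FF_T(\HH)\otimes\FF_T(\HH)$ is compatible with $\partial_i$ viewed on polynomials via the isomorphism $\varPhi$, which is exactly the content of equation \eqref{partial i and nabla} already used in Proposition \ref{formula for partial i}; so the computation transfers directly.
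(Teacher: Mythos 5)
Your proof is correct and follows essentially the same route as the paper: it applies Proposition \ref{formula for partial i}, observes that pairing with $\Omega\otimes\Omega$ forces $|s(\pi)|=1$ and $k\in\bigcap p(\pi)$, hence $\pi\in B(2m+1)$, inverts $P_{T,2m+1}$ to read off $\Xi_i^{(2m+1)}$, and closes via the crossing lower bound $\Cr(\pi)\geq m(m+1)/2$. Two minor remarks: you explicitly derive that crossing bound (counting the singleton inside each of the $m$ chords plus at least $\binom{m}{2}$ nest-or-cross interactions) and use the exact count $|B(2m+1)|=m!$, both of which the paper leaves implicit or replaces by the crude $(2m+1)!$; on the other hand the paper uses the bound $\|P_{T,2m+1}^{-1}\|\leq\omega(q)^{-2m-1}$ from \cite{Bo98} rather than a uniform lower bound on $P_{T,n}$ as you propose --- I would double-check the precise reference for a uniform bound on general braided $T$, though it is immaterial here since the super-exponential factor $q^{m(m+1)/2}$ dominates either way.
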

\begin{proof}
	First, we show that the sum $ \sum_{n=0}^{\infty}(-1)^{n}P_{T,2n+1}^{-1}\sum_{ \pi\in B(2n+1) }(W_\pi^T)^*f_i $ indeed converges in $ L^2(\mathcal{L}_T(H)) = \FF_T(\HH) $. By Theorem 6 \cite{Bo98}, we have $ \| P_{T,2n+1}^{-1} \|\leq \omega(q)^{-2n-1} $ where $ \omega(q)^2 = (1-q^2)^{-1}\prod_{k=1}^{\infty}(1-q^k)(1+q^k)^{-1} $. Also, for any $ \pi\in B(2n+1) $, since each $ 1\leq k\leq n $ is paired with an element in $ \{n+2,\cdots,2n+1\} $, we have $ \Cr(\pi)\geq \frac{n(n+1)}{2} $, hence $ \|W^T_\pi\|\leq \|C_1\|^n q^{\frac{n(n+1)}{2}}\leq d^{n/2}\|S_H\|^n q^{\frac{n(n+1)}{2}}$. Therefore, we have
	\begin{align*}
		&\| \sum_{n=0}^{\infty}(-1)^{n}P_{T,2n+1}^{-1}\sum_{ \pi\in B(2n+1) }(W_\pi^T)^*f_i \|_{\FF_T(\HH)}= \| \sum_{n=0}^{\infty}(-1)^{n}P_{T,2n+1}^{-{1/2}}\sum_{ \pi\in B(2n+1) }(W_\pi^T)^*f_i \|_{\FF_0(\HH)}\\ &\leq \sum_{n=1}^{\infty}(2n+1)!\omega(q)^{-n-1/2}d^{n/2}\|S_H\|^n q^{\frac{n(n+1)}{2}}\|f_i\| <\infty
	\end{align*}
	which converges since $ q^{\frac{n(n+1)}{2}}\lesssim e^{-cn^2} $ while $ (2n+1)!\lesssim e^{c'n\ln n} $.
	
	Now, by linearity, we only need to show that for all $ \xi_1,\cdots,\xi_k \in \HH$,
	\begin{equation}\label{check Xi is conjugate system}
		\langle \Xi_i, \xi_1\otimes\cdots\otimes \xi_k\rangle_T =  \langle \Omega\otimes \Omega, \partial_i \varPhi( \xi_1\otimes\cdots\otimes \xi_k ) \rangle_{\FF_T(\HH)\otimes \FF_T(\HH)}.
	\end{equation}
	Note that for all $ \pi\in B(2n+1) $, $ \pi $ only has one singeleton $ n+1 $, hence $ (W_\pi^T)^*f_i \in \HH^{\otimes 2n+1} $, and in particular, $\langle \Xi_i, \xi_1\otimes\cdots\otimes \xi_k\rangle_T = 0  $ if $k$ is even. Similarly, we also have $ \langle \Omega\otimes \Omega, \partial_i \varPhi( \xi_1\otimes\cdots\otimes \xi_k ) \rangle_{\FF_T(\HH)\otimes \FF_T(\HH)} = 0$ if $k$ is even. So, we may assume that $ k=2m+1 $ is odd. Now, the left-hand side of equation \eqref{check Xi is conjugate system} is
	\begin{align*}
		\langle \Xi_i, \xi_1\otimes\cdots\otimes \xi_k\rangle_T=&\langle \sum_{n=0}^{\infty}(-1)^{n}P_{T,2n+1}^{-1}\sum_{ \pi\in B(2n+1) }(W_\pi^T)^*f_i,\xi_1\otimes \cdots\otimes \xi_{2m+1} \rangle_T\\
		=& \langle (-1)^m P_{T,2m+1}^{-1}\sum_{ \pi\in B(2m+1) }(W_\pi^T)^*f_i,\xi_1\otimes \cdots\otimes \xi_{2m+1} \rangle_T\\
		=&\langle \sum_{ \pi\in B(2m+1) }(W_\pi^T)^*f_i,(-1)^m\xi_1\otimes \cdots\otimes \xi_{2m+1} \rangle\\
		=&\langle  f_i,\sum_{ \pi\in B(2n+1) }(-1)^m W_\pi^T( \xi_1\otimes \cdots\otimes \xi_{2m+1} ) \rangle.
	\end{align*}
	The right-hand side of equation \eqref{check Xi is conjugate system} is
	\begin{align*}
		 &\langle \Omega\otimes \Omega, \partial_i \varPhi( \xi_1\otimes\cdots\otimes \xi_{2m+1} ) \rangle_{\FF_T(\HH)\otimes \FF_T(\HH)}\\
		 \text{(Prop. \ref{formula for partial i})}=&\langle \Omega\otimes \Omega, \sum_{\pi\in \mathcal{P}_{1,2}(2m+1)}\sum_{l\in \cap p(\pi)}(-1)^{|p(\pi)|}\nabla^{\ps(l,\pi)}_i W^T_\pi( \xi_1\otimes\cdots\otimes \xi_{2m+1} ) \rangle.
	\end{align*}
	Note that, $ \langle \Omega\otimes \Omega,\nabla^{\ps(l,\pi)}_i W^T_\pi( \xi_1\otimes\cdots\otimes \xi_{2m+1} ) \rangle $ is nonzero only when $ |s(\pi)|=1 $. But since $ l\in \cap p(\pi) $, this forces that $ l=n+1 $ and each $1\leq t\leq n $ is paired with an element in $ \{n+2,\cdots,2n+1\} $, i.e. $ \pi\in B(2n+1) $. Thus, we have
	\begin{align*}
		&\langle \Omega\otimes \Omega, \partial_i \varPhi( \xi_1\otimes\cdots\otimes \xi_{2m+1}) \rangle_{\FF_T(\HH)\otimes \FF_T(\HH)}=\langle \Omega\otimes \Omega, \sum_{\pi\in B(2m+1)}(-1)^m \nabla^{1}_iW^T_\pi( \xi_1\otimes\cdots\otimes \xi_{2m+1} )\rangle\\
		=&\langle (\nabla^{1}_i)^*(\Omega\otimes \Omega), \sum_{\pi\in B(2m+1)}(-1)^m W^T_\pi( \xi_1\otimes\cdots\otimes \xi_{2m+1} )\rangle\\
		=&\langle f_i, \sum_{\pi\in B(2m+1)}(-1)^m W^T_\pi( \xi_1\otimes\cdots\otimes \xi_{2m+1} )\rangle = \langle \Xi_i, \xi_1\otimes\cdots\otimes \xi_{2m+1}\rangle_T.
	\end{align*}
\end{proof}

While we do not know the formula for $P_{T,n}^{-1} $, it is relatively easy to compute $ W^*_T $ for $ \pi\in B(2m+1) $ using $ C_1^*\Omega = \sum_{i=1}^{d}S_H^*e_i\otimes e_i $ with $ \{e_i\}_{i} $ an orthonormal basis of $\HH$. For example, if $ \pi=\{\{1,5\},\{2,4\},\{3\}\}\in B(5) $, then $ W_{\pi} = C_1T_2C_1T_2T_3T_4 $ and $ W^*_{\pi}f = \sum_{i,j=1}^{d}T_4T_3T_2T_4(S_H^*e_i\otimes e_i\otimes S_H^*e_j\otimes e_j\otimes f) $. In general, $ W^*_\pi $ is first creating $|p(\pi)|$ copies of $\sum_{i} S_H^*e_i\otimes e_i$ on the left and then applying $ T_i $'s in the 'reverse' order.

We also note that for the $q$-Gaussian algebras, our formula in fact coincides with the formula given by \cite{MS23} using the free right annihilation operators $ a_r(e_i) $'s. This can be shown by the fact that the adjoint of $ a_r(e_i) $ taken in the $q$ norm acts on $ \HH^k $ as $ P_{qF,k+1}^{-1}a_r^*(e_i)P_{qF,k} $ (after expanding $ P_{qF,k} $ in terms of $ T_i $'s).

\subsection{Factoriality and type of $ \mathcal{L}_T(H) $}
Now, since $ (X_T(e_1),\cdots,X_T(e_d)) $ has a conjugate system, it has finite free Fisher information. If we choose $ e_1,\cdots, e_d $ to be the eigenvectors of $ \Delta_H$, then by Theorem \ref{modular data on frakA}, $ X_T(e_1),\cdots,X_T(e_d) $ are also eigenoperators for $ \sigma^\Omega $ with the same eigenvalues. Therefore we can now apply the main results of \cite{Ne17} quoted below.

\begin{thm}[\cite{Ne17}, Theorem A and B]
	Let $M$ be a von Neumann algebra with a faithful normal state $\varphi$. Suppose $M$ is generated by a finite set $ G= G^* $, and $ |G|\geq 2 $ of eigenoperators of the modular automorphism group $\sigma^\varphi$ with finite free Fisher information. Then $ (M^\varphi)'\cap M =\CC$. In particular, the centralizer $ M^\varphi$ is a $ \text{II}_1 $ factor and if $ K < \RR^\times_*$ is the closed subgroup generated by the eigenvalues of $G$ then
	$$ M  \text{ is a factor of type } \begin{cases}
		\text{III}_1,\quad \text{if } K = \RR^\times_*\\
		\text{III}_\lambda, \quad \text{if } K = \lambda^\ZZ, 0<\lambda<1\\
		\text{II}_1,\quad \text{if } K = \{1\}.
	\end{cases}$$
	Moreover, $ M^\varphi $ does not have property $\Gamma$, and if $ M $ is a type $ \text{III}_\lambda $ factor, $0<\lambda<1$, then $ M $ is full.
\end{thm}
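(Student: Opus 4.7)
The plan is to treat finite free Fisher information as a rigidity hypothesis and follow the nontracial extension of the Voiculescu--Dabrowski--Shlyakhtenko technique. First I would replace the free difference quotients $\partial_i$ by the quasi-free derivations $\eth_i$ adapted to the eigenvalue data $\sigma^\varphi_t(g_i)=\lambda_i^{it}g_i$ for $g_i\in G$; these are closable because the eigenoperator condition makes them intertwine the modular structure correctly, and finiteness of Fisher information in either formulation (the two differ by an invertible linear change) produces a conjugate system $(\Xi_i)\subset L^2(M,\varphi)$ satisfying $\eth_i^*(1\otimes 1)=\Xi_i$.

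The main step is to show $(M^\varphi)'\cap M = \CC$. Given $z$ in this relative commutant, I would combine the Leibniz rule for $\eth_i$ with the commutation $[z,y]=0$ for all $y\in M^\varphi$ and pair with $\Xi_i$ through the duality between $\eth_i$ and $\eth_i^*$. After bookkeeping of the modular weights in the inner product on $L^2(M)\otimes L^2(M^{\mathrm{op}})$, one extracts an identity asserting that the functional $p\mapsto \varphi(z\cdot p(g_1,\ldots,g_n))$ is annihilated by a large class of differential operations; a Cauchy--Schwarz / spectral gap inequality of Voiculescu type, valid under finite Fisher information, then upgrades this to $\|z-\varphi(z)\cdot 1\|_2 = 0$ and hence $z\in\CC$. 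This is the nontracial analogue of Dabrowski's theorem that finite free Fisher information forces a trivial relative commutant of the generators.

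From $(M^\varphi)'\cap M = \CC$ one obtains $Z(M) = \CC$, so $M$ is a factor, while $\varphi|_{M^\varphi}$ is a faithful normal tracial state on the nontrivial von Neumann algebra $M^\varphi$, making the latter a $\mathrm{II}_1$ factor. The type of $M$ is then determined by Connes's $T$-invariant: since the elements of $G$ are eigenoperators of $\sigma^\varphi$ whose eigenvalues generate $K$, and since $G$ generates $M$ as a von Neumann algebra, a standard approximation argument identifies $T(M)$ with $K$, and the classification of $\mathrm{III}_\lambda$ factors produces the trichotomy $\mathrm{III}_1 / \mathrm{III}_\lambda / \mathrm{II}_1$ according to whether $K$ equals $\RR^\times_*$, some $\lambda^{\ZZ}$, or $\{1\}$.

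Finally, failure of property $\Gamma$ for $M^\varphi$ and fullness of $M$ in the $\mathrm{III}_\lambda$ case both come from the same spectral gap philosophy applied to a (centralizing) central sequence $(y_n)$: the conjugate-variable estimate yields an inequality of the form $\|y_n-\varphi(y_n)\|_2^2 \leq C\sum_i \|[y_n,g_i]\|_2^2$, which forces any such sequence to be asymptotically scalar, contradicting nontriviality. The main obstacle throughout is the nontracial step~2, where the convenient tracial identities of the Dabrowski argument all break in mild but nontrivial ways, so one must carefully track the Tomita involution $S_\Omega$, the modular weights on $L^2(M)\otimes L^2(M^{\mathrm{op}})$, and the domain of $\eth_i^*$ when carrying out the formal Leibniz--duality manipulations on a common core of eigenoperator polynomials.
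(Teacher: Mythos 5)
This statement is not proved in the paper at all: it is quoted verbatim as an external result, namely Theorems A and B of \cite{Ne17}, and is used as a black box to deduce Corollary \ref{factoriality and type}. So there is no internal proof to compare against; what you have written is a reconstruction of Nelson's argument, and it should be judged on those terms.

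As a reconstruction, your outline has the right architecture (quasi-free derivations $\eth_i$ adapted to the eigenvalue data, existence of conjugate variables from finite Fisher information, a derivation/spectral-gap argument for $(M^\varphi)'\cap M=\CC$, type classification from the eigenvalue subgroup, and the same commutator estimate for property $\Gamma$ and fullness). But the central step is asserted rather than proved: the passage from ``$z$ commutes with $M^\varphi$'' to ``$\|z-\varphi(z)1\|_2=0$'' via ``a Cauchy--Schwarz / spectral gap inequality of Voiculescu type'' is precisely the entire analytic content of Dabrowski's tracial theorem and of Nelson's nontracial extension. The difficulty is not mere bookkeeping of modular weights: one must establish closability of $\eth_i$ on a core of eigenoperator polynomials, prove that $1\otimes 1$ lies in the domain of $\eth_i^*$ with the stated adjoint relation, and then derive a quantitative inequality bounding $\|y-\varphi(y)\|_2$ by commutators $\|[y,g_i]\|_2$ with constants controlled by $\Phi^*(G)$ --- you name the inequality in your last paragraph but never indicate how it is obtained. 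Two smaller imprecisions: the type trichotomy is most naturally read off from Connes' $S$-invariant (using that $S(M)=\mathrm{Sp}(\Delta_\varphi)$ once $M^\varphi$ is a factor), not the $T$-invariant as you state; and the claim that $M^\varphi$ is type $\mathrm{II}_1$ rather than a finite type $\mathrm{I}$ factor needs an argument (e.g.\ that finite Fisher information forces the generators to have diffuse distributions), which you omit. In short, the skeleton matches \cite{Ne17}, but the load-bearing estimate is missing.
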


\begin{cor}\label{factoriality and type}
	Let $ \HH$ be a complex Hilbert space with dimension $ 2\leq \text{dim}\,\HH<\infty $, $ H\subset \HH $ be a standard subspace, and $T\in \mathcal{T}_{>}(H)$ be a crossing symmetric and braided twisted on $ \HH $ with $ \|T\|=q<1 $. Consider the $T$-twisted Araki-Woods algebra $\mathcal{L}_T(H)$. Then $ (\mathcal{L}_T(H)^{\varphi_\Omega})'\cap \mathcal{L}_T(H)=\CC $ and $ \mathcal{L}_T(H)^{\varphi_\Omega} $ is a $\text{II}_1$ factor that does not have property $ \Gamma $. Moreover, if $ G < \RR^\times_*$ is the closed subgroup generated by the eigenvalues of the modular operator $\Delta_H$, then $ \mathcal{L}_T(H) $ is a factor of type
	$$\begin{cases}
		\text{III}_1,\quad \text{if } G = \RR^\times_*\\
		\text{III}_\lambda, \quad \text{if } G = \lambda^\ZZ, 0<\lambda<1\\
		\text{II}_1,\quad \text{if } G = \{1\}.
	\end{cases}$$
	If $ G = \lambda^\ZZ, 0<\lambda<1 $, then $ \mathcal{L}_T(H) $ is full. \qed
\end{cor}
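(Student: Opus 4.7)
The plan is to reduce the statement directly to the cited theorem of Nelson (Theorem A/B of \cite{Ne17}) by verifying its four hypotheses with a carefully chosen generating set $G$. The pieces we need are: (i) a finite self-adjoint set $G$ of $\sigma^{\varphi_\Omega}$-eigenoperators, (ii) $|G|\geq 2$, (iii) $G$ generates $\mathcal{L}_T(H)$, and (iv) $G$ has finite free Fisher information.

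First I would choose the basis of $\HH$ used to invoke Theorem~\ref{main} more carefully than a generic linear basis. Since $\dim \HH <\infty$ and $J_H \Delta_H J_H = \Delta_H^{-1}$, one can pick an orthonormal basis $(e_1,\ldots,e_d)$ of $\HH$ consisting of eigenvectors of $\Delta_H$, say $\Delta_H e_i = \lambda_i e_i$, with the additional property that $J_H$ permutes the basis: $J_H e_i = e_{\bar\imath}$ (pairing up the eigenspaces $E_\lambda$ and $E_{\lambda^{-1}}$ for $\lambda\neq 1$, and choosing a real basis inside $E_1$). Set $G = \{X_T(e_1),\ldots,X_T(e_d)\}$. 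By Theorem~\ref{modular data on frakA}, $\Delta_\Omega^{it}$ is the second quantization of $\Delta_H^{it}$, so $\sigma^{\varphi_\Omega}_t(X_T(e_i)) = \lambda_i^{it} X_T(e_i)$, which gives (i): each element of $G$ is an eigenoperator of $\sigma^{\varphi_\Omega}$ with eigenvalue $\lambda_i$. Moreover $X_T(e_i)^* = X_T(S_H e_i) = \lambda_i^{1/2} X_T(e_{\bar\imath})$ is a scalar multiple of an element of $G$, so $G = G^*$ up to nonzero scalars — equivalently $\operatorname{span} G = \operatorname{span} G^*$ — which is exactly what Nelson's hypothesis needs. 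Condition (ii) is immediate from $\dim\HH\geq 2$, and (iii) holds because $\{X_T(\xi):\xi\in H\}$ already generates $\mathcal{L}_T(H)$ and these operators are $\RR$-linear combinations of the $X_T(e_i)$.

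Next, for (iv), Theorem~\ref{main} produces the conjugate system $(\Xi_1,\ldots,\Xi_d) \in \FF_T(\HH)^d$ for $G$ with respect to the free difference quotient, and its norm estimate shows each $\Xi_i$ lies in $L^2(\mathcal{L}_T(H),\varphi_\Omega)$. Hence $\Phi^*_{\varphi_\Omega}(X_T(e_1),\ldots,X_T(e_d)) = \sum_i \|\Xi_i\|_{\varphi_\Omega}^2 < \infty$. As remarked at the end of Section~2, the quasi-free difference quotients $\eth_i$ used by Nelson differ from the free difference quotients $\partial_i$ by an invertible linear transformation on the generators, so finite free Fisher information for $G$ equals finite quasi-free Fisher information, which is what \cite{Ne17} actually requires.

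Having verified all the hypotheses, Nelson's theorem applies and gives at once: $(\mathcal{L}_T(H)^{\varphi_\Omega})'\cap \mathcal{L}_T(H) = \CC$, the centralizer $\mathcal{L}_T(H)^{\varphi_\Omega}$ is a $\mathrm{II}_1$ factor without property $\Gamma$, and the type of $\mathcal{L}_T(H)$ is determined by the closed subgroup $K\leq \RR^\times_+$ generated by the eigenvalues of the elements of $G$. But by our choice of basis these eigenvalues are precisely the spectrum of $\Delta_H$, so $K = G$ in the notation of the corollary, yielding the trichotomy $\mathrm{III}_1$, $\mathrm{III}_\lambda$, $\mathrm{II}_1$ and fullness in the $\mathrm{III}_\lambda$ case ($0<\lambda<1$).

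There is no real obstacle; everything is essentially bookkeeping once Theorem~\ref{main} is in hand. The only point requiring a little care is ensuring that the generating set is simultaneously self-adjoint (up to scalars) and a set of $\sigma^{\varphi_\Omega}$-eigenoperators, which is precisely why one must take a $J_H$-invariant eigenbasis of $\Delta_H$ rather than an arbitrary linear basis of $\HH$.
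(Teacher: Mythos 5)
Your proposal is correct and takes essentially the same route as the paper: choose an eigenbasis of $\Delta_H$, invoke Theorem~\ref{main} for a conjugate system (hence finite Fisher information), observe via Theorem~\ref{modular data on frakA} that the $X_T(e_i)$ are $\sigma^{\varphi_\Omega}$-eigenoperators, and apply Nelson's Theorems A and B. Your additional care in choosing a $J_H$-invariant eigenbasis and checking that $G$ is self-adjoint up to scalars (via $X_T(e_i)^* = X_T(S_H e_i) = \lambda_i^{1/2} X_T(e_{\bar\imath})$) is a worthwhile detail that the paper leaves implicit.
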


\section{Isomorphism via Free Monotone Transport}
\subsection{Quasi-free difference quotient $ \eth_i $}
We recall some basic notations about nontracial free monotone transport following \cite{nelson2015free} (also \cite{Ne17}).
\begin{defn}\label{quasifree assump}
	If $ X_1,\cdots,X_d \in M$ are self-adjoint and there is an $n\times n$ matrix $A>0$ which determines the modular operator of $ \varphi $:
	\begin{align*}
		\varphi(X_k X_j) &= \left[ \frac{2}{1+A} \right]_{jk} = \left[ \frac{2A}{1+A} \right]_{kj}  \\
		\sigma^\varphi_{-i}(X_j) &= \sum_{k=1}^{d}[A]_{jk}X_k.
	\end{align*}
	Then the quasi-free difference quotients are defined on $ \CC\langle X_1,\cdots,X_d \rangle $ as
	$$ \eth_i := \sum_{k=1}^{d}\left[ \frac{2}{1+A} \right]_{kj}\partial_k. $$
\end{defn}

In particular, when $ X_i = X_T(e_i) \in \mathcal{L}_T(H) $ with $ e_1,\cdots,e_d\in H $ a orthonomal basis of the real Hilbert space $ (H,\text{Re}\langle \cdot,\cdot\rangle_{\HH}) $, then $ A $ is the matrix for $ \Delta_H^{-1} $ in the basis $ (e_1,\cdots,e_d) $ and
$$ \varphi_\Omega(X_T(e_k) X_T(e_j))  =\langle e_k,e_j\rangle_T = \left[ \frac{2A}{1+A} \right]_{kj}  =\left[ \frac{2}{1+\Delta_H} \right]_{kj} .$$ (Note that $ A^T = A^{-1}=\bar{A} $.) We refer to \cite{shlyakhtenko1997free} for the detailed relation between the standard subspace $H\subset \HH$ and the covariance matrix $\frac{2A}{1+A}$ of a given basis of $H$.

The conjugate system $ (\Theta_1,\cdots,\Theta_d) $ of $ (X_1,\cdots,X_d) $ with respect to $ \eth_i $'s is again defined as $ \eth_i^*(1\otimes 1) \in L^2(M,\varphi)$ which clearly satisfies the relation
$$ \Theta_j =\sum_{k=1}^{d}\overline{\left[ \frac{2}{1+A} \right]_{kj}} \Xi_k=\sum_{k=1}^{d}\left[ \frac{2}{1+A} \right]_{jk} \Xi_k. $$

We now turn to define the norm on power series following \cite{nelson2015free}. Assume $ X_1,\cdots,X_d $ are as in Definition \ref{quasifree assump}. Let $ \mathscr{P} = \CC\langle X_1,\cdots,X_d \rangle $ be the $*$-algebra generated by $ X_1,\cdots,X_d $ in $M$. Denote $ [d]^* $ the set of all finite words in $[d]$. And for $ w = i_{1}i_2\cdots i_k \in [d]^*$, we denote the length of $w$ by $ |w| := k $ and the monomial $X_w := X_{i_1}\cdots X_{i_k}$. So a general polynomial $P\in \mathscr{P}$ can be written as $ P = \sum_{w\in [d]^*} c(w)X_w $. We now consider the norm $ \|\cdot\|_R $ on $\mathscr{P}$,
$$ \| P \|_R = \sum_{w\in [d]^*}|c(w)|R^{|w|}, $$
and denote $ \mathscr{P}^{(R)} $ the completion of $ (\mathscr{P}, \|\cdot\|_R) $.

Define the $ \sigma^\varphi $-cyclic rearrangement $\rho: \mathscr{P} \to \mathscr{P} $,
$$ \rho( X_{j_1}\cdots X_{j_m} ) = \sigma^\varphi_{-i}(X_{j_m})X_{j_1}\cdots X_{j_{m-1}},$$ and define also
$$ \|P\|_{R,\sigma}= \|P\|_{R,\sigma^\varphi} := \sum_{n=0}^{\text{deg}P}\sup_{k\in \ZZ}\| \rho^k( \pi_n(P) ) \|_R \in [0,\infty],$$
where $ \pi_n(P) = \sum_{w\in [d]^*,|w|=n} c(w)X_w $ is the degree $n$ part of $ P $. The completion of $\{ P\in \mathscr{P}: \|P\|_{R,\sigma} < \infty\}$ with respect to $ \|\cdot\|_{R,\sigma} $ is denoted $ \mathscr{P}^{(R,\sigma)} $. And the set of $\sigma$-cyclically symmetric elements is denoted
$$ \mathscr{P}^{(R,\sigma)}_{c.s.}: = \{ P\in  \mathscr{P}^{(R,\sigma)}: \rho(P) = P \}. $$
In particular, for $ P\in \mathscr{P}^{(R,\sigma)}_{c.s.} $, $ \|P\|_{R,\sigma} = \|P\|_{R} $.

Consider the potential $\displaystyle V = \frac{1}{2}\sum_{j,k=1}^{d}\left[ \frac{1+A}{2} \right]_{jk}X_kX_j +W \in  \mathscr{P}^{(R)}$, we say that the state $\varphi$ satisfies the free Gibbs law with potential $V$, if it solves the Schwinger-Dyson equation
$$ \varphi( \mathcal{D}_i(V) P )= \varphi\otimes \varphi^{op}(\eth_i P),\forall 1\leq i\leq d,P\in \mathscr{P}, $$
where $ \mathcal{D}_i $ is the $\sigma$-cyclic derivative
$$ \mathcal{D}_i(X_{k_1}\cdots X_{k_n}):= \sum_{l=1}^{n}\left[\frac{2}{1+A}\right]_{jk_l}\sigma_{-i}(X_{k_{l+1}}\cdots X_{k_{n}})X_{k_1}\cdots X_{k_{l-1}}. $$

If we choose a standard subspace $ H\subseteq \HH $ with covariance $ \langle e_k,e_j\rangle = \left[\frac{2}{1+A}\right]_{jk} $ and consider the free Araki-Woods algebra $ \mathcal{L}_0(H) $ with generators $ X_0(e_i) $'s, then the vacuum state $ \varphi_\Omega $ satisfies the free Gibbs law with the potential $\displaystyle V_0 := \frac{1}{2}\sum_{j,k=1}^{d}\left[ \frac{1+A}{2} \right]_{jk}X_kX_j \in \mathscr{P}^{(R,\sigma)}_{c.s.},\forall R>0$.

Suppose now that $(M,\varphi)$ with generators $ (X_1,\cdots,X_d) $ has the free Gibbs law with potential $V$. \cite{nelson2015free} shows that when $ V$ is close to $ V_0 $ in the sense that $ W=W^* = V-V_0 \in \mathscr{P}^{(R,\sigma)}_{c.s.} $ for some $ R> 4\|A\|+1 $ and $ \|W\|_{R} = \|W\|_{R,\sigma}<C_R $ with $ C_R $ certain small constant, then there is a free monotone transport $ (H_1,\cdots,H_d)  $ with $ H_i\in \mathscr{P}^{(R-1,\sigma)}_{c.s.} $ from $ (X_0(e_1),\cdots,X_0(e_d)) $ to $ (X_1,\cdots,X_d) $ (i.e. $ (H_1(X_0(e_1),\cdots,X_0(e_d)), \cdots,H_d(X_0(e_1),\cdots,X_0(e_d)) $ and $ (X_1,\cdots,X_d) $ have the same joint distribution). In particular, $ M = W^*(X_1,\cdots,X_d) $ is isomorphic to the free Araki-Woods algebra
$$ M = W^*(X_1,\cdots,X_d) \simeq W^*( X_0(e_1),\cdots,X_0(e_d) )= \mathcal{L}_0(H). $$

One way to guarantee that the potential $ V $ is indeed in $ \mathscr{P}^{(R,\sigma)}_{c.s.} $ is through the conjugate variables with respect to $ \eth_i $'s. By the proof of Theorem 4.5 \cite{nelson2015free}, if the conjugate system $ (\Theta_1,\cdots,\Theta_d) $ exists in $ \mathscr{P}^{(R)} $, then the free Gibbs potential of $ \varphi $ is
$$ V = \mathcal{N}^{-1}\left( \sum_{j,k=1}^{d}\left[ \frac{2}{1+A} \right]_{jk}\Theta_k X_j \right), $$
where $ \mathcal{N} $ is the number operator $ \mathcal{N}(X_w) = |w|X_w $ for all $w\in [d]^*$. Moreover, we automatically have $ V\in \mathscr{P}^{(R,\sigma)}_{c.s.} $. In this case, since $$ \|W\|_{R} = \|V-V_0\|_{R}\leq \sum_{j,k=1}^{d}\bigg|\left[ \frac{2}{1+A} \right]_{jk}\bigg|\|\Theta_k-X_k\|_{R}R,$$ to invoke the free monotone transport (Corollary 3.18 \cite{nelson2015free}), it suffices to show that the difference $ \|\Theta_k-X_k\|_{R} $ is small enough for all $1\leq k\leq d$.

\subsection{Power series expansion of the conjugate system for $ \mathcal{L}_T(H)$}
We now fixed the $W^*$-probability space $ (\mathcal{L}_T(H),\varphi_\Omega) $ with  generator $ (X_1,\cdots,X_d) = (X_T(e_1),\cdots,X_T(e_d)) $ where $ e_1,\cdots,e_d\in H $ is a orthonomal basis of the real Hilbert space $ (H,\text{Re}\langle \cdot,\cdot\rangle_{\HH}) $. In particular, we have $ A = \Delta_H^{-1} $ and $\displaystyle \left[ \frac{2}{1+A} \right]_{jk} = \langle e_k,e_j\rangle  $. Therefore, by Theorem \ref{main}, the quasi-free conjugate system is
\begin{align*}
	\Theta_j &= \sum_{k=1}^{d}\left[ \frac{2}{1+A} \right]_{jk} \Xi_k= \sum_{k=1}^{d}\langle e_k,e_j \rangle \Xi_k\\
	&=\sum_{n=0}^{\infty}(-1)^{n}P_{T,2n+1}^{-1}\sum_{ \pi\in B(2n+1) }(W_\pi^T)^* \sum_{k=1}^{d}\langle e_k,e_j \rangle f_k\\
	&= \sum_{n=0}^{\infty}(-1)^{n}P_{T,2n+1}^{-1}\sum_{ \pi\in B(2n+1) }(W_\pi^T)^*e_j,
\end{align*}
where we used the fact that $ \sum_{k=1}^{d}\langle e_k,e_j \rangle f_k = e_j $. (This is because $ \langle  \sum_{k=1}^{d}\langle e_k,e_j \rangle f_k,e_i\rangle = \sum_{k=1}^d \delta_{k,i}\langle e_j,e_k \rangle = \langle e_j,e_i\rangle$ for all $ i $.)

\begin{cor}
	The conjugate system $ (\Theta_1,\cdots,\Theta_d) $ for $ (X_1,\cdots,X_d) $ with respect to the quasi-free difference quotients exists, and
	$$ \Theta_i = \sum_{n=0}^{\infty}(-1)^{n}P_{T,2n+1}^{-1}\sum_{ \pi\in B(2n+1) }(W_\pi^T)^*e_i,\quad \forall 1\leq i\leq d.\qed$$
\end{cor}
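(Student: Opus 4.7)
The plan is to deduce this corollary directly from Theorem~\ref{main} via the linear relationship between the conjugate variables for $\partial_i$ and those for $\eth_i$. Since $\eth_i = \sum_{k}\bigl[\tfrac{2}{1+A}\bigr]_{ki}\partial_k$ from Definition~\ref{quasifree assump}, taking adjoints and evaluating at $\Omega\otimes\Omega$ yields
$$\Theta_j \;=\; \sum_{k=1}^{d}\Bigl[\tfrac{2}{1+A}\Bigr]_{jk}\,\Xi_k,$$
where we use that the matrix $\tfrac{2}{1+A}$ is self-adjoint because $A>0$. This is precisely the identity already derived in the paragraph preceding the statement.

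The first step is to identify the matrix $A$ in our setting. Since $(e_1,\ldots,e_d)$ is an orthonormal basis of the real Hilbert space $(H,\mathrm{Re}\langle\cdot,\cdot\rangle_\HH)$, one has $A = \Delta_H^{-1}$ on the spanning vectors and $\bigl[\tfrac{2}{1+A}\bigr]_{jk} = \langle e_k,e_j\rangle_\HH$, which matches the covariance $\varphi_\Omega(X_T(e_k)X_T(e_j))$ computed at the start of Section~4. This fixes the scalars appearing in the linear combination above.

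The main calculation is then to substitute the formula for $\Xi_k$ from Theorem~\ref{main} into the expression for $\Theta_j$ and exchange the finite sum over $k$ with the series over $n$ and $\pi$, which is legitimate because the series for each $\Xi_k$ converges absolutely in $L^2(\mathcal{L}_T(H),\varphi_\Omega) = \FF_T(\HH)$. The inner sum over $k$ then collapses via the dual-basis identity
$$\sum_{k=1}^{d}\langle e_k,e_j\rangle_\HH\,f_k \;=\; e_j,$$
which holds since pairing both sides with an arbitrary $e_i$ gives $\langle e_j,e_i\rangle_\HH$ on each side. This replaces $(W_\pi^T)^*f_k$ summed against the covariance by $(W_\pi^T)^*e_j$, producing exactly the claimed formula.

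I expect no real obstacle here beyond bookkeeping: existence of $\Theta_j$ in $\FF_T(\HH)$ is automatic since it is a finite linear combination of the $\Xi_k$'s, whose $L^2$-convergence was already established inside the proof of Theorem~\ref{main}. The mild subtlety worth double-checking is the convention for the matrix transpose/complex conjugate when passing from $\eth_i^*$ to $\Theta_j$, but this is handled by the self-adjointness of $\tfrac{2}{1+A}$ noted above. All the combinatorial and analytic work -- the $T$-Wick formula, the formula for $\partial_i\varPhi$, and the decay bound $\|W_\pi^T\|\lesssim q^{n(n+1)/2}$ for $\pi\in B(2n+1)$ -- has already been carried out, so this corollary is effectively a change-of-basis consequence of Theorem~\ref{main}.
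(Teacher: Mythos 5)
Your proposal is correct and follows the same route as the paper: pass from the free to the quasi-free conjugate variables by the linear relation $\Theta_j=\sum_k\bigl[\tfrac{2}{1+A}\bigr]_{jk}\Xi_k$, identify $A=\Delta_H^{-1}$ with $\bigl[\tfrac{2}{1+A}\bigr]_{jk}=\langle e_k,e_j\rangle$ for an orthonormal basis of $(H,\mathrm{Re}\langle\cdot,\cdot\rangle_{\HH})$, substitute Theorem~\ref{main}, and collapse the $k$-sum via $\sum_k\langle e_k,e_j\rangle f_k=e_j$. This is exactly the computation the paper carries out in the paragraph immediately preceding the corollary (which is why the corollary is tagged with \qed and no separate proof).
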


Using the formula of $T$-Wick product, we can also formally expand $ \Theta_i $ as a power series in $ X_T(e_1),\cdots,X_T(e_d) $. Just like in \cite{MS23}, the presence of the operators $ P^{-1}_{T,k} $'s in the expression of $ \Theta_i $ prevents us from having a explicit formula of $ \Theta_i $. Nevertheless, the estimate $ \|W^T_\pi\|\leq \|C_1\|^n \|T\|^{\frac{m(m+1)}{2}}= \|C_1\|^n q^{\frac{m(m+1)}{2}} $ for $\pi\in B(2m+1)$ always guarantees the convergence of the summation of the terms for $\Theta_i $ even in the power series norm.

We again use the short notation: for each word $ w = i_1 i_2 \cdots i_k\in [d]^* $ with length $ |w|=k $, we denote $ e_w :=e_{i_1}\otimes\cdots \otimes e_{i_k}\in \HH^{\otimes k} $, $ f_w: = f_{i_1}\otimes \cdots \otimes f_{i_k}$, and $ X_w = X_T(e_{i_1})\cdots X_T(e_{i_1}) $. Note that since $ \delta_{k,j}=\text{Re}\langle e_k,e_j\rangle = \langle e_k,\frac{1+\Delta_H}{2}e_j \rangle $, the dual basis $\{f_i\}_{i\leq d}$ is $ f_i = \frac{1+\Delta_H}{2}e_i $, hence $ \|f_i\|\leq \frac{1+\|\Delta_H\|}{2}\leq \|\Delta_H\|=\|S_H\|^2 $.

Denote $ \Theta_i = \sum_{m=0}^{\infty}\sum_{w\in [d]^*,|w|=2m+1}  \alpha(w,i)e_w $, then we have the estimate as in the proof of Theorem \ref{main}
$$ | \alpha(w,i) | = |\langle f_w, (-1)^{m}P_{T,2m+1}^{-1}\sum_{ \pi\in B(2m+1) }(W_\pi^T)^*e_i\rangle_{\FF_0(\HH)}|\leq (2m+1)!\omega(q)^{-2m-1}d^{m/2}\|S_H\|^{m+2} q^{\frac{m(m+1)}{2}}.$$
Denote also $ W^T_\pi e_w = \sum_{|\nu|=|s(\pi)|}\beta(\pi,w,\nu)e_\nu $, then similarly $ |\beta(\pi,w,\nu  )|\leq \|\Delta_H\|\|W^T_\pi\|\leq \|\Delta_H\|\|C_1\|^{|p(\pi)|}< d^{|\pi|/2}\|S_H\|^{|\pi|+2} $.
Now apply Theorem \ref{Wick product}, the power series expansion of $ \Theta_i $ is: (we are in fact estimating $ \varPhi(\Theta_i) $ since $ \Theta_i \in \FF_T(\HH)$.)
\begin{align*}
	\varPhi(\Theta_i) =& X_i+\sum_{m=1}^{\infty}\sum_{|w|=2m+1}  \alpha(w,i)\sum_{\pi\in \mathcal{P}_{1,2}(2m+1)}(-1)^{|p(\pi)|}XW^T_{\pi}e_w\\
	=&  X_i+\sum_{m=1}^{\infty}\sum_{|w|=2m+1}  \alpha(w,i)\sum_{\pi\in \mathcal{P}_{1,2}(2m+1)}(-1)^{|p(\pi)|}\sum_{|\nu| = |s(\pi)|} \beta(\pi,w,\nu) X_{\nu}
\end{align*}
For any $ R>1 $, we have
\begin{align*}
	\|\varPhi(\Theta_i) - X_i\|_{R}&=\|\sum_{m=1}^{\infty}\sum_{|w|=2m+1}  \alpha(w,i)\sum_{\pi\in \mathcal{P}_{1,2}(2m+1)}(-1)^{|p(\pi)|}\sum_{|\nu| = |s(\pi)|} \beta(\pi,w,\nu) X_{\nu}\|_{R}\\  \leq& \sum_{m=1}^{\infty} d^{2m+1}(2m+1)!\omega(q)^{-2m-1}d^{m/2}\|S_H\|^{m+2} q^{\frac{m(m+1)}{2}}(2m+1)! d^{m+1/2}\|S_H\|^{2m+3}R^{2m+1}\\
	=& \sum_{m=1}^{\infty} d^{7m/2+1/2}((2m+1)!)^2 \omega(q)^{-2m-1} q^{\frac{m(m+1)}{2}}\|S_H\|^{3m+5}R^{2m+1}<\infty.
\end{align*}
For $ q<1/2 $, $ \omega(q)^{-2}=(1-q^2)\prod_{k=1}^{\infty}(1-q^k)^{-1}(1+q^k)\leq \prod_{k=1}^{\infty}(1-2^{-k})^{-1}(1+2^{-k}) =: c  $ is bounded by a universal constant. So for $ q<1/2 $, 
\begin{align*}
	\|\varPhi(\Theta_i) - X_i\|_{R}
	\leq& \sum_{m=1}^{\infty} d^{4m} (3m)^{6m} c^{3m} q^{\frac{m(m+1)}{2}}\|S_H\|^{3m+5}R^{2m+1}.
\end{align*}
which converges to $ 0 $ as $ q $ tends to $0$ for any fixed $R$. In particular, we have
$$ \lim_{\|T\|\to 0} \|\varPhi(\Theta_i) - X_i\|_{R} = 0,$$
where the limit converges uniformly for the twist $ T $ as $ \|T\|=q\to 0 $.

Therefore by Corollary 3.18 \cite{nelson2015free}, we obtain:
\begin{cor}
	For any standard subspace $ H\subset \HH $ with $ 2\leq \text{dim}\,\HH <\infty$, there is a constant $ q_H>0 $ depending on $H$, such that for any compatible crossing symmetric braided twist $ T\in \mathcal{T}_{>}(H) $ with $ \|T\|<q_H $, the $T$-twisted Araki-Woods algebra $ \mathcal{L}_T(H) $ is isomorphic to the free Araki-Woods algebra $ \mathcal{L}_{0}(H) $.\qed
\end{cor}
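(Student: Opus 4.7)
The plan is to invoke the nontracial free monotone transport of Nelson (Corollary 3.18 of \cite{nelson2015free}) by verifying its smallness hypothesis for the potential $V$ associated with the vacuum state $\varphi_\Omega$ on $\mathcal{L}_T(H)$. By the discussion preceding the corollary, it suffices to exhibit a radius $R > 4\|A\|+1 = 4\|\Delta_H^{-1}\|+1$ and show that $\|W\|_{R} = \|V-V_0\|_{R}$ can be made strictly smaller than the universal constant $C_R$ from \cite{nelson2015free} whenever $\|T\|$ is sufficiently small. Since by Theorem 4.5 of \cite{nelson2015free} one has
\[
V = \mathcal{N}^{-1}\!\left(\sum_{j,k=1}^{d}\left[\tfrac{2}{1+A}\right]_{jk}\Theta_k X_j\right),
\]
and $V_0$ is the analogous expression with $\Theta_k$ replaced by $X_k$, it is enough to control $\|\varPhi(\Theta_i) - X_i\|_R$ uniformly in $T$ as $\|T\|=q\to 0$.

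First I would take the explicit formula for $\Theta_i$ obtained in the previous corollary,
\[
\Theta_i = \sum_{n=0}^{\infty}(-1)^{n}P_{T,2n+1}^{-1}\sum_{\pi\in B(2n+1)}(W_\pi^T)^{*}e_i,
\]
and convert it into a power series in the generators $X_1,\ldots,X_d$ by applying the $T$-Wick formula (Theorem \ref{Wick product}) to each basis vector appearing in the expansion. This yields a double sum indexed by words $w\in[d]^*$ of odd length and by incomplete matchings $\pi\in\mathcal{P}_{1,2}(|w|)$, whose coefficients are bounded respectively by the $L^2$-type bounds used in the proof of Theorem \ref{main} and by $\|W^T_\pi\|\leq \|C_1\|^{|p(\pi)|}$.

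Next, I would combine the two estimates. The key quantitative input is the Bożejko bound $\|P_{T,2n+1}^{-1}\|\leq \omega(q)^{-2n-1}$ together with the decay $\|W^T_\pi\|\lesssim \|C_1\|^n q^{n(n+1)/2}$ for $\pi\in B(2n+1)$. For words of length $2m+1$ this produces a bound of the form
\[
\|\pi_{2m+1}(\varPhi(\Theta_i))\|_R \;\leq\; C(H)^m\,((2m+1)!)^{2}\,\omega(q)^{-2m-1}\,q^{m(m+1)/2}\,R^{2m+1},
\]
where $C(H)$ depends only on $d=\dim\HH$ and $\|S_H\|$. Since $q^{m(m+1)/2}$ decays like $e^{-cm^2}$ while the other factors grow only like $e^{c'm\log m}$, this series converges for every fixed $R$ as soon as $q$ is small, and crucially its sum tends to $0$ as $q\to 0$ (uniformly in $T$ with $\|T\|=q$); this is precisely the computation sketched in the excerpt just before the corollary.

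Finally, I would fix $R > 4\|\Delta_H^{-1}\|+1$, choose $q_H > 0$ small enough that the above bound makes $\|\varPhi(\Theta_i)-X_i\|_R$ — and hence $\|W\|_R = \|W\|_{R,\sigma}$, using that $V_0, V \in \mathscr{P}^{(R,\sigma)}_{c.s.}$ by Theorem 4.5 of \cite{nelson2015free} — strictly below Nelson's threshold $C_R$ for every twist with $\|T\|<q_H$. Applying Corollary 3.18 of \cite{nelson2015free} then produces self-adjoint transport elements $H_i\in \mathscr{P}^{(R-1,\sigma)}_{c.s.}$ whose joint distribution in $(\mathcal{L}_T(H),\varphi_\Omega)$ agrees with that of $(X_0(e_1),\ldots,X_0(e_d))$ in $(\mathcal{L}_0(H),\varphi_\Omega)$, giving a state-preserving isomorphism $\mathcal{L}_T(H)\simeq \mathcal{L}_0(H)$. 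The main technical obstacle is the bookkeeping in the third step: ensuring that the constants $C(H)$ and the factorial $((2m+1)!)^2$ (coming both from counting incomplete matchings and from the Wick expansion) are genuinely dominated by $q^{m(m+1)/2}$ in the $\|\cdot\|_R$-norm, since $R$ must be chosen before $q_H$ and cannot be made small. Everything else is a direct bookkeeping of constants and an invocation of the cited transport theorem.
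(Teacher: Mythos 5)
Your proposal matches the paper's argument essentially step for step: expand $\Theta_i$ into a power series in the $X_j$'s via the $T$-Wick formula, bound the degree-$(2m+1)$ coefficients using Bo\.{z}ejko's estimate $\|P_{T,2m+1}^{-1}\|\leq \omega(q)^{-2m-1}$ together with $\|W^T_\pi\|\lesssim \|C_1\|^m q^{m(m+1)/2}$, observe that the resulting series for $\|\varPhi(\Theta_i)-X_i\|_R$ tends to $0$ as $q\to 0$ because $q^{m(m+1)/2}$ overwhelms the factorial growth, and invoke Corollary 3.18 of \cite{nelson2015free}. The paper carries out exactly this bookkeeping (making your $C(H)^m$ explicit as $d^{7m/2+1/2}\|S_H\|^{3m+5}$ and noting that $\omega(q)^{-2}$ is bounded by a universal constant for $q<1/2$), so your approach is the same and correct.
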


\appendix
\section{Appendix: Non-injectivity of $ \mathcal{L}_T(H) $ for infinite dimensional $ \HH $}
In \cite{Fum01}, it is shown that if a standard subspace $ H\subset \HH $ satisfies $$ \exists C\geq 1, \frac{\text{dim}E_{\Delta_H}([1,C])\HH}{C} >\frac{16}{(1-|q|)^2}, $$
where $ E_{\Delta_H} $ is the spectral measure of $ \Delta_H $, then the $ q $-Araki-Woods algebra $ \mathcal{L}_{qF}(H) $ is non-injective for any $ -1<q<1 $. (This is later improved by \cite{nou2004non}.) We show that the same arguments in \cite{Fum01} generalize to general twist $T$ easily once we proved the following Lemma \ref{preservation operator} about preservation operators on $\FF_T(\HH)$. In this appendix, the Hilbert space $\HH$ can be either finite or infinite dimensional.  

\begin{lem}\label{preservation operator}
	Let $\HH$ be a complex Hilbert space, $ T\in \mathcal{T}_\geq $ be a twist on $\HH$, $ A\in B(\HH) $ be a bounded operator, and $ \Lambda(A) $ be the preservation operator on the full Fock space $ \FF_0(\HH)=\bigoplus_{n=0}^{\infty}\HH^{\otimes n} $, i.e. $ \Lambda(A): f_1\otimes \cdots \otimes f_n \mapsto (Af_1)\otimes \cdots \otimes f_n $, $ \Lambda(A)\Omega = 0 $.
	\begin{enumerate}[a)]
		\item For each $n$, $ \Lambda(A)R_{T,n} $ maps $ \ker P_{T,n} $ into $ \ker P_{T,n} $, and therefore defines a linear map $ \Lambda_{T}(A): \HH_{T,n}^0\to \HH_{T,n}^0 $ ($\HH_{T,n}^0 = \HH^{\otimes n}/\ker P_{T,n}$),
		$$ \Lambda_{T}(A): [\Phi_n]\mapsto [ \Lambda(A)R_{T,n}\Phi_n ],\quad \forall \Phi_n\in \HH^{\otimes n}.$$
		\item For each $n$, $ \Lambda_{T}(A):\HH_{T,n}^0\to \HH_{T,n}^0 $ is bounded and thus extends to the completion $ \HH_{T,n} $. Moreover, $ \Lambda_{T}(A) $ extends to a unique closed and densely defined operator in $ \FF_T(\HH) $. In fact, $ \Lambda_{T}(A^*) = \Lambda_{T}(A)^*$.
		\item $ \|\Lambda_T(A)|_{ \HH_{T,n}  }\| \leq \|A\|{\sum_{i=0}^{n}\|T\|^i} $. In particular, if $ \|T\|<1 $, then $\displaystyle \|\Lambda_{T}(A)\|\leq \frac{\|A\|}{{1-\|T\|}} $.
	\end{enumerate}
\end{lem}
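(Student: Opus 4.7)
Plan: All three parts are consequences of the single operator identity
\begin{equation}\label{keyplan}
P_{T,n}\,\Lambda(A)\,R_{T,n} \;=\; R_{T,n}^{*}\,\Lambda(A)\,P_{T,n} \qquad\text{on }\HH^{\otimes n},
\end{equation}
which I would prove by combining the two factorisations $P_{T,n}=R_{T,n}^{*}(1\otimes P_{T,n-1})=(1\otimes P_{T,n-1})R_{T,n}$ recorded in Section~2.1 with the trivial commutation $[\Lambda(A),1\otimes P_{T,n-1}]=0$: $\Lambda(A)=A\otimes\mathrm{id}^{\otimes(n-1)}$ acts only on the first tensor slot while $1\otimes P_{T,n-1}$ acts only on slots $2,\ldots,n$, so they live in mutually commuting subalgebras of $B(\HH^{\otimes n})$, and expanding either side of \eqref{keyplan} via the respective factorisation of $P_{T,n}$ and commuting $\Lambda(A)$ past $1\otimes P_{T,n-1}$ yields the claim.

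Part (a) is then immediate: for $\Phi\in\ker P_{T,n}$, \eqref{keyplan} gives $P_{T,n}\Lambda(A)R_{T,n}\Phi=R_{T,n}^{*}\Lambda(A)P_{T,n}\Phi=0$, so $\Lambda(A)R_{T,n}\Phi\in\ker P_{T,n}$ and $\Lambda_T(A)$ descends to $\HH_{T,n}^{0}$. The adjoint relation in (b) is a one-line calculation with \eqref{keyplan}: for $\Phi,\Psi\in\HH^{\otimes n}$,
\begin{align*}
\langle[\Phi],\Lambda_T(A)[\Psi]\rangle_T
&= \langle\Phi,P_{T,n}\Lambda(A)R_{T,n}\Psi\rangle
= \langle\Phi,R_{T,n}^{*}\Lambda(A)P_{T,n}\Psi\rangle\\
&= \langle\Lambda(A^{*})R_{T,n}\Phi,P_{T,n}\Psi\rangle
= \langle\Lambda_T(A^{*})[\Phi],[\Psi]\rangle_T,
\end{align*}
so that, once the norm bound of (c) is secured on each graded piece, $\Lambda_T(A)=\bigoplus_n\Lambda_T(A)|_{\HH_{T,n}}$ extends from the algebraic direct sum $\bigoplus_n\HH_{T,n}^{0}$ to a closed densely defined operator on $\FF_T(\HH)$ with adjoint $\Lambda_T(A^{*})$.

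The substantive step is (c). The plan is to realise $\Lambda_T(A)$ as a sandwich of $A$ between an annihilation and a creation map, thereby reducing the norm estimate to a single spectral-radius computation for $\Lambda_T(\mathrm{id}_\HH)$. Fixing an orthonormal basis $\{e_\alpha\}$ of $\HH$, the elementary identity $\sum_\alpha\langle e_\alpha,\psi\rangle Ae_\alpha=A\psi$ applied in the first tensor slot of a representative yields
$$\Lambda_T(A)=\tilde a_T^{*}\circ(A\otimes\mathrm{id}_{\HH_{T,n-1}})\circ\tilde a_T,\qquad \tilde a_T\colon\HH_{T,n}\to\HH\otimes\HH_{T,n-1},\ \xi\mapsto\sum_\alpha e_\alpha\otimes a_T(e_\alpha)\xi,$$
with adjoint $\tilde a_T^{*}(\sum_\alpha h_\alpha\otimes\eta_\alpha)=\sum_\alpha a_T^{*}(h_\alpha)\eta_\alpha$ (both maps being basis-independent). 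Since $\|A\otimes\mathrm{id}\|=\|A\|$ on the Hilbert tensor product, submultiplicativity combined with $\|\tilde a_T^{*}\|=\|\tilde a_T\|$ gives $\|\Lambda_T(A)|_{\HH_{T,n}}\|\le\|A\|\,\|\tilde a_T^{*}\tilde a_T\|=\|A\|\,\|\Lambda_T(\mathrm{id}_\HH)|_{\HH_{T,n}}\|$, the second equality being a direct check on representatives. By (b), $\Lambda_T(\mathrm{id}_\HH)$ is self-adjoint on $\HH_{T,n}$, and by construction it is the operator induced by $R_{T,n}$ on the $R_{T,n}$-invariant quotient $\HH^{\otimes n}/\ker P_{T,n}$ (invariance being the $A=\mathrm{id}$ case of (a)); self-adjointness forces $\|\Lambda_T(\mathrm{id}_\HH)|_{\HH_{T,n}}\|$ to equal its spectral radius, and since spectra can only shrink under passage to an invariant-subspace quotient, this is dominated by the spectral radius of $R_{T,n}\in B(\HH^{\otimes n})$, itself at most $\|R_{T,n}\|\le\sum_{i=0}^{n-1}\|T\|^{i}$. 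The main obstacle will be the a-priori step of checking that $\Lambda_T(\mathrm{id}_\HH)$ is indeed bounded on $\HH_{T,n}$ so that the spectral-radius argument actually applies; this is handled by a direct Cauchy--Schwarz estimate using \eqref{keyplan} with $A=\mathrm{id}$, after which summing the geometric series gives the stated global bound $\|\Lambda_T(A)\|\le(1-\|T\|)^{-1}\|A\|$ on $\FF_T(\HH)$ whenever $\|T\|<1$.
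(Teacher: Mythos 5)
Parts (a) and the adjoint computation in (b) match the paper's argument essentially verbatim (both rest on the identity $P_{T,n}\Lambda(A)R_{T,n}=R_{T,n}^*\Lambda(A)P_{T,n}$, which follows from the two factorisations of $P_{T,n}$ and $[\Lambda(A),1\otimes P_{T,n-1}]=0$). The sandwich observation $\Lambda_T(A)=\tilde a_T^*(A\otimes\mathrm{id})\tilde a_T$ is a correct and pleasant reduction to the case $A=\mathrm{id}$, and the self-adjointness and positivity of $\Lambda_T(\mathrm{id})=\tilde a_T^*\tilde a_T$ are also fine. The problem is in how you propose to bound $\|\Lambda_T(\mathrm{id})\|$.

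The step ``since spectra can only shrink under passage to an invariant-subspace quotient, this is dominated by the spectral radius of $R_{T,n}$'' is not valid here. The spectral-inclusion principle you invoke applies when $X/Y$ carries the quotient Banach norm inherited from $X$. But $\HH_{T,n}$ is \emph{not} the Banach-space quotient $\HH^{\otimes n}/\ker P_{T,n}$: on the vector space $\HH_{T,n}^0=\HH^{\otimes n}/\ker P_{T,n}$ the inner product is $\langle[\Phi],[\Psi]\rangle_T=\langle\Phi,P_{T,n}\Psi\rangle$, which induces a strictly weaker norm in general, and $\HH_{T,n}$ is the completion in that weaker norm — which may strictly contain the Banach quotient. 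The intertwiner $q\colon\HH^{\otimes n}\to\HH_{T,n}$ (quotient followed by inclusion into the completion) has dense range but is not in general surjective, and a dense-range intertwining gives no control on $\mathrm{sp}(\Lambda_T(\mathrm{id}))$ in terms of $\mathrm{sp}(R_{T,n})$. So that portion of the argument is a genuine gap.

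You do hedge by saying the boundedness is ``handled by a direct Cauchy–Schwarz estimate,'' but that estimate is exactly where the content lies and is not a one-line Cauchy–Schwarz. What is actually needed (and what the paper proves) is the operator inequality $P_{T,n}R_{T,n}=R_{T,n}^*(1\otimes P_{T,n-1})R_{T,n}\le\|R_{T,n}\|\,P_{T,n}$, obtained by noting that $Q:=P_{T,n}R_{T,n}\ge0$ is self-adjoint, $Q^2=P_{T,n}R_{T,n}R_{T,n}^*P_{T,n}\le\|R_{T,n}\|^2P_{T,n}^2$, and invoking operator monotonicity of the square root — together with the companion bound $P_{T,n}\le\|R_{T,n}\|\,(1\otimes P_{T,n-1})$ proved the same way. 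Once you have $P_{T,n}R_{T,n}\le\|R_{T,n}\|P_{T,n}$, the bound $\|\Lambda_T(\mathrm{id})\|\le\|R_{T,n}\|$ follows by positivity of $\Lambda_T(\mathrm{id})$ from
$\langle[\Phi],\Lambda_T(\mathrm{id})[\Phi]\rangle_T=\langle\Phi,P_{T,n}R_{T,n}\Phi\rangle\le\|R_{T,n}\|\langle\Phi,P_{T,n}\Phi\rangle=\|R_{T,n}\|\,\|[\Phi]\|_T^2$,
and then your sandwich gives (c) for general $A$ with $\|R_{T,n}\|\le\sum_{i=0}^{n-1}\|T\|^i$. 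If you replace the spectral-radius detour by this direct operator inequality, your argument becomes a correct (and slightly streamlined) variant of the paper's; as written, the spectral-radius step does not hold, and the ``direct estimate'' fallback is not actually carried out.
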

\begin{proof}
	\begin{enumerate}[a)]
		\item Recall that by the self-adjointness of $ P_{T,n} $, and the relation $ P_{T,n} = (1\otimes P_{T,n-1})R_{T,n} $, we have also $  P_{T,n} = R^*_{T,n}(1\otimes P_{T,n-1}) $. Now, for a $ \Phi_n \in \ker P_{T,n} $, we have
		$$ P_{T,n}\Lambda(A)R_{T,n}\Phi_n = R^*_{T,n}(1\otimes P_{T,n-1})\Lambda(A)R_{T,n}\Phi_n = R^*_{T,n}\Lambda(A)(1\otimes P_{T,n-1})R_{T,n}\Phi_n =  R^*_{T,n}\Lambda(A)P_{T,n}\Phi_n=0. $$
		
		\item By the positivity of $ P_{T,n} $, we have $ P^2_{T,n} = (1\otimes P_{T,n-1})R_{T,n}R^*_{T,n}(1\otimes P_{T,n-1})\leq \| R_{T,n} \|^2(1\otimes P_{T,n-1}^2)\leq (\sum_{i=0}^{n}\|T\|^i)^2(1\otimes P_{T,n-1}^2) $. Therefore, $$ P_{T,n} \leq (\sum_{i=0}^{n}\|T\|^i)(1\otimes P_{T,n-1}).$$
		Now, for $ \Phi_n\in \HH^{\otimes n} $, we have
		\begin{align*}
			\| \Lambda_{T}(A)[\Phi_n] \|^2_{T} &= \langle  \Lambda(A)R_{T,n}\Phi_n, P_{T,n}\Lambda(A)R_{T,n}\Phi_n\rangle_{0} \leq  (\sum_{i=0}^{n}\|T\|^i)\langle  \Lambda(A)R_{T,n}\Phi_n, (1\otimes P_{T,n-1})\Lambda(A)R_{T,n}\Phi_n\rangle\\
			&=(\sum_{i=0}^{n}\|T\|^i)\langle  (1\otimes P_{T,n-1}^{1/2})R_{T,n}\Phi_n, 
			\Lambda(A)^*\Lambda(A)(1\otimes P_{T,n-1}^{1/2})R_{T,n}\Phi_n\rangle\\
			&\leq (\sum_{i=0}^{n}\|T\|^i)\|\Lambda(A)\|^2\langle  (1\otimes P_{T,n-1}^{1/2})R_{T,n}\Phi_n, 
			(1\otimes P_{T,n-1}^{1/2})R_{T,n}\Phi_n\rangle\\
			&=(\sum_{i=0}^{n}\|T\|^i)\|A\|^2\langle  \Phi_n, 
			R_{T,n}^*(1\otimes P_{T,n-1})R_{T,n}\Phi_n\rangle.
		\end{align*}
		It remains to estimate the positive operator (on $\HH^{\otimes n}$) $$ R_{T,n}^*(1\otimes P_{T,n-1})R_{T,n}= R_{T,n}^*P_{T,n} = P_{T,n}R_{T,n}\geq 0. $$
		But again, we have
		$$ (R_{T,n}^*(1\otimes P_{T,n-1})R_{T,n})^2 = P_{T,n}R_{T,n}R_{T,n}^*P_{T,n}\leq  \|R_{T,n}\|^2P_{T,n}^2,$$
		hence $ R_{T,n}^*(1\otimes P_{T,n-1})R_{T,n}\leq (\sum_{i=0}^{n}\|T\|^i) P_{T,n} $. Plug this into the previous inequality for $\| \Lambda_{T}(A)[\Phi_n] \|^2_{T} $, we obtain
		$$ \| \Lambda_{T}(A)[\Phi_n] \|^2_{T}\leq  (\sum_{i=0}^{n}\|T\|^i)^2\|A\|^2\langle  \Phi_n, 
		P_{T,n}\Phi_n\rangle = (\sum_{i=0}^{n}\|T\|^i)^2\|A\|^2\|[\Phi_n]\|_T^2, $$
		and therefore $ \|\Lambda_{T}(A)\big|_{\HH_{T,n}}\| \leq (\sum_{i=0}^{n}\|T\|^i)\|A\| $.
		
		Finally, it remains to show that $ \Lambda_{T}(A^*) $ is the adjoint of $ \Lambda_{T}(A) $ (and since they are both densely defined, they must also be closable.) But this follows from direct calculation: for all $ [\Phi_n],[\Psi_n]\in \HH_{T,n} $,
		\begin{align*}
			\langle  [\Psi_n],\Lambda_{T}(A)[\Phi_n] \rangle_T &= \langle \Psi_n, P_{T,n}\Lambda(A)R_{T,n}\Phi_n \rangle = \langle  \Psi_n, R^*_{T,n}(1\otimes P_{T,n-1})\Lambda(A)R_{T,n}\Phi_n \rangle\\
			&= \langle  \Psi_n, R^*_{T,n}\Lambda(A)(1\otimes P_{T,n-1})R_{T,n}\Phi_n \rangle =\langle  \Psi_n, R^*_{T,n}\Lambda(A)P_{T,n}\Phi_n \rangle \\&=\langle \Lambda(A^*)R_{T,n}\Psi_n,P_{T,n}R_{T,n} \rangle = \langle \Lambda_{T}(A^*)[\Psi_n], [\Phi_n]\rangle_T,
		\end{align*}
		where again we used the definition formula $ P_{T,n} = R^*_{T,n}(1\otimes P_{T,n-1}) = (1\otimes P_{T,n-1})R_{T,n} $, and the fact that $ \Lambda(A) $ commutes with $ (1\otimes P_{T,n-1}) $.
	\end{enumerate}
\end{proof}
As a direct corollary, we have the following bound for product of annihilation and creation operators similar to Lemma 2.1 \cite{Fum01}.
\begin{lem}\label{Hiai Lem 2.1}
	Assume that $T$ is a twist with $ \|T\|<1 $. If $ e_1,\cdots,e_m $ are orthonormal vectors in $ \HH $, then
	$$ \|\sum_{i=1}^{m} a_T^*(e_i)a_T(e_i)\|\leq \frac{1}{{1-\|T\|}}. $$
\end{lem}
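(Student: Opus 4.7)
The plan is to recognize the operator $\sum_{i=1}^{m} a_T^*(e_i)a_T(e_i)$ as a special case of the preservation operator $\Lambda_T(A)$ from the previous lemma, with $A$ taken to be the orthogonal projection onto $\mathrm{span}\{e_1,\ldots,e_m\}$. Then the desired bound follows immediately from part c) of that lemma.

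More concretely, let $P \in B(\HH)$ denote the orthogonal projection onto $\mathrm{span}\{e_1,\ldots,e_m\}$, so that $P = \sum_{i=1}^{m} |e_i\rangle\langle e_i|$. The first step is to verify, directly on the full Fock space, the identity
\begin{equation*}
\sum_{i=1}^{m} e_i \otimes a(e_i)(g_1 \otimes \cdots \otimes g_n) = \sum_{i=1}^{m}\langle e_i,g_1\rangle\, e_i \otimes g_2 \otimes \cdots \otimes g_n = \Lambda(P)(g_1 \otimes \cdots \otimes g_n),
\end{equation*}
using only the orthonormality of the $e_i$'s. This is essentially a one-line unpacking of the definitions of $a(\cdot)$ and $\Lambda(\cdot)$.

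Next I would transfer this identity to the twisted setting. Using the formula $a_T(h)[\Phi_n] = [a(h) R_{T,n}\Phi_n]$ recalled in Section~2 and the definition $a_T^*(h)[\Psi_n] = [h \otimes \Psi_n]$, I compute
\begin{equation*}
\sum_{i=1}^{m} a_T^*(e_i) a_T(e_i)[\Phi_n] = \Big[\sum_{i=1}^{m} e_i \otimes a(e_i) R_{T,n}\Phi_n\Big] = \bigl[\Lambda(P) R_{T,n}\Phi_n\bigr] = \Lambda_T(P)[\Phi_n],
\end{equation*}
which identifies $\sum_{i=1}^{m} a_T^*(e_i) a_T(e_i) = \Lambda_T(P)$ on each dense subspace $\HH_{T,n}^0$, hence on all of $\FF_T(\HH)$.

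Finally, I apply Lemma \ref{preservation operator} c) with $A = P$: since $\|P\| = 1$ (as $P$ is a nonzero projection) and $\|T\| < 1$,
\begin{equation*}
\Big\|\sum_{i=1}^{m} a_T^*(e_i) a_T(e_i)\Big\| = \|\Lambda_T(P)\| \leq \frac{\|P\|}{1-\|T\|} = \frac{1}{1-\|T\|}.
\end{equation*}
There is no real obstacle here; the only thing to be careful about is the first identity, where the orthonormality of $\{e_i\}$ is essential to collapse $\sum_i \langle e_i, g_1 \rangle e_i$ into $Pg_1$. The power of the preceding lemma is precisely that it absorbs all the twisted-Fock-space combinatorics into the single estimate $\|\Lambda_T(A)\| \leq \|A\|/(1-\|T\|)$, so the present statement is a clean corollary.
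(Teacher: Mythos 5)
Your proof is correct and follows exactly the same route as the paper: identify $\sum_{i=1}^{m} a_T^*(e_i) a_T(e_i)$ with the twisted preservation operator $\Lambda_T(P)$, where $P$ is the orthogonal projection onto $\mathrm{span}\{e_1,\ldots,e_m\}$, and then invoke part c) of Lemma \ref{preservation operator}. The paper states this in one line; you have simply supplied the intermediate verification on $\HH^{\otimes n}$ and its transfer through $R_{T,n}$, which is accurate.
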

\begin{proof}
	We simply note that $ \sum_{i=1}^{m} a_T^*(e_i)a_T(e_i) = \Lambda_{T}(Q) $ with $ Q\in B(\HH) $ the orthogonal projection onto the subspace $ \text{span}\{e_1,\cdots,e_m\} $.
\end{proof}

The proof of the following theorem is mostly the same as in Theorem 2.3 \cite{Fum01}, we streamline the proof here for reader convenience.
\begin{thm}\label{Hiai noninj}
	Let $ H\subset \HH $ be a standard subspace (of finite or infinite dimension), $T$ be a compatible braided crossing symmetric twist with $ \|T\| <1$. If there is a constant $C\geq 1$ such that
	$$ \frac{\text{dim}\;E_{\Delta_H}([1,C])\HH}{C} >\frac{16}{(1-\|T\|)^2}, $$
	then $ \mathcal{L}_T(H) $ is not injective.
\end{thm}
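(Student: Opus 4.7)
I propose to follow Hiai's proof of Theorem 2.3 in \cite{Fum01} line by line, substituting Lemma \ref{Hiai Lem 2.1} for his Lemma 2.1. The $q$-Fock space enters Hiai's argument only through the single uniform bound on a preservation-type sum, which Lemma \ref{Hiai Lem 2.1} already generalizes to the $T$-twisted setting with $|q|$ replaced by $\|T\|$; every other ingredient is intrinsic to the modular structure of $H\subset\HH$, so each constant in Hiai's estimate transfers unchanged. Assume for contradiction that $\mathcal{L}_T(H)$ is injective.

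First, I would use the spectral theorem for $\Delta_H$ to pick an orthonormal family $e_1,\ldots,e_m \in E_{\Delta_H}([1,C])\HH$ with $m = \dim E_{\Delta_H}([1,C])\HH$; a standard discretization allows one to assume the $e_i$'s are eigenvectors of $\Delta_H$ with eigenvalues $\lambda_i \in [1,C]$. By Theorem \ref{modular data on frakA}, $\Delta_\Omega^{it}$ is the second quantization of $\Delta_H^{it}$, so $a_T^*(e_i)$ is a $\sigma^{\varphi_\Omega}$-eigenoperator with eigenvalue $\lambda_i$, and a suitable $\CC$-linear combination of $X_T(e_i)$ and $X_T(iJ_H e_i)$ realizes it as an element affiliated to $\mathcal{L}_T(H)$.

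Second, to apply a Haagerup--Pisier--Nou inequality to the tuple $(a_T^*(e_i))_{i\le m}$ I need to estimate both $\|\sum_i a_T^*(e_i)a_T(e_i)\|$ and $\|\sum_i a_T(e_i)a_T^*(e_i)\|$. The first is bounded by $(1-\|T\|)^{-1}$ directly by Lemma \ref{Hiai Lem 2.1}. For the second, consider the creation map $\Phi:\mathrm{span}(e_i)\otimes \FF_T(\HH)\to \FF_T(\HH)$, $e_i\otimes\xi\mapsto a_T^*(e_i)\xi$; then $\|\Phi^*\Phi\|=\|\Phi\Phi^*\|=\|\sum_i a_T^*(e_i)a_T(e_i)\|\le (1-\|T\|)^{-1}$. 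Each diagonal compression of the positive operator matrix $(\Phi^*\Phi)_{ij}=a_T(e_i)a_T^*(e_j)$ is bounded by $\|\Phi^*\Phi\|\cdot 1$, whence summing the diagonal entries yields
\begin{equation*}
\bigg\|\sum_{i=1}^m a_T(e_i)a_T^*(e_i)\bigg\|\ \le\ \frac{m}{1-\|T\|}.
\end{equation*}

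Third, invoke the nontracial Haagerup--Pisier--Nou inequality for $\sigma$-eigenoperators in an injective von Neumann algebra (Proposition 2.2 of \cite{Fum01}, building on \cite{nou2004non}): for $\sigma^{\varphi_\Omega}$-eigenoperators $x_1,\ldots,x_m$ with eigenvalues in $[1,C]$,
\begin{equation*}
\sum_{i=1}^m \|x_i\Omega\|_T^2\ \le\ 4\sqrt{C}\,\bigg\|\sum_i x_i x_i^*\bigg\|^{1/2}\,\bigg\|\sum_i x_i^* x_i\bigg\|^{1/2}.
\end{equation*}
Plugging $x_i=a_T^*(e_i)$ gives $m$ on the left, and the right-hand side is at most $4\sqrt{C}\cdot(1-\|T\|)^{-1/2}\cdot(m/(1-\|T\|))^{1/2}=4\sqrt{Cm}/(1-\|T\|)$. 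Squaring yields $m^2\le 16\,Cm\,(1-\|T\|)^{-2}$, i.e.\ $m/C\le 16/(1-\|T\|)^2$, contradicting the hypothesis. The only step requiring real care is importing the exact statement and constant in the Haagerup--Pisier--Nou inequality in the nontracial setting; the preservation-operator bookkeeping and the application of Lemma \ref{Hiai Lem 2.1} are then routine.
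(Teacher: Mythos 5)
The proposal takes a genuinely different route from the paper's proof, and it has several gaps that need attention.

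\textbf{Comparison of approaches.} The paper's proof (following Hiai closely) never introduces the creation operators $a_T^*(e_i)$ as standalone elements of $\mathcal{L}_T(H)$. Instead it builds an explicit element
\[
X=\sum_k X_T(f_k)\otimes J_\Omega X_T(f_k)J_\Omega + \sum_k\Bigl(\tfrac{1}{1+\lambda_k}X_T(e_k)\otimes J_\Omega X_T(e_k)J_\Omega + \tfrac{1}{1+\lambda_k^{-1}}X_T(J_He_k)\otimes J_\Omega X_T(J_He_k)J_\Omega\Bigr)
\]
in $\mathcal{L}_T(H)\otimes_{\min}\mathcal{L}_T(H)'$, uses $\langle\Omega,\tilde X\Omega\rangle\le\|\tilde X\|\le\|X\|_{\min}$ under the injectivity hypothesis, and then bounds $\|X\|_{\min}$ by decomposing each $X_T$ into creation and annihilation and applying Lemma~\ref{Hiai Lem 2.1} to the creation half. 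You instead feed $a_T^*(e_i)$ directly into a packaged ``Haagerup--Pisier--Nou'' inequality. Morally the two are equivalent, but your version carries extra burdens that the paper's version avoids.

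\textbf{Gap 1: membership of $a_T^*(e_i)$ in $\mathcal{L}_T(H)$.} Your stated justification is wrong: $X_T$ is $\CC$-linear (since $a_T\circ S_H$ is a composition of two antilinear maps), so $X_T(iJ_He_i)=iX_T(J_He_i)$, and no $\CC$-linear combination $\alpha X_T(e_i)+\beta X_T(J_He_i)=\alpha a_T^*(e_i)+\beta a_T^*(J_He_i)+\alpha\sqrt{\lambda_i}\,a_T(J_He_i)+\beta\lambda_i^{-1/2}a_T(e_i)$ can isolate $a_T^*(e_i)$ unless $\lambda_i$-dependent coefficients vanish identically, which they do not. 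The correct argument is Tomita-theoretic: $a_T^*(\xi)$ commutes with the right annihilation/creation operators, hence with $J_\Omega\mathcal{L}_T(H)J_\Omega=\mathcal{L}_T(H)'$, so $a_T^*(\xi)\in\mathcal{L}_T(H)$ (bounded since $\|T\|<1$). This is standard but must be said; the linear-combination claim as written is false.

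\textbf{Gap 2: the cited inequality.} You invoke a specific inequality $\sum_i\|x_i\Omega\|^2\le 4\sqrt{C}\,\|\sum x_ix_i^*\|^{1/2}\|\sum x_i^*x_i\|^{1/2}$ for $\sigma$-eigenoperators in an injective algebra and attribute it to Proposition 2.2 of Hiai's paper. The paper does not isolate such a black-box inequality and instead derives the needed estimate in-line from the $\|\cdot\|_{\min}$ bound on row/column sums together with the second-quantization description of $J_\Omega$ (Theorem~\ref{modular data on frakA}), which is what produces the factor $\frac{2}{\sqrt{\lambda_k}+\sqrt{\lambda_k^{-1}}}\ge\frac{1}{\sqrt C}$. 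You should either verify that the inequality you quote, with the exact constant $4\sqrt C$ for eigenvalues in $[1,C]$, is really what Hiai proves, or (cleaner) replace it with the explicit min-norm estimate the paper uses; as stated, the crux of your argument rests on an unverified citation.

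\textbf{Gap 3: the non--almost-periodic case.} When $\Delta_H$ has continuous spectrum in $[1,C]$ there are no eigenvectors at all, so $a_T^*(e_i)$ for $e_i\in E_{\Delta_H}(I_i)\HH$ are only approximate eigenoperators; an eigenoperator inequality does not apply verbatim. The phrase ``a standard discretization allows one to assume the $e_i$'s are eigenvectors'' does not fix this, because the eigenoperator hypothesis is being used structurally in your cited inequality, not just as a convenience. The paper's proof handles this explicitly in its case (b) by choosing orthonormal vectors in thin spectral windows $E_{\Delta_H}((a+\frac{b-a}{i+1},\,a+\frac{b-a}{i}])\HH$ and redoing the estimates; something of this form is also needed in your argument.

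The remaining steps are fine: the diagonal-compression bound $\|\sum_i a_T(e_i)a_T^*(e_i)\|\le m/(1-\|T\|)$ is correct, and the final arithmetic squares out to exactly $m/C\le 16/(1-\|T\|)^2$. But the three points above must be repaired before this is a proof.
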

\begin{proof}
	\begin{enumerate}[a)]
		\item We first assume that $ \Delta_H^{it} $ is almost periodic, i.e. eigenvectors of $ \Delta_H $ generate $\HH$. Let $ f_1,\cdots,f_s $ be an orthonormal family of the eigenspace of $ \Delta_H $ with eigenvalues $1$, and $e_1,\cdots,e_t$ be an orthonormal family of the eigenspace of $ \Delta_H $ with eigenvalues $ \lambda_1,\cdots,\lambda_t\in (1,C] $. By the assumption, we can pick $s,t$ large enough so that $ \frac{s+t}{C}> \frac{16}{{(1-\|T\|)^2} } $. If $\mathcal{L}_T(H)$ is injective, then we must have a homomorphism $ \mathcal{L}_T(H)\otimes_{\text{min}}\mathcal{L}_T(H)'\to C^*(\mathcal{L}_T(H),\mathcal{L}_T(H)')\subseteq B(\FF_T(\HH)) $ sending any $ X = \sum x_i\otimes y_i $ to $ \tilde{X} :=\sum x_iy_i\in C^*(\mathcal{L}_T(H),\mathcal{L}_T(H)') $ for $ x_i\in \mathcal{L}_T(H) $ and $ y_i\in \mathcal{L}_T(H)' $. Therefore, we have inequality $ \|\tilde{X}\| \leq \|X\|_{\text{min}} $. Consider
		\begin{align*}
			X &= \sum_{k=1}^{s} X_T(f_k)\otimes J_\Omega X_T(f_k)J_\Omega + \sum_{k=1}^{t}\left( \frac{1}{1+\lambda_k}X_T(e_k)\otimes J_\Omega X_T(e_k)J_\Omega+ \frac{1}{1+\lambda_k^{-1}}X_T(J_H e_k)\otimes J_\Omega X_T(J_H e_k)J_\Omega \right)\\
			\tilde{X} & = \sum_{k=1}^{s} X_T(f_k)J_\Omega X_T(f_k)J_\Omega + \sum_{k=1}^{t}\left( \frac{1}{1+\lambda_k}X_T(e_k)J_\Omega X_T(e_k)J_\Omega+  \frac{1}{1+\lambda_k^{-1}}X_T(J_H e_k)J_\Omega X_T(J_H e_k)J_\Omega \right)
		\end{align*}
		Then \begin{align*}
			\|\tilde{X}\| &\geq \langle \Omega, \tilde{X}\Omega\rangle = s+ \sum_{k=1}^{t}\left(\frac{1}{1+\lambda_k}\langle S_H e_k,J_He_k\rangle + \frac{1}{1+\lambda_k^{-1}}\langle S_H J_He_k,e_k\rangle\right)\\ &= s+ \sum_{k=1}^{t}\frac{2}{\sqrt{\lambda_k}+\sqrt{\lambda_k^{-1}}} \geq \frac{s+t}{\sqrt{C}}> \frac{4\sqrt{s+t}}{1-\|T\|}.
		\end{align*}
		On the other hand, $ X = X_1+X_2 $ with
		\begin{align*}
			X_1 &:= \sum_{k=1}^{s} a_T^*(f_k)\otimes J_\Omega X_T(f_k)J_\Omega + \sum_{k=1}^{t}\left( \frac{1}{1+\lambda_k}a_T^*(e_k)\otimes J_\Omega X_T(e_k)J_\Omega+ \frac{1}{1+\lambda_k^{-1}}a_T^*(J_H e_k)\otimes J_\Omega X_T(J_H e_k)J_\Omega \right).\\
			X_2 &:= \sum_{k=1}^{s} a_T(f_k)\otimes J_\Omega X_T(f_k)J_\Omega + \sum_{k=1}^{t}\left( \frac{1}{1+\lambda_k}a_T(e_k)\otimes J_\Omega X_T(e_k)J_\Omega+ \frac{1}{1+\lambda_k^{-1}}a_T(J_H e_k)\otimes J_\Omega X_T(J_H e_k)J_\Omega \right).
		\end{align*}
		Therefore by Lemma \ref{Hiai Lem 2.1} and the estimate $ \|X_T(\xi)\|\leq \|a_T(S_H\xi)\|+\|a^*_T(\xi)\|\leq (\|S_H\xi\|+\|\xi\|)/\sqrt{1-\|T\|} $, \begin{align*}
			\|X_1\|_{\text{min}}\leq& \| \sum_{k=1}^{s}a_T^*(f_k)a_T(f_k) + \sum_{k=1}^{t}\left( a_T^*(e_k)a_T(e_k)+a_T^*(J_He_k)a_T(J_H e_k) \right)\|^{1/2}\\ &\times \left( \sum_{k=1}^{s}\| X_T(f_k) \|^2+ \sum_{k=1}^{t}\left( \frac{1}{(1+\lambda_k)^2}\|X_T(e_k)\|^2+ \frac{1}{(1+\lambda_k^{-1})^2}\|X_T(J_H e_k)\|^2\right) \right)^{1/2}\\
			\leq & \frac{1}{1-\|T\|}\left( 4s+ \sum_{k=1}^{t}( \frac{(1+\lambda_k^{1/2})^{2}}{(1+\lambda_k)^2}+ \frac{(1+\lambda_k^{-1/2})^{2}}{(1+\lambda_k^{-1})^2} )\right)^{1/2} \\
			\leq & \frac{2\sqrt{s+t}}{1-\|T\|}.
		\end{align*}
	Similarly, we have $\displaystyle\|X_2\|_{\text{min}}\leq \frac{2\sqrt{s+t}}{1-\|T\|}$, hence $\displaystyle \|X\|_{\text{min}} \leq \frac{4\sqrt{s+t}}{1-\|T\|} $. Contradiction.
	\item If $ \Delta_H^{it} $ is not almost periodic, then the spectrum of $\Delta_H$ must contain an interval $ [a,b] $ with $ 1\leq a<b $. Choose a sequence of orthonormal vectors $ \{e_i\}_{i\geq 1} $ with $ e_i \in E_{\Delta_H}( ( a+\frac{b-a}{i+1}, a+\frac{b-a}{i}])\HH $ and let $$ X^{(N)} = \sum_{i=1}^{N} X_T(e_i)\otimes J_\Omega X_T(e_i)J_\Omega \in \mathcal{L}_T(H)\otimes_{\text{min}}\mathcal{L}_T(H)'.$$
	Then by similar computation as in (a), we have $ \|\widetilde{X^{(N)}}\|\geq \sum_{i=1}^{N}\langle S_H e_K, J_H e_k\rangle \geq Na^{1/2}, $ and
	$$ \|X^{(N)}\|_{\text{min}} \leq \frac{4}{1-\|T\|}N^{1/2}b^{1/2}. $$
	For $ N $ sufficiently large, we have $ \|X^{(N)}\|_{\text{min}} <  \|\widetilde{X^{(N)}}\| $ hence $ \mathcal{L}_T(H) $ is not injective.
	\end{enumerate}
\end{proof}

\begin{cor}
	Let $ T $ be a compatible braided crossing symmetric twist with $ \|T\|<1 $, then $ \mathcal{L}_T(H) $ is non-injective if one of the following conditions holds.
	\begin{enumerate}[a)]
		\item $ \Delta_H = \text{id}_{\HH} $ and $\text{dim}\geq 2$.
		\item There is a constant $C\geq 1$ such that $$ \frac{\text{dim}\;E_{\Delta_H}([1,C])\HH}{C} >\frac{16}{(1-\|T\|)^2}.$$
		\item $ 2\leq \text{dim}\; \HH<\infty $.
	\end{enumerate}
\end{cor}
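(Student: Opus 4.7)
\smallskip

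The plan is to deduce each of the three conditions from either Theorem \ref{Hiai noninj} or from the factoriality results of Corollary \ref{factoriality and type}, splitting the argument according to whether $\HH$ is finite or infinite dimensional.

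\smallskip

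For (b) there is nothing to do: this is exactly the hypothesis of Theorem \ref{Hiai noninj}. For (a), I would first handle the case $\dim\HH=\infty$: since $\Delta_H=\mathrm{id}$, we have $E_{\Delta_H}([1,C])\HH=\HH$ for every $C\geq 1$, so taking $C=1$ the quantity $\dim E_{\Delta_H}([1,1])\HH/1$ is infinite and hypothesis (b) is trivially satisfied, so Theorem \ref{Hiai noninj} applies. When $2\leq \dim\HH<\infty$ and $\Delta_H=\mathrm{id}$, the vacuum state $\varphi_\Omega$ is a trace, hence the centralizer $\mathcal{L}_T(H)^{\varphi_\Omega}$ equals $\mathcal{L}_T(H)$ itself; Corollary \ref{factoriality and type} then tells us that $\mathcal{L}_T(H)$ is a $\mathrm{II}_1$ factor without property $\Gamma$. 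Since the only injective $\mathrm{II}_1$ factor (with separable predual) is the hyperfinite one $R$, which does have property $\Gamma$, it follows that $\mathcal{L}_T(H)$ is non-injective.

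\smallskip

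For (c), the finite dimensionality of $\HH$ forces $\Delta_H$ to have finitely many eigenvalues, so the closed subgroup $G\subseteq \RR^\times_*$ they generate is either $\{1\}$ or of the form $\lambda^\ZZ$ for some $0<\lambda<1$ (the remaining possibility $G=\RR^\times_*$ being excluded by assumption). If $G=\{1\}$ we are reduced to case (a). If $G=\lambda^\ZZ$, Corollary \ref{factoriality and type} yields that $\mathcal{L}_T(H)$ is a \emph{full} factor of type $\mathrm{III}_\lambda$. Since the hyperfinite (and hence any injective) $\mathrm{III}_\lambda$ factor is not full in the sense of Connes (its centralizer is the hyperfinite $\mathrm{II}_\infty$ factor, which contains non-trivial central sequences), fullness implies non-injectivity, completing the argument.

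\smallskip

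The only delicate point in this plan is the invocation of the implication \emph{full $\implies$ non-injective} in the type $\mathrm{III}_\lambda$ setting; I would either cite Connes' original result identifying the injective $\mathrm{III}_\lambda$ factor with the Powers factor (which is known not to be full) or, equivalently, note that an injective factor always admits non-trivial central sequences in its Ocneanu ultrapower. All the other ingredients (Theorem \ref{Hiai noninj}, Corollary \ref{factoriality and type}, and the basic dichotomy for closed subgroups of $\RR^\times_*$) have already been established in the paper, so the proof is essentially an assembly argument.
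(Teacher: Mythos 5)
Your proof is correct, and for parts (b) and (c) it follows the same route as the paper: (b) is literally Theorem \ref{Hiai noninj}, and (c) is the observation that a full factor cannot be injective, with fullness supplied by Corollary \ref{factoriality and type} (you are also slightly more careful than the paper in dispatching the degenerate case $G=\{1\}$ by reducing it to (a), which the paper's terse proof glosses over).

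The genuine divergence is in part (a). The paper simply cites \cite{nou2004non} --- a reference written for the $q$-Gaussian case, so strictly speaking one must check that Nou's argument adapts to a general twist $T$. You instead derive (a) from ingredients already in the paper: for $\dim\HH=\infty$ you observe that $E_{\Delta_H}([1,1])\HH=\HH$ makes the hypothesis of (b) vacuously true at $C=1$, and for $2\leq\dim\HH<\infty$ you use that $\varphi_\Omega$ is a trace (so the centralizer is everything), invoke Corollary \ref{factoriality and type} to get a $\mathrm{II}_1$ factor without property $\Gamma$, and then use the standard fact that an injective $\mathrm{II}_1$ factor with separable predual is hyperfinite and hence has $\Gamma$. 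Both steps are sound, and this makes the corollary self-contained where the paper relies on an external citation that nominally treats only $T=qF$. The trade-off is that your version for finite-dimensional $\HH$ leans on the full strength of the conjugate-system machinery (via the no-$\Gamma$ conclusion), whereas a direct Nou-type estimate, if it does generalize, would be more elementary.
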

\begin{proof}
	a) is due to \cite{nou2004non}. b) is Theorem \ref{Hiai noninj}. For c), if $\Delta_H\neq \text{id} $, then by Corollary \ref{factoriality and type}, the centralizer of $ \mathcal{L}_T(H) $ does not have Gamma property, hence $ \mathcal{L}_T(H) $ is non-injective.
\end{proof}

\begin{rmk}
For the $ q $-Araki-Woods algebras (or more generally the $q_{ij}$-twists), those cases already (essentially) cover all  $H\subseteq \HH$ with $ \text{dim} \HH \geq 2 $ and $ -1<q<1 $ (\cite{KSW23}). This is because one can always reduce the general cases to one of the above situations by looking at an expected subalgebra generated by some eigenoperators $ X_{qF}(e_{i_1}),\cdots, X_{qF}(e_{i_s}) $. However, for a general twist $T$, such a proper expected subalgebra may not exist. For example, if $ T=cm^*m $ ($c\neq 0$) is the twist in Example \ref{twist on matrix} for $ \HH = L^2(B(\CC^\infty),\text{Tr}(h\cdot) ) $, then there are no proper $S_H$-invariant subspace $\KK$ of $ \HH = L^2(B(\CC^\infty),\text{Tr}(h\cdot) ) $ such that $ T(\KK\otimes \KK)\subseteq \KK\otimes \KK $ as we already observed in Example \ref{twist on matrix}. Therefore, the injectivity of $ \mathcal{L}_T( L^2(B(\CC^\infty),\text{Tr}(h\cdot) )_{s.a.} ) $ is unclear when we choose $ \min_{i,j}\{h_i/h_j\} $ large enough.
\end{rmk}

\section*{Acknowledgments}

The author would like to thank his Ph.D. advisor Prof. Michael Anshelevich for patiently reading the draft for this paper and helping improve the proofs about the linear orders of contractions.
\nocite{hietarinta1993solving,guionnet2014free,Dab14}
\bibliographystyle{alpha}
\bibliography{twistedGaussian}

\newcommand{\etalchar}[1]{$^{#1}$}
\begin{thebibliography}{BMRW22}

\bibitem[BKM{\etalchar{+}}21]{bikram2021neumann}
Panchugopal Bikram, Rahul Kumar, Rajeeb Mohanta, Kunal Mukherjee, and Diptesh
  Saha.
\newblock On the von {N}eumann algebras associated to {Y}ang--{B}axter
  operators.
\newblock {\em Proceedings of the Royal Society of Edinburgh Section A:
  Mathematics}, 151(4):1331--1354, 2021.

\bibitem[BM17]{bikram2017generator}
Panchugopal Bikram and Kunal Mukherjee.
\newblock Generator masas in q-deformed {A}raki--{W}oods von {N}eumann algebras
  and factoriality.
\newblock {\em Journal of Functional Analysis}, 273(4):1443--1478, 2017.

\bibitem[BMRW22]{bikram2022factoriality}
Panchugopal Bikram, Kunal Mukherjee, {\'E}ric Ricard, and Simeng Wang.
\newblock On the factoriality of q-deformed {A}raki-{W}oods von {N}eumann
  algebras.
\newblock {\em Communications in Mathematical Physics}, pages 1--25, 2022.

\bibitem[Bo{\.{z}}98]{Bo98}
Marek Bo{\.{z}}ejko.
\newblock Completely positive maps on {C}oxeter groups and the
  ultracontractivity of the {$q$}-{O}rnstein-{U}hlenbeck semigroup.
\newblock In {\em Quantum probability ({G}da\'{n}sk, 1997)}, volume~43 of {\em
  Banach Center Publ.}, pages 87--93. Polish Acad. Sci. Inst. Math., Warsaw,
  1998.

\bibitem[BS94]{BS94}
Marek Bo\.{z}ejko and Roland Speicher.
\newblock Completely positive maps on {C}oxeter groups, deformed commutation
  relations, and operator spaces.
\newblock {\em Math. Ann.}, 300(1):97--120, 1994.

\bibitem[Dab14]{Dab14}
Yoann Dabrowski.
\newblock A free stochastic partial differential equation.
\newblock {\em Ann. Inst. Henri Poincar\'{e} Probab. Stat.}, 50(4):1404--1455,
  2014.

\bibitem[dSL23]{dL22}
Ricardo~Correa da~Silva and Gandalf Lechner.
\newblock Modular structure and inclusions of twisted {A}raki-{W}oods algebras.
\newblock {\em Communications in Mathematical Physics}, 2023.

\bibitem[EP03]{effros2003feynman}
Edward~G Effros and Mihai Popa.
\newblock Feynman diagrams and {Wick} products associated with {q-Fock} space.
\newblock {\em Proceedings of the National Academy of Sciences},
  100(15):8629--8633, 2003.

\bibitem[GS14]{guionnet2014free}
A.~Guionnet and D.~Shlyakhtenko.
\newblock Free monotone transport.
\newblock {\em Invent. Math.}, 197(3):613--661, 2014.

\bibitem[Hia01]{Fum01}
Fumio Hiai.
\newblock q-deformed {A}raki-{W}oods algebras.
\newblock {\em Operator algebras and mathematical physics: conference
  proceedings: Constan{\`E} a (Romania)}, 2001.

\bibitem[Hie93]{hietarinta1993solving}
Jarmo Hietarinta.
\newblock Solving the two-dimensional constant quantum {Y}ang--{B}axter
  equation.
\newblock {\em Journal of mathematical physics}, 34(5):1725--1756, 1993.

\bibitem[JSW95]{JSW93}
P.~E.~T. Jorgensen, L.~M. Schmitt, and R.~F. Werner.
\newblock Positive representations of general commutation relations allowing
  {W}ick ordering.
\newblock {\em J. Funct. Anal.}, 134(1):33--99, 1995.

\bibitem[Kr{\'o}00]{Kr00}
Ilona Kr{\'o}lak.
\newblock Wick product for commutation relations connected with {Yang--Baxter}
  operators and new constructions of factors.
\newblock {\em Communications in Mathematical Physics}, 210(3):685--701, 2000.

\bibitem[Kr{\'{o}}05]{krolak2005contractivity}
Ilona Kr{\'{o}}lak.
\newblock Contractivity properties of {O}rnstein-{U}hlenbeck semigroup for
  general commutation relations.
\newblock {\em Mathematische Zeitschrift}, 250:915--937, 2005.

\bibitem[Kr{\'o}06]{Kr06}
Ilona Kr{\'o}lak.
\newblock Factoriality of von {N}eumann algebras connected with general
  commutation relations-finite dimensional case.
\newblock {\em Banach Center Publications}, 1(73):277--284, 2006.

\bibitem[KSW23]{KSW23}
Manish Kumar, Adam Skalski, and Mateusz Wasilewski.
\newblock Full solution of the factoriality question for $ q $-araki-woods von
  {N}eumann algebras via conjugate variables.
\newblock {\em arXiv preprint arXiv:2301.08619}, 2023.

\bibitem[Kum23]{kumar2023conjugate}
Manish Kumar.
\newblock Conjugate variables approach to mixed $ q $-araki woods algebras:
  Factoriality and non-injectivity.
\newblock {\em arXiv preprint arXiv:2304.11108}, 2023.

\bibitem[MS23]{MS23}
Akihiro Miyagawa and Roland Speicher.
\newblock A dual and conjugate system for {q-Gaussians} for all q.
\newblock {\em Advances in Mathematics}, 413:108834, 2023.

\bibitem[Nel15]{nelson2015free}
Brent Nelson.
\newblock Free monotone transport without a trace.
\newblock {\em Communications in Mathematical Physics}, 334(3):1245--1298,
  2015.

\bibitem[Nel17]{Ne17}
Brent Nelson.
\newblock On finite free {Fisher} information for eigenvectors of a modular
  operator.
\newblock {\em Journal of Functional Analysis}, 273(7):2292--2352, 2017.

\bibitem[Nou04]{nou2004non}
Alexandre Nou.
\newblock Non injectivity of the q-deformed von {N}eumann algebra.
\newblock {\em Mathematische Annalen}, 330:17--38, 2004.

\bibitem[Shl97]{shlyakhtenko1997free}
Dimitri Shlyakhtenko.
\newblock Free quasi-free states.
\newblock {\em Pacific Journal of Mathematics}, 177(2):329--368, 1997.

\bibitem[SW18]{skalski2018remarks}
Adam Skalski and Simeng Wang.
\newblock Remarks on factoriality and q-deformations.
\newblock {\em Proceedings of the American Mathematical Society},
  146(9):3813--3823, 2018.

\bibitem[Tak03]{Takesaki2}
M.~Takesaki.
\newblock {\em Theory of operator algebras. {II}}, volume 125 of {\em
  Encyclopaedia of Mathematical Sciences}.
\newblock Springer-Verlag, Berlin, 2003.
\newblock Operator Algebras and Non-commutative Geometry, 6.

\end{thebibliography}
\end{document}